\tikzset{elegant/.style={smooth,thick,samples=50,black}}
\tikzset{eaxis/.style={->,>=stealth}}
\crefname{equation}{}{}
\crefname{lem}{Lemma}{Lemmas}
\crefname{thm}{Theorem}{Theorems}
\crefname{assum}{Assumption}{Assumptions}
\newcommand{\proj}[0]{ {\bf proj}}
\newcommand{\conv}[1]{{\bf conv}\left\{ {#1} \right\}}
\newcommand{\dd}{\,{\rm d}}
\newcommand{\R}{\,{\mathbb R}}
\newcommand{\dual}[1]{\left\langle {#1} \right\rangle}
\newcommand{\argmin}[0]{ {\mathop{{\rm  argmin}}\,}}
\newcommand{\nm}[1]{\left\lVert {#1} \right\rVert}
\newcommand{\snm}[1]{\left\lvert {#1} \right\rvert}
\newcommand{\ssnm}[1]
{
	\left\vert\kern-0.25ex
	\left\vert\kern-0.25ex
	\left\vert
	{#1}
	\right\vert\kern-0.25ex
	\right\vert\kern-0.25ex
	\right\vert
}
\def\spher@harm#1{%
	\vbox{\hbox{%
			\offinterlineskip
			\valign{&\hb@xt@2\p@{\hss$##$\hss}\vskip.2ex\cr#1\crcr}%
		}\vskip-.36ex}%
}
\def\gshone{\spher@harm{.}}
\def\gshtwo{\spher@harm{.&.}}
\def\gshthree{\spher@harm{.&.&.}}
\let\gsh\spher@harm
\newtheorem{assum}{Assumption}
\newtheorem{lem}{Lemma}[section]
\newtheorem{rem}{Remark}[section]
\newtheorem{thm}{Theorem}[section]
\newcolumntype{I}{!{\vrule width 1,5pt}}
\newlength\savedwidth
\newlength\savewidth
\newcounter{mnote}
\let\oldmarginpar\marginpar
\renewcommand\marginpar[1]
\def\@captype{table}\makeatother
\begin{document}
	\title{\Large \bf An accelerated gradient method with adaptive restart for convex multiobjective optimization problems\thanks{This work was supported by the National Natural Science Foundation of China (Grant Nos. 12401402, 12431010, 11991024, 12171060), the Science and Technology Research Program of Chongqing Municipal Education Commission (Grant No. KJZD-K202300505), the Natural Science Foundation of Chongqing (Grant No. CSTB2024NSCQ-MSX0329) and the Foundation of Chongqing Normal University (Grant No. 202210000161).}}
	
	\author[,1,2]{Hao Luo\thanks{Email: luohao@cqnu.edu.cn}}
	
	\author[,1]{Liping Tang\thanks{Email: tanglipings@163.com}}
	
	\author[,1]{Xinmin Yang\thanks{Email: xmyang@cqnu.edu.cn}}		
	
	\affil[1]{National Center for Applied Mathematics in Chongqing, Chongqing Normal University, Chongqing, 401331, China}
	\affil[2]{Chongqing Research Institute of Big Data, Peking University,  Chongqing, 401121, China}

	\date{ }
	\maketitle
	
	\begin{abstract}
	In this work, based on the continuous time approach, we propose an accelerated gradient method with adaptive residual restart for convex multiobjective optimization problems. For the first, we derive rigorously the continuous limit of the multiobjective accelerated proximal gradient method by Tanabe et al. [Comput. Optim. Appl., 2023]. It is a second-order ordinary differential equation (ODE) that involves a special projection operator and can be viewed as an extension of the ODE by Su et al. [J. Mach. Learn. Res., 2016] for Nesterov acceleration. Then, we introduce a novel accelerated multiobjective gradient (AMG) flow with tailored time scaling that adapts automatically to the convex case and the strongly convex case, and the exponential decay rate of a merit function along with the solution trajectory of AMG flow is established via the Lyapunov analysis. After that, we consider an implicit-explicit time discretization and obtain an accelerated multiobjective gradient method with a convex quadratic programming subproblem. The fast sublinear rate and linear rate are proved respectively for convex and strongly convex problems. In addition, we present an efficient residual based adaptive restart technique to overcome the oscillation issue and improve the convergence significantly. Numerical results are provided to validate the practical performance  of the proposed method.
	\end{abstract}

		
		\section{Introduction}
		Multiobjective programming (MOP) arises in many practical applications, such as finance, economics and management science. Usually, it aims to minimize a vector valued function
		\begin{equation}\label{eq:min-mobj}
			\tag{MOP}
			\min_{x\in\R^n}\,F(x) =\left[f_1(x),\cdots,f_m(x)\right]^\top,
		\end{equation}
		where for $1\leq j\leq m,\,f_j:\R^n\to\R\cup\{+\infty\}$ is proper, closed and convex. Unlike the single objective case, \cref{eq:min-mobj} involves more than one objectives, but in most cases, one cannot minimize all objective functions simultaneously because there might be conflict among some of them. Therefore \cref{eq:min-mobj} has different concept of optimality, and this makes it more challenge than standard optimization problems for both theoretical analysis and numerical method. 
		
		Classical methods for solving \cref{eq:min-mobj} are mainly based on scalarization and heuristic approach. However, the former involves the fine-tuning on the weight parameter and it is not easy to approximate the Pareto front well. The latter is more likely to find the global Pareto front but it is rare to see theoretical convergence guarantee. Besides, neither of these two methods promise solution sequences with decreasing property, which is attractive for some applications \cite{Attouch2015c}. 
		\subsection{Multiobjective gradient methods}
		
		For single objective problems, the gradient descent moves along with the negative gradient, which stands for the steepest descent direction. Similar idea has been developed for multiobjective problems. In this case, $d$ is called a multiobjective descent direction at $x$ if $\dual{\nabla f_j(x),d}<0$ for all $1\leq j\leq m$.	In other words, $d$ is a common descent direction for all objectives. 
		The multiobjective  steepest descent direction at $x$ is defined by \cite{fliege_steepest_2000}
		\begin{equation}\label{eq:dx}
			d(x) =\argmin_{d\in\R^n}\left\{	\frac{1}{2}\nm{d}^2+	\max_{1\leq j\leq m}\dual{\nabla f_j(x),d}\right\},
		\end{equation}
		which can be understood equivalently as \cite{Attouch2015c}
		\[	d(x)/\nm{d(x)}=	 \argmin_{d\in\R^n}\left\{		\max_{1\leq j\leq m}\dual{\nabla f_j(x),d}:\,\nm{d}=1\right\}.
		\]  
		
	Based on the designing of descent directions, some multiobjective gradient methods are proposed. The steepest descent method \cite{fliege_steepest_2000,GranaDrummond2005} adopts the steepest descent direction \cref{eq:dx}, and there are extensions to the constrained case (projected gradient method) \cite{Drummond2004} and the nonsmooth case (subgradient and proximal gradient methods) \cite{Bonnel2005,DaCruzNeto2013,Miettinen1995,tanabeProximalGradientMethods2019}. Due to the slow convergence observed in multiobjective steepest descent methods, Chen et al. \cite{Chen2023e} incorporated the Barzilai-Borwein rule into the direction-finding subproblem and improved the theoretical analysis and practical performance. Concerning with a closed and bounded convex constraint under general partial orders, Chen et al. \cite{Chen2023g} developed a conditional gradient method based on the oriented distance function and established the convergence result. A generalization to the unbounded case can be found in Chen et al. \cite{chen_convergence_2024}.
	By using the tool of merit function \cite{Tanabe2024}, Tanabe et al. \cite{tanabe_convergence_2023} established the convergence rate $\min\{L/k,(1-\mu/L)^k\}$ for the multiobjective proximal gradient method \cite{tanabeProximalGradientMethods2019}, where $\mu = \min_{1\leq j\leq m}\mu_j$ and $L=\max_{1\leq j\leq m}L_j$ (cf.\cref{assum:Lj-muj}). However, as mentioned in \cite{Chen2023e}, both the theoretical linear rate and the practical performance can be dramatically worse even if each objective is well conditioned (i.e., $L_j/\mu_j\approx1$).  To conquer this issue, Chen et al. \cite{chen_barzilai-borwein_2023-1} proposed two types of Barzilai-Borwein proximal gradient methods and obtained the improved linear rate $(1-\kappa)^k$ with $\kappa:=\min_{1\leq j\leq m}\mu_j/L_j$. For more related extensions, we refer to \cite{Assuncao2021,chen_convergence_2022,chen_barzilai-borwein_2023,chen_descent_2024,chen_subspace_24,chen_conjugate_2024} and the references therein.
		
		More recently, the acceleration technique has been applied to multiobjective gradient methods. Moudden and Mouatasim \cite{ElMoudden2020} presented an accelerated diagonal steepest descent method, and under the assumption that the dual variable with respect to the descent direction leaves unchanged, they proved the fast sublinear rate $L/k^2$ for smooth convex objectives. Tanabe et al. \cite{Tanabe2023a} proposed a multiobjective accelerated proximal gradient (APG) method for general composite objectives and established the same rate of convergence without such technical assumption in \cite{ElMoudden2020}. With careful chosen extrapolation parameter, Zhang and Yang \cite{zhang_convergence_2023} showed that APG converges with an improved rate $o(1/k^2)$. However, it seems unclear how to extend existing works to strongly convex problems with faster linear rate $(1-\sqrt{\mu/L})^k$.

		There are also multiobjective Newton methods \cite{chen_convergence_2023,Fliege2009} and Quasi-Newton methods \cite{Ansary2015,Povalej2014} that use second-order (Hessian) information to construct the descent directions. Theoretically we have (locally) superlinear or even quadratic convergence, but, the subproblem is not easy to solve and this will affect the overall performance. Instead, the above mentioned gradient-based methods involve a projection subproblem (cf.\cref{eq:dx}) and can be transferred to a convex Quadratic Programming (QP) in terms of the dual variable, which is usually low dimension and can be solved very efficiently.
		
		\subsection{Multiobjective gradient flow}
		The gradient flow approach presents a continuous time perspective of first-order gradient-based methods, and has attracted many attentions for single objective optimization problems \cite{Attouch2018,Attouch2019c,chen_first_2019,chen_unified_2021,Luo2023,Luo2024ax,luo_differential_2021,luo_unified_2025,Polyak1964,Shi2022,su_dierential_2016}. For \cref{eq:min-mobj}, the idea of dynamical system can be dated back to  \cite{Cornet1983,Henry1973,Smale1973}. It looks for some continuous trajectory that flows along with the negative descent direction. In view of \cref{eq:dx}, the steepest descent direction $d(x)$ is nothing but the negative of the projection of the zero vector onto the convex hull $C(x) = \conv{\nabla f_j(x):1\leq j\leq m}$. For convenience, we write $d(x) = -\proj_{C(x)}(0) $. This leads to the multiobjective gradient flow
		\begin{equation}\label{eq:MG}
			X'+ \proj_{C(X)}(0) = 0,
		\end{equation}
		which has been studied by Miglierina \cite{Miglierina2004}, Garrigos \cite{Garrigos2015} and Attouch et al. \cite{Attouch2015c,Attouch2014a}. The ODE \cref{eq:MG} actually corresponds to the continuous time limit of many multiobjective gradient methods, such as the steepest descent \cite{fliege_steepest_2000,GranaDrummond2005} and the proximal gradient  \cite{Bonnel2005,DaCruzNeto2013,Miettinen1995,tanabeProximalGradientMethods2019}. For solution existence and decreasing property of \cref{eq:MG} under nonsmooth setting, we refer to \cite{Attouch2015c}.
		
		In \cite{attouch_multiibjective_2015}, Attouch and Garrigos proposed an inertial multiobjective gradient (IMOG) system
		\begin{equation}\label{eq:imog}
			X''+\beta X'+\proj_{C(X)}(0)=0,\quad\beta>0,
		\end{equation}
		which is a direct generalization of the heavy-ball model \cite{Polyak1964}. However, to prove the decreasing property and convergence result, they assumed that the damping coefficient is bounded below $\beta >\sqrt{L}$. Unfortunately, this excludes the asymptotically vanishing damping $\beta =3/t$ \cite{Attouch2018,Attouch2019c,su_dierential_2016}. Recently, Sonntag and Peitz \cite{Sonntag2024a,Sonntag2024} introduced the multiobjective inertial gradient-like dynamical system with asymptotic vanishing damping (MAVD):
		\begin{equation}\label{eq:mimog}
			X''+\beta X''+\proj_{C(X)}(-X'')=0,
		\end{equation}
		where $\beta>0$ or $\beta = \alpha/t$ with $\alpha>0$. Compared with \cref{eq:imog}, they used the projection of $-X''$ onto the convex hull $C(x)$, which is a perturbation of the steepest descent direction $d(x)$ since $X''$ approaches to zero as the trajectory converges to the equilibrium state. More interestingly, the MAVD model \cref{eq:mimog} has an equivalent gradient-like presentation
		\[
		\beta X'+\proj_{C(X)+{X''}}(0)=0.
		\]  
		Sonntag and Peitz \cite{Sonntag2024a,Sonntag2024} established the decreasing property and convergence result, and also proved the convergence rate $1/t^2$ via the Lyapunov analysis. Based on proper numerical discretization of \cref{eq:mimog}, an accelerated multiobjective gradient method with the sublinear rate $L/k^2$ has been proposed in \cite{Sonntag2024}.
		
Here we mention that, existing models (eg.\cref{eq:imog,eq:mimog}) consider only convex problems. This motivates us to developing new continuous models and discrete methods with fast (linear) convergence for strongly convex case. Besides, the understanding of the MAVD system \cref{eq:mimog} deserves further investigation. As discussed in \cite[Section 6.5]{Sonntag2024}, the numerical discretization of MAVD has tight connection with APG \cite{Tanabe2023a}. But there is still lack of a rigorous proof for the continuous time limit of APG.
		\subsection{Main contributions}
		The main contributions of this work are summarized as below.
		\begin{itemize}
			\item Firstly, we derive the continuous time limit of APG \cite{Tanabe2023a}, which is the following ODE
			\begin{equation*} 
				\left\{
				\begin{aligned} 
					{}&\theta(t)X'(t) = Z(t)-X(t),\\
					{}& Z'(t) = -\theta(t)\proj_{C(X(t))}\left(\frac{3X'(t)-2Z'(t)}{2\theta(t)}\right),
				\end{aligned}
				\right.
			\end{equation*}
			where $\theta(t)=t/2+\theta_0$ with $\theta_0>0$. This is also equivalent to 
			\begin{equation*} 
				X''(t) + \frac{3}{t+2\theta_0}X'(t) + \proj_{C(X(t))}(-X''(t)) = 0.
			\end{equation*}
			We see that the above system is a special case of MAVD \cref{eq:mimog}. This agrees with the discussion in \cite[Section 6.5]{Sonntag2024} for the discrete case.
			\item After that, we propose a novel accelerated multiobjective gradient (AMG) flow with tailored time rescaling parameter
			\begin{equation}\label{eq:amg-intro}
				\gamma	X''+(\mu+\gamma)X'+\proj_{C(X)}(-\gamma X'') = 0,
			\end{equation}
			where  $\gamma$ satisfies $\gamma'=\mu-\gamma$ with $\gamma(0)=\gamma_0>0$. Recall that $\mu=  \min_{1\leq j\leq m}\mu_j\geq0$ is the smallest strong convexity constants (cf.\cref{assum:Lj-muj}). By using the Lyapunov analysis, we shall establish the exponential convergence rate $e^{-t}$ of a merit function, which measures the Pareto optimality with respect to \cref{eq:min-mobj}.
			\item Then we consider an implicit-explicit numerical scheme for our AMG flow \cref{eq:amg-intro} and obtain an accelerated multiobjective gradient method with the fast rate $\min\{L/k^2,\,(1-\sqrt{\mu/L})^k\}$ under \cref{assum:Lj-muj}. That is, for the convex case $\mu=0$, we have the same sublinear rate $L/k^2$ as that in \cite{Sonntag2024a,Sonntag2024,Tanabe2023a}, but for $\mu>0$, ours possesses linear convergence. Besides, our method involves a QP subproblem that appears in many existing methods \cite{fliege_steepest_2000,Sonntag2024a,Sonntag2024,tanabeProximalGradientMethods2019,Tanabe2023a}.
			\item Lastly, to overcome the oscillation phenomenon, we propose a novel residual based adaptive restart technique, which improves the practical performance of our method significantly. This also avoids using the global strong convexity parameter but maintain the fast linear convergence.
		\end{itemize}
		\subsection{Organization}
		The rest of this paper is organized as follows. In \cref{sec:pre} we prepare some essential preliminaries. Then in \cref{sec:dis2ode}, we shall derive the continuous time model the APG method \cite{Tanabe2023a}. After that, in \cref{sec:amg}, we propose our new model with tailored time scaling and present a Lyapunov analysis. An implicit-explicit numerical scheme with provable accelerated rate will be considered in \cref{sec:imex-qp}, and in \cref{sec:back-res} we present two variants with backtracking and adaptive restart to estimate the local Lipschitz constant and overcome the issue of oscillation. Some numerical results are reported in \cref{sec:num} and finally, we give some concluding remarks in \cref{sec:conclu}.

		\section{Preliminary}
		\label{sec:pre}

		\subsection{Pareto optimality}
		For $p,\,q\in\R^m$, we say $p<q$ if $p_j<q_j$ for all $1\leq j\leq m$. Likewise, the relation $p\leq q$ can be defined as well. A vector $y\in\R^n$ is {\it dominated} by $x\in\R^n$ with respect to \cref{eq:min-mobj} if $F(x)\leq F(y)$ and $ F(x)\neq F(y)$.
		Alternatively, we say $x$ dominates $y$ when $y$ is dominated by $x$. 
		
		A point $x^*\in\R^n$ is called {\it  weakly Pareto optimal} or a {\it weakly Pareto optimal solution}  or a {\it weak Pareto point} of \cref{eq:min-mobj} if there has no $y\in\R^n$ that $F(y)<F(x^*)$. The collection of all weakly Pareto optimal solutions is called the {\it weak Pareto set}, i.e.,
		\[
		\mathcal P_w := \{x^*\in \R^n:\, x^*\text{ is a weak Pareto point of \cref{eq:min-mobj} }\}.
		\]
		The image $F(\mathcal P_w)$ of the weak Pareto set $\mathcal P_w$ is called the {\it weak Pareto front}.
		
		A point $x^*\in\R^n$ is called {\it Pareto optimal} or a {\it Pareto optimal solution} or a {\it Pareto point} of \cref{eq:min-mobj} if there has no $y\in\R^n$ dominating $x^*$. The set of all Pareto optimal points is called the {\it Pareto set}:
		\[
		\mathcal P := \{x^*\in \R^n:\, x^*\text{ is a Pareto point of \cref{eq:min-mobj} }\}.
		\]
		The image $F(\mathcal P)$ of the Pareto set $\mathcal P$ is the {\it Pareto front}.
		\subsection{Functional class}
		Given any real-valued functional $f:\R^n\to\R$, its level set is defined by $\mathcal L_f(\alpha):=\{x\in\R^n:\,f(x)\leq \alpha\}$ for all $\alpha\in\R$. For a vector-valued function $F = \left[f_1,\cdots,f_m\right]^\top :\R^n\to\R^m$, denote by $	DF(x) =[\nabla f_1(x),\cdots,\nabla f_m(x)]$ the (transposed) Jacobian. When no confusion arises, we also define the level set of $F$ by $\mathcal L_F(p) = \{x\in\R^n:\,F(x)\leq p\}$ for any $p\in\R^n$.
		
		As usual, the bracket $\dual{\cdot,\cdot}$ denotes the inner product of vectors. Let $\mathcal F_{L}^1(\R^n)$ be the set of all $C^1$ functions with $L$-Lipschitzian gradients over $\R^n$ and $\mathcal S_{\mu}^1(\R^n)$ the set of all $C^1$ functions with convexity parameter $\mu\geq0$. The intersection of these two is $\mathcal S_{\mu,L}^{1,1}(\R^n)=\mathcal F_{L}^1(\R^n)\cap \mathcal S_{\mu}^1(\R^n)$. 
		\begin{lem}\label{lem:gd-lem}
			Assume $f\in\mathcal S_{\mu}^{1}(\R^n)$ and $(x,y,M)\in\R^n\times\R^n\times\R_+$ satisfies
			\begin{equation}\label{eq:cond-M}
				-	\dual{\nabla f(y),x-y}\leq f(y)-f(x)+\frac{M}{2}\nm{y-x}^2,
			\end{equation}
			then it holds that
			\[
			-	\dual{\nabla f(y),x-z}\leq f(z)-f(x)-\frac{\mu}{2}\nm{y-z}^2+\frac{M}{2}\nm{y-x}^2\quad\forall\,z\in\R^n.
			\]
		\end{lem}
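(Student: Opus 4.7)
The plan is to combine two bounds on values of $f$: the hypothesis \cref{eq:cond-M} which relates $f(x)$ and $f(y)$, and the strong convexity inequality which relates $f(z)$ and $f(y)$. Adding them should produce exactly the target inequality after using linearity of the inner product to telescope $\dual{\nabla f(y), x-y} + \dual{\nabla f(y), y-z} = \dual{\nabla f(y), x-z}$.

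More concretely, first I would invoke the definition of $\mathcal S_\mu^1(\R^n)$ at the point $y$ evaluated at $z$, which yields
\[
f(z) \geq f(y) + \dual{\nabla f(y), z-y} + \frac{\mu}{2}\nm{z-y}^2,
\]
or equivalently
\[
-\dual{\nabla f(y), y-z} \leq f(z) - f(y) - \frac{\mu}{2}\nm{z-y}^2.
\]
Next I would add this to the assumed inequality \cref{eq:cond-M}, namely $-\dual{\nabla f(y), x-y} \leq f(y) - f(x) + \frac{M}{2}\nm{y-x}^2$. The $f(y)$ terms cancel on the right-hand side, and on the left-hand side the two inner products combine via $(x-y)+(y-z) = x-z$, producing
\[
-\dual{\nabla f(y), x-z} \leq f(z) - f(x) - \frac{\mu}{2}\nm{y-z}^2 + \frac{M}{2}\nm{y-x}^2,
\]
which is the claim. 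Since $z \in \R^n$ was arbitrary, the conclusion holds universally.

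There is no real obstacle: the result is a two-line manipulation once one recognizes the correct strong convexity inequality to pair with the hypothesis. The only point worth checking is that the hypothesis does not require $f(x)$ and $f(y)$ to be related by a usual descent-lemma inequality (in which case $M$ would need to be $\geq L$); here \cref{eq:cond-M} is posed as an abstract assumption on the triple $(x,y,M)$, so the proof works for any $M \geq 0$ satisfying it, and the sign conventions in the added inequalities match up cleanly.
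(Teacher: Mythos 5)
Your proof is correct and is exactly the paper's argument: the paper likewise adds the strong convexity inequality $-\dual{\nabla f(y),y-z}\leq f(z)-f(y)-\frac{\mu}{2}\nm{y-z}^2$ to \cref{eq:cond-M}, and the inner products telescope to give the claim. Your write-up simply spells out the same two-line combination in more detail.
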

		\begin{proof}
			Combining \cref{eq:cond-M} with $-	\dual{\nabla f(y),y-z}\leq f(z)-f(y)-\frac{\mu}{2}\nm{y-z}^2$ proves the lemma.
		\end{proof}
		\subsection{Projection operator}
		Let $\Delta_m :=\{\lambda=\left[\lambda_1,\cdots,\lambda_m\right]^\top\in\R_+^m:\,\sum_{j=1}^{m}\lambda_j=1\}$ be the unit simplex. Given a set of vectors $\{p_1,\cdots,p_m\} \subset\R^n$, its convex hull is defined by
		\[
		\conv{p_1,\cdots,p_m} := \left\{\sum_{j=1}^{m}\lambda_jp_j:\,\lambda=\left[\lambda_1,\cdots,\lambda_m\right]^\top\in\Delta_m\right\}.
		\]
		For any nonempty  closed convex subset $C\subset\R^n$ and $x\in\R^n$, the distance from $x$ to $C$ is $	d(x,C): = \min_{y\in C}\nm{y-x}$. The minimizer exists uniquely and is denoted as
		\[
		\proj_{C}(x): =\mathop{ \argmin}\limits_{y\in C}\frac{1}{2}\nm{y-x}^2.
		\]
		It is not hard to verify the translation decomposition
		\begin{equation}\label{eq:proj-transla}
			\proj_{x+C}(0) = x+ \proj_{C}(-x)\quad\forall\,x\in\R^n.
		\end{equation}
		Moreover, according to \cite[Example 4.1]{attouch_quantitative_1993}, $	\proj_{C}(\cdot)$ is H\"{o}lder continuous with respect to the subset $C$. Namely, given two closed convex subsets $C_1$ and $C_2$, we have 
		\begin{equation}\label{eq:holder-projC}
			\nm{	\proj_{C_1}(x)-	\proj_{C_2}(x)}^2\leq \rho \max\left\{\sup_{p\in C_1}d(p,C_2),\,\sup_{q\in C_2}d(q,C_1)\right\},
		\end{equation}
	 for any $x\in\R^n$ and $\rho\geq\nm{x}+d(x,C_1)+d(x,C_2)$. Since $d(x,C)\leq \nm{x}+d(0,C)$, we can also take $\rho\geq 3\nm{x}+d(0,C_1)+d(0,C_2)$.

		\subsection{First-order necessary condition}
		The optimality condition of \cref{eq:min-mobj} reads as
		\begin{equation}\label{eq:1st-opt-cond-conv}
			0\in \conv{\nabla f_1(x^*),\cdots,\nabla f_m(x^*)},
		\end{equation}
		which is also called the Karush--Kuhn--Tucker (KKT) condition. Namely, there exists some dual variable $\lambda^*\in\Delta_m$ such that $DF(x^*)\lambda^*=\sum_{j=1}^m\lambda^*_j\nabla f_j(x^*) = 0$.
		For simplicity, we set $C(x) := \conv{\nabla f_j(x):1\leq j\leq m}$ for all $x\in\R^n$. Then the optimality condition \cref{eq:1st-opt-cond-conv} is equivalent to
		\begin{equation}\label{eq:1st-opt-cond-proj}
			\proj_{C(x^*)}(0) = 0.
		\end{equation}
		If $x^*$ satisfies the KKT condition \cref{eq:1st-opt-cond-conv} or \cref{eq:1st-opt-cond-proj}, then we call it {\it Pareto critical}. The collection of all Pareto critical points is called the {\it Pareto critical set}:
		\[
		\mathcal P_c := \{x^*\in \R^n:\, x^*\text{ is a Pareto critical point of \cref{eq:min-mobj} }\}.
		\]
		
		By \cite[Lemma 1.3]{Attouch2015c}, we know that $\mathcal P_w\subset\mathcal P_c$ and by convexity the converse holds true, i.e., $\mathcal P_w=\mathcal P_c$ for convex problems. Hence, analogously to the single objective case ($m=1$), the Pareto criticality is a necessary condition for weak Pareto optimality, and in the convex setting, the Pareto criticality is also sufficient for optimality. Moreover, if each $f_j$ is strictly convex, then we have $\mathcal P=\mathcal P_c$. 
		\subsection{Merit function}
		A merit function associated with \cref{eq:min-mobj} is a nonnegative function that attains zero only at weakly Pareto optimal solutions. An overview on merit functions for multiobjective optimization is given in \cite{Tanabe2024}. In this paper, we consider the merit function
		\begin{equation}\label{eq:merit}
			u_0(x) := \sup_{z\in\R^n}\left\{f(x;z):=\min_{1\leq j\leq m} \left[f_j(x)-f_j(z)\right]\right\}.
		\end{equation}
		
		The following result has been prove in \cite[Theorem 3.1]{Tanabe2024}, which motivates the usage of $u_0(x)$ as a measure of complexity for multiobjective optimization methods. 
		\begin{thm}[\cite{Tanabe2024}]
			The function $u_0:\R^n\to\R\cup\{+\infty\}$ defined by \cref{eq:merit} is nonnegative and lower semicontinuous. Moreover, $x^*\in\mathcal P_w$ if and only if $u_0(x^*) = 0$.  
		\end{thm}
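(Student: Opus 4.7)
The plan is to establish the three claims separately, each by a short direct argument, with the equivalence characterization being the only part that requires any thought.

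First I would check nonnegativity by the trivial choice $z=x$ in the supremum defining $u_0$: this gives $f(x;x) = \min_{j}[f_j(x)-f_j(x)] = 0$, so $u_0(x) \geq 0$ for every $x \in \R^n$. The value $+\infty$ is admissible since the definition takes values in $\R \cup \{+\infty\}$.

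Next, for lower semicontinuity, I would argue that $u_0$ is the pointwise supremum over $z \in \R^n$ of the functions $x \mapsto \min_{1\le j \le m}[f_j(x)-f_j(z)]$. For each fixed $z$, each $f_j$ is closed (lsc) and convex by the standing hypothesis on \cref{eq:min-mobj}, so $x \mapsto f_j(x)-f_j(z)$ is lsc, and the pointwise minimum of finitely many lsc functions is lsc. Since the pointwise supremum of an arbitrary family of lsc functions is lsc, $u_0$ is lsc on $\R^n$.

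For the characterization $x^* \in \mathcal P_w \iff u_0(x^*)=0$, I would prove the contrapositive in both directions. If $u_0(x^*) > 0$ (possibly $+\infty$), there exists $z \in \R^n$ with $\min_{1 \le j \le m}[f_j(x^*)-f_j(z)] > 0$, hence $f_j(z) < f_j(x^*)$ for every $j$, i.e., $F(z) < F(x^*)$, which means $x^* \notin \mathcal P_w$. Conversely, if $x^* \notin \mathcal P_w$, there exists $y \in \R^n$ with $F(y) < F(x^*)$; taking $z = y$ gives $\min_{1 \le j \le m}[f_j(x^*) - f_j(y)] > 0$, so $u_0(x^*) > 0$. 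Combined with the already proven nonnegativity, this yields the equivalence.

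The main obstacle is essentially notational: one only needs to recognize the supremum/infimum preservation of (lower) semicontinuity and the compatibility between the definition of weak Pareto optimality (no strict dominator exists) and the strict positivity of $\min_j[f_j(x^*)-f_j(z)]$. No convexity is needed for the characterization itself, only closedness of the $f_j$ for the lsc property.
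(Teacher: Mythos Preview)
Your argument is correct. The paper itself does not supply a proof of this theorem; it simply cites it as \cite[Theorem~3.1]{Tanabe2024} and moves on. So there is no ``paper's own proof'' to compare against here---your three-part argument (nonnegativity via $z=x$, lower semicontinuity via the epigraph identity $\operatorname{epi}(\min_j g_j)=\bigcup_j \operatorname{epi}(g_j)$ together with stability of lsc under pointwise suprema, and the contrapositive characterization of weak Pareto optimality) is a clean, self-contained justification that the paper does not attempt to reproduce.

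One minor remark: your lsc step is stated for general proper closed convex $f_j$, which is how the paper introduces them, but you should be slightly careful when some $f_j(z)=+\infty$, since then $f_j(x)-f_j(z)$ is not a priori well defined. This is harmless in the paper's actual setting (\cref{assum:Lj-muj} makes every $f_j$ finite-valued and $C^1$), and in the general case one can simply restrict the supremum to $z\in\bigcap_j\dom f_j$ without changing $u_0$; but it is worth a sentence if you want the argument to stand at full generality.
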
 
		
		The merit function $u_0$ defined by \cref{eq:merit} involves the supermum of $f(x;z)$ over the whole space. This is not convenient for later convergence rate analysis. Fortunately, we can restrict it to the bounded level set, under proper assumptions.
		\begin{assum}\label{assum:Lj-muj}
			For $1\leq j\leq m,\,f_j\in\mathcal F_{\mu_j,L_j}^{1,1}(\R^n)$ with $0\leq \mu_j\leq L_j<+\infty$. For simplicity, set $\mu := \min_{1\leq j\leq m}\mu_j$ and $L:=\max_{1\leq j\leq m}L_j$.
		\end{assum}
		\begin{assum}\label{assum:j0}
			There exists $1\leq j_0\leq m$ such that the level set $\mathcal L_{f_{j_0}}(\alpha)=\{x\in\R^n:\,f_{j_0}(x)\leq \alpha\}$ is bounded for all $\alpha\in\R$. In other words, the quantity is finite
			\[
			R_{j_0}(\alpha): = \sup_{x\in \mathcal L_{f_{j_0}}(\alpha)}\nm{x}<+\infty
			\quad\forall\,\alpha\in\R.
			\]
		\end{assum}
		\begin{assum}\label{assum:alpha-pw}
			Let $\alpha\in\R^n$ be such that the level set $\mathcal L_F(\alpha)$ is nonempty. For all  $x\in\mathcal L_F(\alpha)$, there exists $x^*\in P_w\cap \mathcal L_F(F(x))$.
		\end{assum}
		\begin{lem}
			\label{lem:u0-}
			Let $\alpha\in\R^n$ be such that the level set $\mathcal L_F(\alpha)$ is nonempty. Then we have the following.
			\begin{itemize}
				\item[(i)] Under \cref{assum:j0}, we have 	 $
				S(\alpha):=\sup_{F^*\in F(P_w\cap \mathcal L_F(\alpha))}\inf_{z\in F^{-1}(F^*)}\nm{z}<+\infty
				$.
				\item[(ii)] Under \cref{assum:alpha-pw}, for all  $x\in\mathcal L_F(\alpha)$, we have 
				\[
				u_0(x)=
				\sup_{F^*\in F(P_w\cap \mathcal L_F(\alpha))}\inf_{z\in F^{-1}(F^*)}f(x;z).
				\]	
			\end{itemize}
		\end{lem}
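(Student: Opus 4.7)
Part (i) I would handle directly. Given any $F^*\in F(P_w\cap\mathcal L_F(\alpha))$, write $F^*=F(y)$ for some $y\in P_w$ with $F(y)\leq\alpha$; in particular $f_{j_0}(y)\leq\alpha_{j_0}$. For every $z\in F^{-1}(F^*)$ one has $f_{j_0}(z)=F^*_{j_0}=f_{j_0}(y)\leq\alpha_{j_0}$, so $z\in\mathcal L_{f_{j_0}}(\alpha_{j_0})$, and Assumption~\ref{assum:j0} gives $\nm{z}\leq R_{j_0}(\alpha_{j_0})$. Taking the infimum over $z$ and then the supremum over $F^*$ yields $S(\alpha)\leq R_{j_0}(\alpha_{j_0})<+\infty$.

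For part (ii), the first observation is that on any fiber $F^{-1}(F^*)$ the map $z\mapsto f(x;z)=\min_{j}[f_j(x)-f_j(z)]$ is constant with value $\min_{j}[f_j(x)-F^*_j]$, so the right-hand side of the claimed identity coincides with $\sup_{y\in P_w\cap\mathcal L_F(\alpha)}\min_{j}[f_j(x)-f_j(y)]$. Since $F(P_w\cap\mathcal L_F(\alpha))\subset F(\R^n)$, the inequality $u_0(x)\geq \sup_{F^*\in F(P_w\cap\mathcal L_F(\alpha))}\inf_{z\in F^{-1}(F^*)}f(x;z)$ is immediate from the definition of $u_0$.

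The heart of the argument is the reverse inequality, for which I plan to replace an arbitrary $z\in\R^n$ with a weak Pareto point $x^*\in P_w\cap\mathcal L_F(\alpha)$ satisfying $\min_{j}[f_j(x)-f_j(x^*)]\geq\min_{j}[f_j(x)-f_j(z)]$, via a two-case split. If $\min_{j}[f_j(x)-f_j(z)]\leq 0$, I apply Assumption~\ref{assum:alpha-pw} to $x\in\mathcal L_F(\alpha)$ to obtain $x^*\in P_w\cap\mathcal L_F(F(x))\subset P_w\cap\mathcal L_F(\alpha)$; then $F(x^*)\leq F(x)$ forces $\min_{j}[f_j(x)-f_j(x^*)]\geq 0$, which already dominates the chosen value. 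If instead $\min_{j}[f_j(x)-f_j(z)]>0$, then $f_j(z)<f_j(x)$ for every $j$, so $F(z)<F(x)\leq\alpha$ and in particular $z\in\mathcal L_F(\alpha)$; applying Assumption~\ref{assum:alpha-pw} to $z$ now yields $x^*\in P_w\cap\mathcal L_F(F(z))\subset P_w\cap\mathcal L_F(\alpha)$ with $F(x^*)\leq F(z)$, from which $f_j(x)-f_j(x^*)\geq f_j(x)-f_j(z)$ for every $j$ and the desired minimum inequality follows.

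The only subtlety is ensuring the correct Pareto point is produced in each branch so that it simultaneously lies in $\mathcal L_F(\alpha)$ and dominates the relevant objective values; this is exactly the role of Assumption~\ref{assum:alpha-pw}, and I expect no further obstacle beyond the careful bookkeeping of the two cases.
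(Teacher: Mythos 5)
Your proposal is correct and follows essentially the same route as the paper: part (i) bounds the fiber infimum through the bounded level set of $f_{j_0}$, and part (ii) uses the fiber-constancy of $f(x;\cdot)$ together with Assumption~\ref{assum:alpha-pw} to dominate each $z$ by a weak Pareto point in $P_w\cap\mathcal L_F(\alpha)$. Your two-case split is just a repackaging of the paper's intermediate identity $u_0(x)=\sup_{z\in\mathcal L_F(\alpha)}f(x;z)$, so there is no substantive difference.
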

		\begin{proof}
			Consider any $F^*\in F(P_w\cap \mathcal L_F(\alpha))$ with $z^*\in P_w\cap \mathcal L_F(\alpha)$ such that $F^*=F(z^*)$. It follows that $	\inf_{z\in F^{-1}(F^*)}\nm{z}\leq \nm{z^*}$.
			Since $z^*\in  \mathcal L_F(\alpha)$, by \cref{assum:j0}, we conclude that $z^*\in\mathcal L_{f_{j_0}}(\alpha)$, and thus 
			\[
			\inf_{z\in F^{-1}(F^*)}\nm{z}	\leq \sup_{z\in\mathcal L_{f_{j_0}}(\alpha)}\nm{z}=R_{j_0}(\alpha)<+\infty.
			\]
			Hence, we obtain $
			S(\alpha)\leq  R_{j_0}( \alpha)<+\infty$.
			This proves the first claim $(i)$.
			
			Then, let us check the second one $(ii)$. The proof can be found in \cite[Theorem 5.2]{tanabe_convergence_2023}. For the sake of completeness, we provide more details. By the definition \cref{eq:merit}, it is clear that $u_0(x)\geq 
			\sup_{F^*\in F(P_w\cap \mathcal L_F(\alpha))}\inf_{z\in F^{-1}(F^*)}f(x;z)$. Let us establish the reverse estimate. It is not hard to obtain
			\[
			u_0(x) = \sup_{z\in \mathcal L_F(F(x))}f(x;z)= \sup_{z\in \mathcal L_F(\alpha)}f(x;z).
			\]
			By \cref{assum:alpha-pw}, for all $z\in\mathcal L_F(\alpha)$, there is $x^*\in P_w\cap\mathcal L_F(F(z))$, and thus $f(x;z)\leq f(x;x^*)$, which gives that 
			\[
			\begin{aligned}
				u_0(x)\leq {}&\sup_{x^*\in P_w\cap\mathcal L_F(F(z))}f(x;x^*)\leq \sup_{x^*\in P_w\cap\mathcal L_F(\alpha)}f(x;x^*)\\
				={}&\sup_{F^*\in F(P_w\cap \mathcal L_F(\alpha))}\min_{1\leq j\leq m}[f_j(x)-f_j^*]\\
				={}&\sup_{F^*\in F(P_w\cap \mathcal L_F(\alpha))}\inf_{z\in F^{-1}(F^*)}\min_{1\leq j\leq m}[f_j(x)-f_j(z)]\\
				={}&\sup_{F^*\in F(P_w\cap \mathcal L_F(\alpha))}\inf_{z\in F^{-1}(F^*)}f(x;z).		
			\end{aligned}
			\]
			Consequently, we get the identity $u_0(x)	=\sup_{F^*\in F(P_w\cap \mathcal L_F(\alpha))}\inf_{z\in F^{-1}(F^*)}f(x;z)$ and completes the proof of this lemma.
		\end{proof}

		\section{Deriving the Continuous Model}
		\label{sec:dis2ode}
		In this section, we shall drive the continuous time limit of the multiobjective accelerated proximal gradient (APG) method proposed by Tanabe et al. \cite[Algorthm 2]{Tanabe2023a}, which is a generalization of FISTA and works for the composite objective $f_j = h_j+g_j$, where $h_j$ is smooth and $g_j$ is nonsmooth. For simplicity, we focus on the smooth setting $f_j\in\mathcal F_{L_j}^1(\R^n)$ for $1\leq j\leq m$. In this case, APG reads as follows: given $x_0=y_0\in\R^n$ and $\theta_0>0$, compute
		\begin{equation}\label{eq:apg-mop}
			\tag{APG}
			\left\{
			\begin{aligned}
				{}&	\theta_{k+1}^{-1}=\sqrt{\theta_k^{-2}+1 / 4}+1 / 2,\\
				{}&	x_{k+1} =  p_{\tau}^{{acc}}\left(x_{k}, y_k\right)=\mathop{\argmin}\limits_{z\in\R^n} \varphi_{\tau}^{acc}(z; x_{k}, y_k),\\
				{}&		y_{k+1} =x_{k+1}+ \theta_{k+1}\left(\theta_k^{-1}-1\right) \left(x_{k+1}-x_{k}\right),
			\end{aligned}
			\right.	
		\end{equation}
		where $\tau\in(0,1/L]$ and 
		\[
		\varphi_{\tau}^{acc}(z; x_k,y_k) : = \max _{1\leq j\leq m}\!\left\{\left[\left\langle\nabla f_j(y_k), z-y_k\right\rangle+f_j(y_k)-f_j(x_k)\right]+\frac{1}{2\tau}\|z-y_k\|^2\right\}.
		\]
		
		According to \cite[Theorem 5.1]{Tanabe2023a}, under \cref{assum:j0}, both $\{x_k\}_{k=0}^\infty$ and $\{y_k\}_{k=0}^\infty$ are bounded:
		\begin{equation}\label{eq:bd-xk-yk}
			\nm{x_k}+\nm{y_k}\leq C_1<+\infty\quad\forall\,k\geq0.
		\end{equation}
		With this property, we have the following main result. 
		\begin{thm}\label{thm:ode-apg-mop}
			Assume $f_j\in\mathcal F_{L_j}^1(\R^n)$ for $1\leq j\leq m$ and \cref{assum:j0} holds true, then the continuous-time limit of \cref{eq:apg-mop} is the following ODE
			\begin{equation}\label{eq:ode-apg-mop}
				\left\{
				\begin{aligned}
					{}&[\theta^2(t)]'=\theta(t),\\
					{}&\theta(t)X'(t) = Z(t)-X(t),\\
					{}& Z'(t) = -\theta(t)\proj_{C(X(t))}\left(\frac{3X'(t)-2Z'(t)}{2\theta(t)}\right),
				\end{aligned}
				\right.
			\end{equation}
			with the initial conditions $\theta(0)=\theta_0>0,\,X(0) = x_0\in\R^n$ and $Z(0) = z_0\in\R^n$,
			which is equivalent to \begin{equation}\label{eq:2ndode-apg-mop}
				X''(t) + \frac{3}{t+2\theta_0}X'(t) + \proj_{C(X(t))}(-X''(t)) = 0.
			\end{equation}
		\end{thm}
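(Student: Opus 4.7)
My plan is to pass to the formal continuous-time limit of \cref{eq:apg-mop} through the parabolic rescaling $h := \sqrt{\tau}$ and $t_k := kh$, making the ansatz $X(t_k) = x_k$, $Z(t_k) = z_k$ for smooth limiting profiles; the boundedness estimate \cref{eq:bd-xk-yk} under \cref{assum:j0} justifies the Taylor expansions below. For the scalar parameter I set $\eta_k := 1/\theta_k$, so that the first line of \cref{eq:apg-mop} is equivalent (after squaring $\eta_{k+1} - 1/2 = \sqrt{\eta_k^2 + 1/4}$) to $\eta_{k+1}^2 - \eta_k^2 = \eta_{k+1}$. With the identification $\eta_k = \theta(t_k)/h$ and a renaming of the initial value so that $\theta(0) = \theta_0 > 0$, this discrete recursion divided by $h$ converges to $[\theta^2(t)]' = \theta(t)$, whose solution is $\theta(t) = t/2 + \theta_0$.

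Next, I would introduce the auxiliary variable $z_k := x_k + \eta_k(x_{k+1} - x_k)$, so that by construction $z_k - x_k = \theta(t_k)(x_{k+1} - x_k)/h$; letting $h \to 0$ gives the first ODE $\theta(t) X'(t) = Z(t) - X(t)$. The crucial algebraic step is the identity
\[
z_{k+1} - z_k = -\tau \eta_{k+1} q_{k+1}, \qquad q_{k+1} := (y_{k+1} - x_{k+2})/\tau \in C(y_{k+1}),
\]
which I would derive by expanding $z_{k+1} - x_{k+1} = \eta_{k+1}(x_{k+2} - x_{k+1})$, decomposing $x_{k+2} - x_{k+1} = -\tau q_{k+1} + (y_{k+1} - x_{k+1})$, and using the momentum identity $y_{k+1} - x_{k+1} = \theta_{k+1}(\eta_k - 1)(x_{k+1} - x_k) = (1 - \theta_k)(z_k - x_k)$ together with $x_{k+1} - z_k = -(1-\theta_k)(z_k - x_k)$ to make the momentum contributions cancel. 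Dividing by $h$ and using $\tau\eta_{k+1}/h = \theta(t_{k+1})$ gives $(z_{k+1} - z_k)/h = -\theta(t_{k+1}) q_{k+1}$, which converges to $Z'(t) = -\theta(t) Q(t)$ for the limit $Q(t) \in C(X(t))$ of $q_{k+1}$.

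The main obstacle is identifying $Q(t)$ with $\proj_{C(X(t))}((3X'-2Z')/(2\theta))$. By the Fenchel dual of the APG subproblem, $q_{k+1}$ solves (up to $O(h^2)$ corrections stemming from the linearization $f_j(y_{k+1}) - f_j(x_{k+1}) \approx \langle \nabla f_j(y_{k+1}), y_{k+1} - x_{k+1}\rangle$) the penalised problem $q_{k+1} = \proj_{C(y_{k+1})}((y_{k+1}-x_{k+1})/\tau)$. Using the asymptotic $\theta_{k+1}(\eta_k - 1) = 1 - 3h/(t+2\theta_0) + O(h^2)$ together with Taylor expansions of $x_{k+1} - x_k$, a two-scale analysis isolates the bounded effective component of the projection argument (the singular $X'/h$ piece selects the face of $C(X)$ on which the optimum lies, while the bounded correction selects a unique point within it). Combining the Hölder continuity of $\proj$ in its set argument \cref{eq:holder-projC} with continuity of $y\mapsto C(y)$ on bounded sets then produces $Q(t) = \proj_{C(X)}((3X' - 2Z')/(2\theta))$. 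This step is the most delicate, since the projection argument is formally singular and one must simultaneously track the singular and regular parts of it.

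Finally, the equivalence between \cref{eq:ode-apg-mop} and \cref{eq:2ndode-apg-mop} is a direct verification: differentiating $\theta X' = Z - X$ and using $\theta' = 1/2$ gives $Z' = 3X'/2 + \theta X''$, whence $(3X' - 2Z')/(2\theta) = -X''$; substituting into the third equation of \cref{eq:ode-apg-mop} produces $3X'/2 + \theta X'' = -\theta \proj_{C(X)}(-X'')$, and division by $\theta = (t + 2\theta_0)/2$ yields \cref{eq:2ndode-apg-mop}.
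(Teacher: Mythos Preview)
Your overall architecture (parabolic scaling $h=\sqrt{\tau}$, auxiliary variable $z_k$, the algebraic identity $z_{k+1}-z_k=-\tau\eta_{k+1}q_{k+1}$, and the final equivalence check via $Z'=3X'/2+\theta X''$) matches the paper, and your derivation of the $\theta$--equation and of $\theta X'=Z-X$ is fine. The gap is in the identification of $Q(t)$, and it is a real one.

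First, the ``$O(h^2)$ correction'' you invoke is the size of the perturbation $E_{k+1}=F(x_{k+1})-F(y_{k+1})-[DF(y_{k+1})]^{\top}(x_{k+1}-y_{k+1})$ in the \emph{objective} of the dual subproblem. What you need is control on the induced perturbation of the \emph{minimiser}. The argument behind \cref{lem:est-D-D} shows that
\[
\big\|q_{k+1}-\proj_{C(y_{k+1})}\!\big((y_{k+1}-x_{k+1})/\tau\big)\big\|
\;\le\;\sqrt{2\|E_{k+1}\|_\infty/\tau}
\;=\;\sqrt{O(h^2)/h^2}\;=\;O(1),
\]
not $O(h^2)$. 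So replacing $q_{k+1}$ by the projection does not in itself give a vanishing error.

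Second, and more seriously, the projection argument $(y_{k+1}-x_{k+1})/\tau$ is singular: expanding as you indicate gives $X'/h + \big(X''/2 - 3X'/(2\theta)\big)+O(h)$. Your heuristic ``the $X'/h$ piece selects the face, the bounded correction selects a point within it'' does not produce $\proj_{C(X)}(-X'')$; the bounded part you isolate is $X''/2-3X'/(2\theta)$, not $-X''$, and face--selection followed by projection of that vector is simply a different operation. There is no way to pass to the limit of $\proj_{C}(X'/h+c)$ directly and obtain the required answer.

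The paper closes this gap not by a two--scale limit but by an algebraic rewriting \emph{before} letting $h\to0$. Since $q_{k+1}$ appears on both sides of the relation (through $z_{k+1}-z_k$ and through the projection argument, once one substitutes $y_{k+1}-x_{k+1}$ in terms of $x$- and $z$-increments), one is in the situation $a x^\# = u - \proj_C(w-bx^\#)$ covered by \cref{thm:sub-prob-case1}. That result converts the implicit projection with diverging argument into an explicit projection whose argument is the \emph{bounded} combination $\tfrac{1}{\theta(t_k)}\big(\tfrac32\frac{x_{k+1}-x_k}{h}-\frac{z_{k+1}-z_k}{h}\big)+O(h)$, and only then does the limit $h\to0$ give $(3X'-2Z')/(2\theta)$. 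This implicit--to--explicit transformation is the missing ingredient in your proposal; the H\"older continuity \cref{eq:holder-projC} you cite handles the change of \emph{set} $C(y_{k+1})\to C(X(t))$ (that is \cref{lem:est-proj-proj}), but not the singular change of the projection \emph{argument}.
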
 
		As we can see, in the continuous level, the limit ODE of \cref{eq:apg-mop} is still a generalization of the asymptotically vanishing damping model for NAG and FISTA derived by Su et al. \cite{su_dierential_2016}. We also notice that the second-order model \cref{eq:2ndode-apg-mop} has been proposed by Sonntag and Peitz \cite{Sonntag2024a,Sonntag2024}. In the sequel, to prove \cref{thm:ode-apg-mop}, we provide some auxiliary estimates in \cref{sec:aux-pf-ode} and then complete the detailed proof in \cref{sec:pf-ode}.
		\subsection{Auxiliary estimates}
		\label{sec:aux-pf-ode}
		We change the minimax order to get the dual problem
		\[
		\begin{aligned}
			{}&	\mathop{\min}\limits_{z\in\R^n} \varphi_{\tau}^{acc}(z; x_k,y_k)\\
			=	{}&\mathop{\min}\limits_{z\in\R^n} \max _{1\leq j\leq m}\left\{\left[\left\langle\nabla f_j(y_k), z-y_k\right\rangle+f_j(y_k)-f_j(x_k)\right]+\frac1{2\tau}\|z-y_k\|^2\right\}\\
			={}&\max _{\lambda\in\Delta_m}\mathop{\min}\limits_{z\in\R^n} \left\{\dual{\lambda, [DF(y_k)]^\top (z-y_k) +F(y_k)-F(x_k)} +\frac{1}{2\tau }\|z-y_k\|^2\right\}.	
		\end{aligned}
		\]
		Given $\lambda\in\Delta_m$, the minimization problem admits a unique solution $z(\lambda)=y_k-\tau DF(y_k)\lambda$, and it follows that
		\[
		\begin{aligned}
			\mathop{\min}\limits_{z\in\R^n} \varphi_{\tau}^{acc}(z; x_k,y_k)	
			={}&\max _{\lambda\in\Delta_m}\left\{\dual{\lambda,F(y_k)-F(x_k)}-\frac\tau{2}\nm{DF(y_k)\lambda}^2\right\}.		
		\end{aligned}
		\]
		In particular, we have 
		\begin{equation}\label{eq:pacc}
			p_{\tau}^{{acc}}\left(x_k,y_k\right) = y_k-\tau DF(y_k)\lambda_{\tau}(x_k,y_k),
		\end{equation}
		where 
		\begin{equation}\label{eq:argmin-lambda}
			\lambda_{\tau}(x_k,y_k)\in{} \mathop{\argmin} _{\lambda\in\Delta_m}\left\{\dual{\lambda,F(x_k)-F(y_k)}+\frac\tau{2}\nm{DF(y_k)\lambda}^2\right\}. 
		\end{equation}
		
		If we ignore the linear term $\dual{\lambda,F(x_k)-F(y_k)}$ in \cref{eq:argmin-lambda}, then it is a standard quadratic projection problem. In what follows, we establish some auxiliary estimates, which tell us how far is this problem away from the standard projection.
		\begin{lem}\label{lem:est-D-D}
			Assume $f_j\in\mathcal F_{L_j}^1(\R^n)$ for $1\leq j\leq m$. We have 
			\[
			\nm{DF(y_k)\lambda_{\tau}(x_k,y_k) - \proj_{C(y_k)}\left(\frac{y_k-x_k}{\tau}\right)}\leq \sqrt{\frac{2L}{ \tau }}\nm{x_k-y_k}.
			\]
		\end{lem}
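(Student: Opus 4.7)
The plan is to recast both $DF(y_k)\lambda_\tau(x_k,y_k)$ and $\proj_{C(y_k)}((y_k-x_k)/\tau)$ as images under $DF(y_k)$ of minimizers over $\Delta_m$ of two quadratic-plus-linear functionals that share the same quadratic part, and then to control the gap between the two minimizers via the first-order Taylor remainders of the $f_j$.

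First, I would set $\lambda_1:=\lambda_\tau(x_k,y_k)$, the minimizer in \cref{eq:argmin-lambda} of
\[
\phi_1(\lambda):=\tfrac{\tau}{2}\nm{DF(y_k)\lambda}^2+\dual{\lambda,F(x_k)-F(y_k)},
\]
and introduce the companion functional
\[
\phi_2(\lambda):=\tfrac{\tau}{2}\nm{DF(y_k)\lambda}^2+\dual{\lambda,[DF(y_k)]^\top(x_k-y_k)}.
\]
Completing the square shows that $\min_{\lambda\in\Delta_m}\phi_2(\lambda)$ is equivalent to $\min_{u\in C(y_k)}\nm{u-(y_k-x_k)/\tau}^2$, so any minimizer $\lambda_2\in\Delta_m$ of $\phi_2$ satisfies $u_2:=DF(y_k)\lambda_2=\proj_{C(y_k)}((y_k-x_k)/\tau)$. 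Writing $u_1:=DF(y_k)\lambda_1$, the target reduces to bounding $\nm{u_1-u_2}$.

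Since $\phi_1$ and $\phi_2$ have the same quadratic part, their difference is the linear form
\[
\phi_1(\lambda)-\phi_2(\lambda)=\dual{\lambda,r},\quad r_j:=f_j(x_k)-f_j(y_k)-\dual{\nabla f_j(y_k),x_k-y_k},
\]
and the descent lemma for $f_j\in\mathcal F_{L_j}^1(\R^n)$ yields $|r_j|\le (L_j/2)\nm{x_k-y_k}^2\le (L/2)\nm{x_k-y_k}^2$. I would then combine two ingredients: optimality of $\lambda_1$ gives $\phi_1(\lambda_1)\le\phi_1(\lambda_2)$, while the obtuse-angle inequality for the Euclidean projection of $(y_k-x_k)/\tau$ onto $C(y_k)$ upgrades the minimality of $\lambda_2$ for $\phi_2$ to the quadratic growth $\phi_2(\lambda_1)-\phi_2(\lambda_2)\ge (\tau/2)\nm{u_1-u_2}^2$. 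Subtracting the first from the second yields
\[
\tfrac{\tau}{2}\nm{u_1-u_2}^2\le (\phi_1-\phi_2)(\lambda_2)-(\phi_1-\phi_2)(\lambda_1)=\dual{\lambda_2-\lambda_1,r}.
\]
Finally I would use $\nm{\lambda_2-\lambda_1}_1\le 2$ (both lie in the simplex) and H\"older's inequality to conclude $\dual{\lambda_2-\lambda_1,r}\le 2\nm{r}_\infty\le L\nm{x_k-y_k}^2$, and rearrange to obtain the claim.

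The conceptual point to watch for is that $\phi_2$ is not in general strongly convex in $\lambda$, since the Gram matrix $DF(y_k)^\top DF(y_k)$ can be singular on the affine hull of $\Delta_m$; the quadratic growth used above lives not in the dual variable $\lambda$ but in the image vector $u=DF(y_k)\lambda\in C(y_k)$, which is precisely what the projection inequality provides. Once this is recognized, the smoothness-based estimate on $r$ and the $\ell^1$-bound on the simplex close the argument routinely.
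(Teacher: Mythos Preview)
Your proposal is correct and follows essentially the same route as the paper. The paper writes the projection problem as $\phi(\lambda)=\tfrac12\nm{DF(y_k)\lambda-(y_k-x_k)/\tau}^2$ and views $\lambda_\tau$ as a minimizer of $\phi+\tfrac{1}{\tau}\dual{\cdot,E_k}$ with the same Taylor remainder $E_k=r$; it then uses the quadratic identity $\phi(\lambda_\tau)-\phi(\widehat\lambda)-\dual{\nabla\phi(\widehat\lambda),\lambda_\tau-\widehat\lambda}=\tfrac12\nm{DF(y_k)(\lambda_\tau-\widehat\lambda)}^2$ together with the first-order optimality of $\widehat\lambda$, which is exactly your obtuse-angle quadratic-growth step in the image variable, and closes with the same $\ell^1$--$\ell^\infty$ H\"older bound $\nm{\lambda_1-\lambda_2}_1\le 2$ and $\nm{r}_\infty\le \tfrac{L}{2}\nm{x_k-y_k}^2$.
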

		\begin{proof}
			Define $\widehat{\lambda}_\tau(x_k,y_k)\in\Delta_m$ by that
			\[
			\begin{aligned}
				\widehat{\lambda}_\tau(x_k,y_k)\in{}&\mathop{\argmin} _{\lambda\in\Delta_m}\,\phi(\lambda):=\frac1{2}\nm{DF(y_k)\lambda-\frac{y_k-x_k}{\tau }}^2,
			\end{aligned}
			\]
			which implies $\proj_{C(y_k)}\left((y_k-x_k)/\tau\right) =DF(y_k)\widehat{\lambda}_\tau(x_k,y_k) $. In view of \cref{eq:argmin-lambda}, we have
			\[
			\begin{aligned}
				\lambda_{\tau}(x_k,y_k)\in{} &\mathop{\argmin} _{\lambda\in\Delta_m}\left\{  \phi(\lambda)+\frac{ 1}{\tau}\dual{\lambda,E_k} \right\}.
			\end{aligned}
			\]
			where $E_k:= F(x_k)-F(y_k)-[DF(y_k)]^\top (x_k-y_k) $ satisfies that $\nm{E_k}_{\infty}\leq L/2\nm{x_k-y_k}^2$. Using the optimality conditions of $\lambda_{\tau}(x_k,y_k)$ and $\widehat{\lambda}_\tau(x_k,y_k)$ implies that
			\[
			\begin{aligned}
				\phi(\lambda_{\tau}(x_k,y_k))-\phi(\lambda)\leq \frac{1}{\tau}\dual{\lambda-\lambda_{\tau}(x_k,y_k),E_k}	{}&,\quad\forall\,\lambda\in\Delta_m,\\
				\dual{\nabla \phi(\widehat{\lambda}_\tau(x_k,y_k)), \lambda-\widehat{\lambda}_\tau(x_k,y_k)}\geq	 0{}&,\quad\forall\,\lambda\in\Delta_m.
			\end{aligned}
			\]
			This yields the following estimate
			\[
			\begin{aligned}
				{}&\frac{1}{2}\nm{DF(y_k)\lambda_{\tau}(x_k,y_k) - \proj_{C(y_k)}\left(\frac{y_k-x_k}{\tau}\right)}^2\\
				= {}&	\frac{1}{2}\nm{DF(y_k)(\lambda_{\tau}(x_k,y_k)-\widehat{\lambda}_\tau(x_k,y_k))}^2 \\={}& \phi(\lambda_{\tau}(x_k,y_k)) -  \phi(\widehat{\lambda}_\tau(x_k,y_k)) -\dual{\nabla \phi(\widehat{\lambda}_\tau(x_k,y_k)),{\lambda}_\tau(x_k,y_k)-\widehat{\lambda}_\tau(x_k,y_k)}\\
				\leq{}&\frac{1}\tau\dual{E_k,\widehat{\lambda}_\tau(x_k,y_k)-{\lambda}_\tau(x_k,y_k)}\leq \frac{1}\tau\nm{E_k}_{\infty}\nm{\widehat{\lambda}_\tau(x_k,y_k)-{\lambda}_\tau(x_k,y_k)}_{1}\\
				\leq{}&\frac{2}\tau\nm{E_k}_{\infty}\leq \frac{L}{\tau}\nm{x_k-y_k}^2.
			\end{aligned}
			\]
			This completes the proof of this lemma.
		\end{proof}
		\begin{lem}\label{lem:est-proj-proj}
			Assume $f_j\in\mathcal F_{L_j}^1(\R^n)$ for $1\leq j\leq m$ and \cref{assum:j0} holds true, then
			\begin{equation}\label{eq:est-3.2}
				\begin{aligned}
					{}&	\nm{\proj_{C(y_k)}\left(\frac{y_k-x_k}{\tau}\right)-\proj_{C(x_k)}\left(\frac{y_k-x_k}{\tau}\right)}\\
					\leq{}&   \sqrt{C_2L}\nm{x_k-y_k}^{1/2}+\sqrt{\frac{3L}{\tau}}\nm{x_k-y_k},
				\end{aligned}
			\end{equation}
			where $C_2:=LC_1+2\max_{1\leq j\leq m}\nm{\nabla f_j(0)} $ and $C_1>0$ is the constant in \cref{eq:bd-xk-yk}.
		\end{lem}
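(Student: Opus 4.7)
The plan is to invoke the H\"older continuity estimate \eqref{eq:holder-projC} with the choice $C_1=C(y_k)$, $C_2=C(x_k)$, and $x=(y_k-x_k)/\tau$, and then carefully bound each quantity on the right-hand side using \cref{assum:Lj-muj} together with the uniform bound \eqref{eq:bd-xk-yk}. This naturally produces a square-root of a sum of two terms, which after $\sqrt{a+b}\le\sqrt a+\sqrt b$ yields exactly the two summands appearing in \eqref{eq:est-3.2}.

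First I would estimate the Hausdorff-type quantity $\max\{\sup_{p\in C(y_k)}d(p,C(x_k)),\sup_{q\in C(x_k)}d(q,C(y_k))\}$. Any $p\in C(y_k)$ has the form $p=\sum_j\lambda_j\nabla f_j(y_k)$ with $\lambda\in\Delta_m$, and the point $\sum_j\lambda_j\nabla f_j(x_k)$ lies in $C(x_k)$; the $L_j$-Lipschitz continuity of $\nabla f_j$ combined with $\lambda\in\Delta_m$ gives $d(p,C(x_k))\le L\nm{x_k-y_k}$, and symmetrically for the other supremum. Hence the maximum in \eqref{eq:holder-projC} is at most $L\nm{x_k-y_k}$.

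Next I would select the constant $\rho$ allowed in \eqref{eq:holder-projC}. Using the sharper choice $\rho\ge 3\nm{x}+d(0,C(y_k))+d(0,C(x_k))$, I would bound $d(0,C(y_k))\le\nm{\nabla f_{j}(y_k)}\le L\nm{y_k}+\nm{\nabla f_j(0)}$ for a suitably chosen index and likewise for $d(0,C(x_k))$; adding the two estimates and invoking the crucial joint bound $\nm{x_k}+\nm{y_k}\le C_1$ from \eqref{eq:bd-xk-yk} yields $d(0,C(y_k))+d(0,C(x_k))\le LC_1+2\max_j\nm{\nabla f_j(0)}=C_2$. Combined with $3\nm{x}=3\nm{y_k-x_k}/\tau$, this gives the admissible choice $\rho=C_2+3\nm{y_k-x_k}/\tau$.

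Plugging these into \eqref{eq:holder-projC} produces
\[
\nm{\proj_{C(y_k)}(v)-\proj_{C(x_k)}(v)}^2 \le \Bigl(C_2+\tfrac{3\nm{y_k-x_k}}{\tau}\Bigr)L\nm{x_k-y_k} = C_2 L\nm{x_k-y_k}+\tfrac{3L}{\tau}\nm{x_k-y_k}^2,
\]
with $v=(y_k-x_k)/\tau$. Taking square roots and using $\sqrt{a+b}\le\sqrt a+\sqrt b$ for $a,b\ge 0$ delivers exactly \eqref{eq:est-3.2}. The only real subtlety I anticipate is step three: one must resist the temptation to bound $d(0,C(y_k))$ and $d(0,C(x_k))$ separately in a way that would introduce an extra $LC_1$ term, and instead exploit that \eqref{eq:bd-xk-yk} bounds the \emph{sum} $\nm{x_k}+\nm{y_k}$, which is precisely what makes the definition of $C_2$ in the lemma statement sufficient.
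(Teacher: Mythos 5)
Your proposal is correct and follows essentially the same route as the paper: apply the H\"older continuity bound \cref{eq:holder-projC} with $x=(y_k-x_k)/\tau$, bound the Hausdorff-type quantity by $L\nm{x_k-y_k}$ via the Lipschitz gradients, bound $\rho$ by $C_2+3\nm{y_k-x_k}/\tau$ using \cref{eq:bd-xk-yk}, and finish with $\sqrt{a+b}\le\sqrt a+\sqrt b$. No gaps.
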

		\begin{proof}
			Thanks to \cref{eq:holder-projC}, we have 
			\[
			\begin{aligned}
				{}&		\nm{\proj_{C(y_k)}\left(\frac{y_k-x_k}{\tau}\right)-\proj_{C(x_k)}\left(\frac{y_k-x_k}{\tau}\right)}^2\leq \rho \Lambda_\tau(x_k,y_k),
			\end{aligned}
			\]
			where $\rho= 3\nm{(y_k-x_k)/\tau}+d\left(0,C(x_k)\right)+d\left(0,C(y_k)\right)$ and 
			\[
			\Lambda_\tau(x_k,y_k):=\max\left\{\sup_{p\in C(x_k)}d(p,C(y_k)),\,\sup_{q\in C(y_k)}d(q,C(x_k))\right\}.
			\]
			An elementary computation gives
			\[
			\begin{aligned}
				\rho\leq {}&3\nm{\frac{y_k-x_k}{\tau}}+\max_{1\leq j\leq m}\nm{\nabla f_j(x_k)}+\max_{1\leq j\leq m}\nm{\nabla f_j(y_k)}\\
				\leq {}&3\nm{\frac{y_k-x_k}{\tau}}+2\max_{1\leq j\leq m}\nm{\nabla f_j(0)}+\max_{1\leq j\leq m}\nm{\nabla f_j(x_k)-\nabla f_j(0)}\\
				{}&\quad +\max_{1\leq j\leq m}\nm{\nabla f_j(y_k)-\nabla f_j(0)}\\
				\leq {}&3\nm{\frac{y_k-x_k}{\tau}}+2\max_{1\leq j\leq m}\nm{\nabla f_j(0)}+L(\nm{x_k}+\nm{y_k})\\
				\leq {}&3\nm{\frac{y_k-x_k}{\tau}}+2\max_{1\leq j\leq m}\nm{\nabla f_j(0)}+LC_1,	
			\end{aligned}
			\]
			in the last step we used \cref{eq:bd-xk-yk}.
			Given any $p = \sum_{j=1}^{m}\lambda_j\nabla f_j(x_k)\in C(x_k)$ with $\lambda\in\Delta_m$, we have
			\[
			\begin{aligned}
				d(p,C(y_k))\leq{}& \nm{\sum_{j=1}^{m}\lambda_j(\nabla f_j(x_k)-\nabla f_j(y_k))}\leq \sum_{j=1}^{m}\lambda_j\nm{\nabla f_j(x_k)-\nabla f_j(y_k)}\\
				\leq{}& \sum_{j=1}^{m}\lambda_jL_j\nm{x_k-y_k}\leq L\nm{x_k-y_k},
			\end{aligned}
			\]
			and similarly it holds that $d(q,C(x_k))\leq L\nm{x_k-y_k}$ for any $q\in C(y_k)$. Consequently, we obtain $\Lambda_\tau(x_k,y_k)\leq L\nm{x_k-y_k}$ and this leads to the desired estimate \cref{eq:est-3.2}.
		\end{proof}
		
		\begin{lem}\label{lem:y-p-proj}
			Assume $f_j\in\mathcal F_{L_j}^1(\R^n)$ for $1\leq j\leq m$ and \cref{assum:j0} holds true,	we have 
			\begin{equation}\label{eq:est-D-proj}\small
				\begin{aligned}
					{}&\nm{DF(y_k)\lambda_{\tau}(x_k,y_k) - \proj_{C(x_k)}\left(\frac{y_k-x_k}{\tau }\right)} \\\leq{}& \sqrt{C_2L}\nm{x_k-y_k}^{1/2}+\sqrt{\frac{10L}{\tau}}\nm{x_k-y_k},
				\end{aligned}
			\end{equation}
			where  $C_2>0$ is define in \cref{eq:est-3.2}.
		\end{lem}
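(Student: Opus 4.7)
The plan is to obtain \cref{eq:est-D-proj} by combining \cref{lem:est-D-D} and \cref{lem:est-proj-proj} through the triangle inequality applied to the intermediate point $\proj_{C(y_k)}\left((y_k-x_k)/\tau\right)$. Specifically, I would write
\[
\begin{aligned}
{}&\nm{DF(y_k)\lambda_{\tau}(x_k,y_k) - \proj_{C(x_k)}\left(\tfrac{y_k-x_k}{\tau }\right)}\\
\leq{}& \nm{DF(y_k)\lambda_{\tau}(x_k,y_k) - \proj_{C(y_k)}\left(\tfrac{y_k-x_k}{\tau}\right)}+\nm{\proj_{C(y_k)}\left(\tfrac{y_k-x_k}{\tau}\right)-\proj_{C(x_k)}\left(\tfrac{y_k-x_k}{\tau}\right)},
\end{aligned}
\]
and then invoke the two preceding lemmas to bound each term.

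The first term is controlled directly by \cref{lem:est-D-D} by $\sqrt{2L/\tau}\,\nm{x_k-y_k}$, and the second term is controlled by \cref{lem:est-proj-proj} by $\sqrt{C_2L}\,\nm{x_k-y_k}^{1/2}+\sqrt{3L/\tau}\,\nm{x_k-y_k}$. Adding the two bounds produces a $\nm{x_k-y_k}^{1/2}$ term with coefficient $\sqrt{C_2L}$ and a $\nm{x_k-y_k}$ term with coefficient $\sqrt{2L/\tau}+\sqrt{3L/\tau}$. To recover the target coefficient $\sqrt{10L/\tau}$, I would use the elementary inequality $\sqrt{2}+\sqrt{3}\leq\sqrt{10}$, which follows from $(\sqrt{2}+\sqrt{3})^2=5+2\sqrt{6}<10$ since $2\sqrt{6}<5$.

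There is no genuine obstacle here; the argument is a purely mechanical triangle-inequality assembly of the two auxiliary estimates already established, together with the small numerical observation above. The only thing to be careful about is that \cref{assum:j0} and the uniform bound \cref{eq:bd-xk-yk} are needed solely through the invocation of \cref{lem:est-proj-proj}, so I would note that the hypotheses of the present lemma are exactly what is required to apply both preceding lemmas simultaneously, and the proof concludes in a few lines.
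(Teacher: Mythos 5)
Your proposal is correct and coincides with the paper's proof, which likewise obtains \cref{eq:est-D-proj} by the triangle inequality through the intermediate point $\proj_{C(y_k)}\left(\frac{y_k-x_k}{\tau}\right)$ and the bounds of \cref{lem:est-D-D,lem:est-proj-proj}; your verification that $\sqrt{2}+\sqrt{3}\leq\sqrt{10}$ correctly supplies the small numerical step the paper leaves implicit.
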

		\begin{proof}
			Combining \cref{lem:est-D-D,lem:est-proj-proj} and the triangle inequality proves \cref{eq:est-D-proj}.
		\end{proof}
		\subsection{Proof of \cref{thm:ode-apg-mop}}
		\label{sec:pf-ode}
		Thanks to \cref{eq:pacc,lem:y-p-proj}, we have 
		\[
		\begin{aligned}
			x_{k+1}
			={}&	y_k-\tau DF(y_k)\lambda_\tau(x_k,y_k)=y_k-\tau \proj_{C(x_{k})}\left(\frac{y_k-x_k}{\tau }\right)-\tau \Delta_k,
		\end{aligned}
		\]
		where $\Delta_k=DF(y_k)\lambda_\tau(x_k,y_k)-\proj_{C(x_{k})}\left(\frac{y_k-x_k}{\tau }\right)$ and 
		\begin{equation}\label{eq:est-Dk}
			\nm{\Delta_k}\leq \sqrt{C_2L}\nm{x_k-y_k}^{1/2}+\sqrt{\frac{10L}{\tau}}\nm{x_k-y_k}.
		\end{equation} 
		
		Define an auxiliary sequence $\{z_k\}_{k=0}^\infty$ by that
		\[
		z_{k+1} = x_{k+1}+	\left(\theta_k^{-1}-1\right)\left(x_{k+1}-x_k\right),
		\]
		which gives $x_{k+1}-y_k
		={} \theta_k(z_{k+1}-z_k)$.
		Following the idea from \cite{Luo2024av}, we rewrite \cref{eq:apg-mop} as a finite difference template
		\begin{subnumcases}{\label{eq:diff-mop-agd}}
			\frac{\left[\frac{\sqrt{\tau}}{\theta_{k+1}}\right]^{2}-\left[\frac{\sqrt{\tau}}{\theta_k }\right]^{2}}{\sqrt{\tau}}=	{}\frac{\sqrt{\tau}}{\theta_{k+1}},\\
			\frac{\sqrt{\tau}}{\theta_k }\frac{x_{k+1}-x_k}{\sqrt{\tau}}=	{} z_{k+1}  -x_k,\\
			\frac{z_{k+1}-z_k}{\sqrt{\tau}}=	{} - \frac{\sqrt{\tau}}{\theta_k }  \left[\proj_{C(x_{k})}\left(\frac{y_k-x_k}{\tau }\right)+\Delta_k\right].
			\label{eq:zk1-zk}
		\end{subnumcases}
		According to \cref{thm:sub-prob-case1}, \eqref{eq:zk1-zk} is equivalent to 
		\[
		\frac{z_{k+1}-z_k}{\sqrt{\tau}}= - \frac{\sqrt{\tau}}{\theta_k }  \left[\proj_{C(x_{k})}\left(\frac{\theta_k }{\sqrt{\tau}}\left[\frac{3}{2}\frac{x_{k+1}-x_k}{\sqrt{\tau}}-\frac{z_{k+1}-z_k}{\sqrt{\tau}}\right]+\frac{3\theta_k-2}{3\theta_k}\tau \Delta_k\right)+\Delta_k\right].
		\] 
		By \cite[Lemma 4.2]{Tanabe2023a}, it is not hard to find that $\theta_k\geq \theta_0+k/2$ for all $k\geq 0$, which yields the following two facts:
		\begin{itemize}
			\item $1-2/(3\theta_0)\leq (3\theta_k-2)/(3\theta_k)\leq 1$,
			\item $\nm{\proj_{C(x_{k})}\left(\frac{\theta_k }{\sqrt{\tau}}\left[\frac{3}{2}\frac{x_{k+1}-x_k}{\sqrt{\tau}}-\frac{z_{k+1}-z_k}{\sqrt{\tau}}\right]\right)-\proj_{C(x_{k})}\left(\frac{y_k-x_k}{\tau }\right)}\leq C(\theta_0)\tau\nm{\Delta_k}$,
		\end{itemize}
		where $C(\theta_0):=\max\{1,\snm{1-2/(3\theta_0)}\}$.
		
		We then introduce the time $t_k = \sqrt{\tau}k$ and the ansatzs
		\[
		X(t_k) = x_k,\quad Z(t_k)=z_k,\quad \theta(t_k) = \frac{\sqrt{\tau}}{\theta_k },
		\]
		with some smooth trajectories $(X(t),\,Z(t))$ and parameter $\theta(t)$ for $t\geq 0$. 
		We obtain from \cref{eq:diff-mop-agd} that
		\begin{equation}\label{eq:mid-}
			\left\{
			\begin{aligned}
				{}&	\frac{\theta^2(t_{k+1})-\theta^2(t_k)}{\sqrt{\tau}}=	{}\theta(t_{k+1}),\\
				{}&	\theta(t_k)\frac{X(t_{k+1})-X(t_k)}{\sqrt{\tau}}=	{} Z(t_{k+1})  -X(t_k),\\
				{}&	\frac{Z(t_{k+1})-Z(t_k)}{\sqrt{\tau}}=	{} - \theta(t_k)  \proj_{C(X(t_k))}\left(W\right)+O(\sqrt{\tau})\nm{\Delta_k}, 
			\end{aligned}
			\right.
		\end{equation}
		where 
		\[
		W:=\frac{3}{2}\frac{X(t_{k+1})-X(t_k)}{\theta(t_k)\sqrt{\tau}}-\frac{Z(t_{k+1})-Z(t_k)}{\theta(t_k)\sqrt{\tau}}.
		\]
		Note that for fixed $k\in\mathbb N$ we have 
		\[
		y_k-x_k = \theta_k(z_k-x_k) = \frac{\theta_k}{\sqrt{\tau}}\cdot \sqrt{\tau}(z_k-x_k)  = \sqrt{\tau}\cdot \frac{Z(t_k)-X(t_k)}{\theta(t_k)} =O(\sqrt{\tau}),
		\]
		which together with \cref{eq:est-Dk} implies $\nm{\Delta_k}=O(1) $. Thus, taking the limit $\tau\to0$ in \cref{eq:mid-} gives
		\[
		\left\{
		\begin{aligned}
			{}&[\theta^2(t)]' = \theta(t),\\
			{}&\theta(t)X'(t) = Z(t)-X(t),\\
			{}& Z'(t) = -\theta(t)\proj_{C(X(t))}\left(\frac{3X'(t)-2Z'(t)}{2\theta(t)}\right).
		\end{aligned}
		\right.
		\]
		This finishes the proof of \cref{thm:ode-apg-mop}.

		\section{Accelerated Multiobjective Gradient Flow}
		\label{sec:amg}
		In the rest of this paper, suppose \cref{assum:Lj-muj,assum:j0,assum:alpha-pw} hold true. The continuous model \cref{eq:2ndode-apg-mop} is used only for convex objectives. We borrow the time scaling idea from \cite{luo_differential_2021} to handle the convex case and strongly convex case in a unified way. 
		
		To do so, we introduce the scaling parameter $\gamma:[0,\infty)\to\R_+$ by that
		\begin{equation}\label{eq:gama}
			\gamma' = \mu-\gamma,
		\end{equation}
		with arbitrary positive initial condition $\gamma(0)=\gamma_0>0$. It is clear that $\gamma(t)=\mu+(\gamma_0-\mu)e^{-t}$ is positive for all $t>0$. Our new continuous model, called Accelerated Multiobjective Gradient (AMG) flow, reads as follows
		\begin{equation}\label{eq:AMG}
			\tag{AMG}
			\gamma X'' + (\mu+\gamma)X'+\proj_{C(X)}(-\gamma X'') = 0,
		\end{equation}
		with the initial conditions $X(0)=x_0\in\R^n$ and $X'(0)=x_1\in\R^n$. By the translation decomposition \cref{eq:proj-transla}, we also have a gradient-like formulation
		\begin{equation}\label{eq:AMG'} (\mu+\gamma)X'+\proj_{\gamma X''+C(X)}(0) = 0.
		\end{equation}
		\subsection{Equivalent models}
		Let $Z: = X + X' $ and rewrite \cref{eq:AMG} together with the parameter equation \cref{eq:gama} as a first-order system
		\begin{equation}\label{eq:AMG-sys} 
			\tag{AMG-sys}
			\left\{
			\begin{aligned}
				{}&\gamma' = \mu-\gamma,\\
				{}&		X' = Z-X,\\
				{}&		\gamma Z'  = \mu(X -Z ) -	\proj_{C(X )}(\gamma X'-\gamma Z' ),
			\end{aligned}
			\right.
		\end{equation}
		with the initial conditions $X(0)=x_0$ and $Z(0)=x_0+x_1$. Note that this is an implicit model since the right hand side depends on $X'$ and $Z'$. Applying \cref{thm:sub-prob-case1} to the equation for $Z'$ gives
		\[
		\begin{aligned}
			\gamma Z' \in {}& \mu(X -Z )-{\argmin}_{v\in C(X )}\dual{\mu(X-Z)-\gamma X' ,v}\\
			= {}& \mu(X -Z )-{\argmin}_{v\in C(X )}\dual{(\mu+\gamma)(X-Z) ,v}\\	
			={}& \mu(X -Z )- {\argmin}_{v\in C(X )}\dual{X-Z ,v},
		\end{aligned}
		\]
		in view of the facts that $\mu+\gamma>0$ and $X'=Z-X$. This leads to an equivalent autonomous differential inclusion
		\begin{equation}\label{eq:AMG-sys-ex}   
			\tag{AMG-DI}
			\left\{
			\begin{aligned}
				{}&\gamma' = \mu-\gamma,\\
				{}&		X' = Z-X,\\
				{}&		\gamma Z'  \in \mu(X -Z ) - {\argmin}_{v\in C(X )}\dual{X-Z ,v}.
			\end{aligned}
			\right.
		\end{equation}
		
		In fact,  we can consider a more 	generalized variant of \cref{eq:AMG-sys}:
		\begin{equation}\label{eq:AMG-sys'}
			\tag{AMG-sys'}
			\left\{
			\begin{aligned}
				{}&	\gamma' = \mu-\gamma.\\
				{}&		X' = Z-X,\\
				{}&		\gamma Z'  = \mu(X -Z ) -	\proj_{C(X )}(W),
			\end{aligned}
			\right.
		\end{equation} 
		with $	W = \alpha (Z -X )+\beta X' -\gamma Z' $, where $\alpha$ and $\beta$ are arbitrary such that $\alpha+\beta+\mu>0$. Again, using \cref{thm:sub-prob-case1} gives 
		\[
		\proj_{C(X )}(W) \in{\argmin}_{v\in C(X )}\dual{X-Z ,v}.
		\]
		It is not hard to conclude that if $(X,Z)$ is a solution to \cref{eq:AMG-sys-ex}, then it is also a solution to \cref{eq:AMG-sys} and \cref{eq:AMG-sys'}. 
		
		Following \cite[Section 3]{Sonntag2024a}, under the assumption $f_j\in\mathcal F_{L_j}^1(\R^n)$ for all $1\leq j\leq m$, we can show that the differential inclusion  \cref{eq:AMG-sys-ex} exists an absolutely continuous solution $(X,Z)$ over $(0,\infty)$, which satisfies \cref{eq:AMG-sys,eq:AMG-sys'} for almost all $t>0$. In addition, in proper sense (cf.\cite[Definition 3.6]{Sonntag2024a}), $X$ is a solution to \cref{eq:AMG}.
		\subsection{Lyapunov analysis}
		Recall that $f(x;z):=\min_{1\leq j\leq m} \left[f_j(x)-f_j(z)\right]$. Let  $(X,Z)$ be a solution to \cref{eq:AMG-sys-ex}. We introduce the Lyapunov function
		\begin{equation}\label{eq:L-mnag}	
			\mathcal E(t;z) := f(X(t) ;z)+\frac{\gamma(t) }{2}\nm{Z(t) -z}^2,\quad\,t>0,
		\end{equation}
		where $z\in\R^n$.
		To prove the exponential decay, we have to take the time derivative with respect to the nonsmooth function $f(X(t);z)$. This is promised by the following lemma; see \cite[Lemma 4.12]{Sonntag2024a}.
		\begin{lem}\label{lem:df-x-z}
			Let  $(X,Z)$ be a solution to \cref{eq:AMG-sys-ex}, then $f(X;z)$ is differentiable for almost all $t>0$. In particular, for almost all $t>0$, there exists $1\leq i_0\leq m$ such that
			\[
			\frac{\dd }{\dd t}f(X(t) ;z) = \dual{\nabla f_{i_0}(X(t) ),X'(t)}.
			\]
		\end{lem}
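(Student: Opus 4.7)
The plan is to exploit the structure of $f(X(t);z) = \min_{1\leq j\leq m}[f_j(X(t))-f_j(z)]$ as a pointwise minimum of finitely many compositions of smooth functions with the absolutely continuous trajectory $X$, and then invoke a Danskin-type envelope argument.

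First I would fix $z\in\R^n$ and define $g_j(t):=f_j(X(t))-f_j(z)$ for $1\leq j\leq m$ together with $g(t):=\min_j g_j(t)=f(X(t);z)$. Since $(X,Z)$ is an absolutely continuous solution of \cref{eq:AMG-sys-ex} on $(0,\infty)$ and each $f_j\in\mathcal F_{L_j}^1(\R^n)\subset\mathcal F_{L}^1(\R^n)$ is $C^1$, the chain rule for Sobolev compositions yields that each $g_j$ is absolutely continuous with $g_j'(t)=\dual{\nabla f_j(X(t)),X'(t)}$ for almost every $t>0$. The minimum of finitely many absolutely continuous functions is absolutely continuous, so $g$ is differentiable almost everywhere as well. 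Let $T$ denote the full-measure set of $t$ at which $g$ and all $g_j$ are differentiable simultaneously.

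Next, for $t_0\in T$, I would identify the active set $I(t_0):=\{j:g_j(t_0)=g(t_0)\}$. By continuity of the $g_j$, there is a neighborhood of $t_0$ on which the inactive indices $j\notin I(t_0)$ satisfy $g_j(t)>g(t_0)+\epsilon$, while $g(t)\to g(t_0)$, so the minimum in $g(t)$ is realized only among indices in $I(t_0)$ for $t$ sufficiently close to $t_0$. Picking any sequence $h_n\downarrow 0$, we can write $g(t_0+h_n)=g_{j_n}(t_0+h_n)$ for some $j_n\in I(t_0)$; by finiteness, a subsequence stabilizes at a single index $i_0\in I(t_0)$. Expanding both sides via differentiability at $t_0$,
\begin{equation*}
h_n\,g'(t_0)+o(h_n)=g(t_0+h_n)-g(t_0)=g_{i_0}(t_0+h_n)-g_{i_0}(t_0)=h_n\,g_{i_0}'(t_0)+o(h_n),
\end{equation*}
dividing by $h_n$ and letting $n\to\infty$ gives $g'(t_0)=g_{i_0}'(t_0)=\dual{\nabla f_{i_0}(X(t_0)),X'(t_0)}$, which is the claimed identity.

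The only mild subtlety is checking that the chain rule $(f_j\circ X)'(t)=\dual{\nabla f_j(X(t)),X'(t)}$ holds a.e.; this is where the absolute continuity of $X$ on compact subintervals of $(0,\infty)$ (guaranteed by the existence theory cited from \cite[Section 3]{Sonntag2024a}) together with the Lipschitz continuity of $\nabla f_j$ is used, and it follows from a standard Sobolev chain rule. Everything else is a clean envelope argument on a finite collection and does not require any additional assumption beyond the already-imposed \cref{assum:Lj-muj}.
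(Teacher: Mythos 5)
Your argument is correct. Note, however, that the paper itself does not prove this lemma at all: it simply invokes \cite[Lemma 4.12]{Sonntag2024a}, so there is no in-paper proof to compare against. Your self-contained proof is essentially the standard argument underlying that cited result: (i) each $g_j(t)=f_j(X(t))-f_j(z)$ is absolutely continuous on compact subintervals of $(0,\infty)$ because $X$ is absolutely continuous and locally bounded while $\nabla f_j$ is Lipschitz, and the chain rule holds wherever $X'(t)$ exists (in fact you only need ordinary differentiability of the composition at such points, so the Sobolev chain rule is heavier machinery than required); (ii) the pointwise minimum of finitely many absolutely continuous functions is absolutely continuous, hence differentiable a.e.; (iii) at a point $t_0$ of simultaneous differentiability, continuity confines the minimizing index to the active set $I(t_0)$ near $t_0$, and extracting a constant index $i_0$ along a sequence $h_n\downarrow 0$ gives $g(t_0+h_n)-g(t_0)=g_{i_0}(t_0+h_n)-g_{i_0}(t_0)$, whence $g'(t_0)=g_{i_0}'(t_0)=\dual{\nabla f_{i_0}(X(t_0)),X'(t_0)}$ since both derivatives exist and are determined by this one-sided sequence. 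The only stylistic nitpick is the phrase ``$g_j(t)>g(t_0)+\epsilon$ while $g(t)\to g(t_0)$'': what you actually need (and what your continuity argument delivers) is $g_j(t)>g(t)$ for all $t$ in a neighborhood of $t_0$ and all inactive $j$, which is exactly how it should be stated. With that cosmetic fix, your proof is complete and can serve as a substitute for the external citation.
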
 
		
		Since $(X,Z)$ satisfies \cref{eq:AMG-sys'}, it follows that
		\[
		\dual{\nabla f_j(X)-\proj_{C(X)}(W)	, (\mu+\alpha+\beta)X' }\leq 0,
		\]
		for all $1\leq j\leq m$, which implies 
		\begin{equation}\label{eq:key-var}
			\dual{\nabla f_j(X)-\proj_{C(X)}(W)	, Z-X }\leq 0.
		\end{equation}
		This yields the exponential decay.
		\begin{thm}\label{thm:rate-L-mop-hnag}
			Let  $(X,Z)$ be a solution to \cref{eq:AMG-sys-ex}, then 
			\[
			\frac{\dd }{\dd t}	\mathcal E(t;z)
			\leq -\mathcal E(t;z)-\frac{\mu}{2}\nm{Z(t)-X(t)}^2,
			\]
			for almost all $t>0$. This implies 
			\begin{equation}\label{eq:exp-Et}
				f(X(t) ;z)+\frac{\gamma(t) }{2}\nm{Z(t) -z}^2\leq e^{-t}\left(f(x_0 ;z)+\frac{\gamma_0 }{2}\nm{x_0+x_1 -z}^2\right),
			\end{equation}
			for all $t>0$.
		\end{thm}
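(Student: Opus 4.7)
The plan is to differentiate the Lyapunov functional $\mathcal E(t;z)$ directly, substitute the dynamics from \cref{eq:AMG-sys'} and \cref{eq:gama}, combine the variational inequality \cref{eq:key-var} with pointwise $\mu$-strong convexity of each $f_j$ to cancel the derivative of $f(X;z)$ against the projection term, and finally collapse the remaining quadratics via a parallelogram-type identity. Once the sharp pointwise inequality $\frac{\dd}{\dd t}\mathcal E(t;z) \leq -\mathcal E(t;z) - \frac{\mu}{2}\nm{Z-X}^2$ is in hand, the exponential bound \cref{eq:exp-Et} follows from Gronwall applied to $e^t \mathcal E(t;z)$.

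First I would invoke \cref{lem:df-x-z} to pick an active index $i_0$ so that $\frac{\dd}{\dd t}f(X;z) = \dual{\nabla f_{i_0}(X), X'}$ for a.e.\ $t>0$, and compute $\frac{\dd}{\dd t}\bigl[\frac{\gamma}{2}\nm{Z-z}^2\bigr] = \frac{\gamma'}{2}\nm{Z-z}^2 + \gamma\dual{Z-z, Z'}$. Substituting $\gamma'=\mu-\gamma$, $X'=Z-X$, and $\gamma Z'=\mu(X-Z)-\proj_{C(X)}(W)$ yields
\begin{equation*}
\frac{\dd}{\dd t}\mathcal E(t;z) = \dual{\nabla f_{i_0}(X), Z-X} + \frac{\mu-\gamma}{2}\nm{Z-z}^2 + \mu\dual{Z-z, X-Z} - \dual{Z-z, \proj_{C(X)}(W)}.
\end{equation*}
The crucial move is to apply \cref{eq:key-var} at $j=i_0$ to replace $\dual{\nabla f_{i_0}(X), Z-X}$ by $\dual{\proj_{C(X)}(W), Z-X}$, which then combines with the last inner product into $\dual{\proj_{C(X)}(W), z-X}$. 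Writing $\proj_{C(X)}(W) = \sum_{j=1}^{m} \lambda_j^\star \nabla f_j(X)$ for some $\lambda^\star \in \Delta_m$ and invoking $\mu$-strong convexity of each $f_j$ pointwise gives
\begin{equation*}
\dual{\proj_{C(X)}(W), z-X} \leq \sum_{j=1}^{m} \lambda_j^\star [f_j(z)-f_j(X)] - \frac{\mu}{2}\nm{X-z}^2 \leq -f(X;z) - \frac{\mu}{2}\nm{X-z}^2,
\end{equation*}
where the last step uses $f_j(X)-f_j(z) \geq f(X;z) = \min_j [f_j(X)-f_j(z)]$ for every $j$.

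All that remains is an algebraic check: expanding $\mu\dual{Z-z, X-Z} = \mu\dual{Z-z, X-z} - \mu\nm{Z-z}^2$ and using the identity $\nm{X-Z}^2 = \nm{X-z}^2 + \nm{Z-z}^2 - 2\dual{X-z, Z-z}$ shows that the leftover $\mu$-terms collapse exactly to $-\frac{\mu}{2}\nm{Z-X}^2$, while the $\gamma$-terms combine into $-\frac{\gamma}{2}\nm{Z-z}^2$. Altogether $\frac{\dd}{\dd t}\mathcal E(t;z) \leq -\mathcal E(t;z) - \frac{\mu}{2}\nm{Z-X}^2$ for a.e.\ $t>0$, after which \cref{eq:exp-Et} is a routine Gronwall argument on $e^t \mathcal E(t;z)$ using $Z(0)=x_0+x_1$. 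The main obstacle is bridging the active gradient $\nabla f_{i_0}(X)$ produced by differentiating the pointwise minimum in $f(X;z)$ with the full convex combination $\proj_{C(X)}(W)$ appearing in the dynamics; \cref{eq:key-var} is exactly what performs this swap, and once the exchange is made the remainder of the computation is essentially an exact identification of quadratic terms.
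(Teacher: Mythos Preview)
Your proposal is correct and follows essentially the same route as the paper: differentiate $\mathcal E(t;z)$ via \cref{lem:df-x-z}, substitute the dynamics and $\gamma'=\mu-\gamma$, use \cref{eq:key-var} to trade $\dual{\nabla f_{i_0}(X),Z-X}$ for $\dual{\proj_{C(X)}(W),Z-X}$, bound $\dual{\proj_{C(X)}(W),z-X}$ by strong convexity of the $f_j$, and collapse the quadratic $\mu$-terms via the three-point identity to recover $-\mathcal E(t;z)-\frac{\mu}{2}\nm{Z-X}^2$. The only cosmetic difference is that the paper first rewrites $\mu\dual{X-Z,Z-z}$ using the identity \cref{eq:XYZ} and then isolates the term $\dual{\nabla f_{i_0}(X)-\proj_{C(X)}(W),Z-X}\leq 0$, whereas you perform the swap and the quadratic bookkeeping in the opposite order; the two computations are algebraically identical.
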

		\begin{proof} 
			Thanks to \cref{lem:df-x-z}, for almost all $t>0$, there exists $1\leq i_0\leq m$ such that 
			\[
			\begin{aligned}
				\frac{\dd }{\dd t}	\mathcal E(t;z) = {}& \frac{\gamma'}{2}\nm{Z-z}^2+\gamma \dual{Z',Z-z}+\dual{\nabla f_{i_0}(X),X'} \\
				= {}& \frac{\mu-\gamma}{2}\nm{Z-z}^2+\gamma \dual{Z',Z-z}+\dual{\nabla f_{i_0}(X),X'} .
			\end{aligned}
			\]		
			Since $(X,Z)$ is a solution to \cref{eq:AMG-sys'}, we obtain
			\[\small
			\begin{aligned}
				\frac{\dd }{\dd t}	\mathcal E(t;z)
				= {}& \frac{\mu-\gamma}{2}\nm{Z-z}^2+ \dual{\mu(X-Z) -\proj_{C(X )}(W ),Z-z}
				+\dual{\nabla f_{i_0}(X),Z-X}\\
				= {}&\frac{\mu}{2}\nm{X-z}^2-\frac{\gamma}{2}\nm{Z-z}^2-\frac{\mu}{2}\nm{Z-X}^2\\
				{}&\quad+\dual{\nabla f_{i_0}(X),Z-X}-\dual{\proj_{C(X )}(W ),Z-z}\\
				= {}&\frac{\mu}{2}\nm{X-z}^2-\frac{\gamma}{2}\nm{Z-z}^2-\frac{\mu}{2}\nm{Z-X}^2-\dual{\proj_{C(X )}(W ),X-z}\\
				{}&\quad+\dual{\nabla f_{i_0}(X)-\proj_{C(X )}(W ),Z-X},
			\end{aligned}
			\]		
			where in the second line we used the identity
			\begin{equation}\label{eq:XYZ}
				2\left\langle X-Z,Z-z\right\rangle=\left\|X-z\right\|^{2}-\left\|Z-z\right\|^{2}-\left\|X-Z\right\|^{2}.
			\end{equation}
			In view of \cref{eq:key-var}, the last line is bounded above by $0$. Assume $\proj_{C(X )}(W ) = \sum_{j=1}^{m}\lambda_j\nabla f_j(X)$ with some $\lambda\in\Delta_m$. Then it holds that 
			\[
			\begin{aligned}
				{}&		\dual{\proj_{C(X )}(W ),X-z}= \sum_{j=1}^{m}\lambda_j	\dual{\nabla f_j(X),X-z}\\
				\geq{}&\sum_{j=1}^{m}\lambda_j	 \left(f_j(X)-f_j(z)+\frac{\mu_j}{2}\nm{X-z}^2\right)\geq\min_{1\leq j\leq m}\left[f_j(X)-f_j(z)\right]+\frac{\mu}{2}\nm{X-z}^2.
			\end{aligned}.
			\]
			Consequently, we arrive at the estimate
			\[
			\begin{aligned}
				\frac{\dd }{\dd t}	\mathcal E(t;z)
				\leq  {}&-\frac{\gamma}{2}\nm{Z-z}^2-\frac{\mu}{2}\nm{Z-X}^2-\min_{1\leq j\leq m}\left[f_j(X)-f_j(z)\right]\\
				={}&-\mathcal E(t;z)-\frac{\mu}{2}\nm{Z-X}^2.
			\end{aligned}
			\]		
			This completes the proof of this theorem.
		\end{proof}
		\subsection{Convergence rate of the merit function} 
		Let  $(X,Z)$ be a solution to \cref{eq:AMG-sys-ex}.
		We shall prove the convergence rate of the merit function $u_0(X(t))=\sup_{z\in\R^n}f(X(t);z)$. 
		Since $X$ satisfies \cref{eq:AMG}, it follows that
		\[
		\dual{\nabla f_j(X)-\proj_{C(X)}(-\gamma X''),(\mu+\gamma)X'}\leq 0,
		\]
		for all $1\leq j\leq m$. This gives
		\[
		\dual{\nabla f_j(X)+\gamma X''+(\mu+\gamma)X',X'}\leq 0,
		\]
		which implies further that
		\[
		\frac{\dd}{\dd t}\left[f_j(X(t))+\frac{\gamma(t)}{2}\nm{X'(t)}^2\right]\leq -\frac{\mu+3\gamma(t)}{2}\nm{X'(t)}^2,
		\]
		for almost all $t>0$. From this we conclude the energy inequality, which together with \cref{lem:u0-,eq:exp-Et} leads to the desired estimate.
		\begin{lem}
			\label{lem:energy-ineq}
			Let  $(X,Z)$ be a solution to \cref{eq:AMG-sys-ex}. Then for any $1\leq j\leq m$,  the energy inequality holds true
			\begin{equation}\label{eq:energy-ineq}
				f_j(X(t))+\frac{\gamma(t)}{2}\nm{X'(t)}^2
				+\frac{1}{2}\int_{0}^{t}(\mu+3\gamma(s))\nm{X'(s)}^2\leq f_j(x_0)+\frac{\gamma_0}{2}\nm{x_1}^2,
			\end{equation}
			for all $t>0$. 
		\end{lem}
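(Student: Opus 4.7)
\medskip

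\noindent\textbf{Proof plan.} The paragraph immediately preceding the statement has essentially already done the work: it produces, for every $1\le j\le m$ and for almost all $t>0$, the pointwise differential inequality
\[
\frac{\dd}{\dd t}\!\left[f_j(X(t))+\frac{\gamma(t)}{2}\nm{X'(t)}^2\right]\leq -\frac{\mu+3\gamma(t)}{2}\nm{X'(t)}^2.
\]
My plan is simply to verify this inequality cleanly and then integrate it from $0$ to $t$, invoking absolute continuity of the trajectory to justify the use of the fundamental theorem of calculus.

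\medskip

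\noindent\textbf{Step 1: the pointwise inequality.} Starting from \cref{eq:AMG}, I isolate $(\mu+\gamma)X'=-\gamma X''-\proj_{C(X)}(-\gamma X'')$. The obtuse-angle characterization of the projection onto the convex set $C(X)$, applied with the point $-\gamma X''$ and any element $\nabla f_j(X)\in C(X)$, gives
\[
\dual{-\gamma X''-\proj_{C(X)}(-\gamma X''),\,\nabla f_j(X)-\proj_{C(X)}(-\gamma X'')}\le 0,
\]
which, after substituting $-\gamma X''-\proj_{C(X)}(-\gamma X'')=(\mu+\gamma)X'$ and noting $(\mu+\gamma)>0$, reduces to $\dual{\nabla f_j(X)-\proj_{C(X)}(-\gamma X''),X'}\le 0$. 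This is exactly the key inequality asserted in the text. Dotting \cref{eq:AMG} with $X'$ then yields $\gamma\dual{X'',X'}+(\mu+\gamma)\nm{X'}^2+\dual{\nabla f_j(X),X'}\le 0$, which rewrites as
\[
\frac{\dd}{\dd t}\!\left[f_j(X)+\tfrac{\gamma}{2}\nm{X'}^2\right]-\tfrac{\gamma'}{2}\nm{X'}^2+(\mu+\gamma)\nm{X'}^2\le 0.
\]
Substituting $\gamma'=\mu-\gamma$ (see \cref{eq:gama}) collapses the coefficient of $\nm{X'}^2$ to $-(\mu+3\gamma)/2$, giving the desired differential inequality.

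\medskip

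\noindent\textbf{Step 2: integration and initial data.} Since $(X,Z)$ is absolutely continuous on compact subintervals of $(0,\infty)$ (as discussed after \cref{eq:AMG-sys'}), so are $\gamma$, $X'$ and $f_j\circ X$, and the map $t\mapsto f_j(X(t))+\gamma(t)\nm{X'(t)}^2/2$ is absolutely continuous. Integrating the pointwise inequality from $0$ to $t$ and using $X(0)=x_0$, $X'(0)=x_1$, $\gamma(0)=\gamma_0$ yields exactly \cref{eq:energy-ineq}.

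\medskip

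\noindent\textbf{Main obstacle.} There is no serious obstacle; the whole argument is a standard Lyapunov-style energy estimate. The only delicate points are the variational inequality for the projection (which uses the identification $(\mu+\gamma)X'=-\gamma X''-\proj_{C(X)}(-\gamma X'')$ coming from \cref{eq:AMG}) and the justification of the pointwise-to-integral step, which requires absolute continuity of the solution trajectory but no convexity of $f_j\circ X$.
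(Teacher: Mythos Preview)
Your proposal is correct and follows essentially the same route as the paper: derive the variational inequality $\dual{\nabla f_j(X)-\proj_{C(X)}(-\gamma X''),X'}\le 0$ from \cref{eq:AMG} via the projection characterization, take the inner product of \cref{eq:AMG} with $X'$, use $\gamma'=\mu-\gamma$ to obtain the pointwise differential inequality, and then integrate. The paper presents exactly these steps (the paragraph preceding the lemma), so there is nothing substantively different.
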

		\begin{thm}\label{thm:exp-merit}
			Let  $(X,Z)$ be a solution to \cref{eq:AMG-sys-ex}. Then we have the exponential decay rate for the merit function 
			\begin{equation}\label{eq:exp-merit}
				u_0(X(t))
				\leq{} e^{-t}
				\left[u_0(x_0)+\gamma_0 \nm{x_0+x_1}^2+\gamma_0S^2(\alpha) \right],\quad t>0,
			\end{equation}
			where $	S(\alpha)=\sup_{F^*\in F(P_w\cap \mathcal L_F(\alpha))}\inf_{z\in F^{-1}(F^*)}\nm{z}<+\infty$ with $\alpha=F(x_0)+\gamma_0/2\nm{x_1}^2$.
		\end{thm}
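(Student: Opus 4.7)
My plan is to combine the pointwise Lyapunov estimate from \cref{thm:rate-L-mop-hnag} with the restricted representation of $u_0$ from \cref{lem:u0-}(ii), using \cref{lem:energy-ineq} to justify that restriction along the trajectory.

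First I would verify that $X(t)\in\mathcal{L}_F(\alpha)$ for all $t>0$. By \cref{lem:energy-ineq}, each component satisfies $f_j(X(t))\leq f_j(x_0)+\gamma_0/2\nm{x_1}^2$, so componentwise $F(X(t))\leq F(x_0)+\gamma_0/2\nm{x_1}^2\cdot\mathbf{1}=\alpha$. This places $X(t)$ in the level set \cref{assum:alpha-pw} applies to, and by \cref{lem:u0-}(i) we already get $S(\alpha)<+\infty$; by \cref{lem:u0-}(ii) we can rewrite
\[
u_0(X(t))=\sup_{F^*\in F(P_w\cap\mathcal{L}_F(\alpha))}\inf_{z\in F^{-1}(F^*)}f(X(t);z).
\]

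Next, for any fixed $z\in\R^n$, \cref{thm:rate-L-mop-hnag} (specifically \cref{eq:exp-Et}) gives
\[
f(X(t);z)\leq f(X(t);z)+\frac{\gamma(t)}{2}\nm{Z(t)-z}^2\leq e^{-t}\left(f(x_0;z)+\frac{\gamma_0}{2}\nm{x_0+x_1-z}^2\right),
\]
since the dropped term is nonnegative. Taking $\inf_{z\in F^{-1}(F^*)}$ and then $\sup_{F^*\in F(P_w\cap\mathcal{L}_F(\alpha))}$ on both sides and using $f(x_0;z)\leq u_0(x_0)$ for every $z$, I obtain
\[
u_0(X(t))\leq e^{-t}\left(u_0(x_0)+\frac{\gamma_0}{2}\sup_{F^*}\inf_{z\in F^{-1}(F^*)}\nm{x_0+x_1-z}^2\right).
\]
Finally, the elementary bound $\nm{x_0+x_1-z}^2\leq 2\nm{x_0+x_1}^2+2\nm{z}^2$ separates the constant piece from the $z$-dependent piece, and since $(\sup_{F^*}\inf_{z\in F^{-1}(F^*)}\nm{z})^2=S^2(\alpha)$ by definition, the $\sup$-$\inf$ collapses to $2\nm{x_0+x_1}^2+2S^2(\alpha)$, yielding exactly \cref{eq:exp-merit} after multiplying by $\gamma_0/2$.

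The main subtlety I expect is keeping the quantifiers straight: the Lyapunov bound \cref{eq:exp-Et} holds for each fixed $z$, but $u_0$ involves a supremum, so without the reduction in \cref{lem:u0-}(ii) the quadratic term $\gamma_0/2\nm{x_0+x_1-z}^2$ would blow up. The energy inequality is precisely what confines the trajectory to $\mathcal{L}_F(\alpha)$ so that this reduction is legitimate; everything else is just swapping $\sup$/$\inf$ past a monotone (linear) operation and the elementary quadratic splitting. No additional regularity of $X$ is required, because \cref{eq:exp-Et} is already pointwise in $t$.
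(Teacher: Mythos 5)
Your proposal is correct and follows essentially the same route as the paper: energy inequality to confine $X(t)$ to $\mathcal L_F(\alpha)$, \cref{lem:u0-}(ii) to restrict the supremum, \cref{eq:exp-Et} pointwise in $z$, and the quadratic splitting $\nm{x_0+x_1-z}^2\leq 2\nm{x_0+x_1}^2+2\nm{z}^2$ to produce the $S^2(\alpha)$ term. The paper merely compresses the sup--inf bookkeeping that you spell out explicitly.
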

		\begin{proof}
			It is clear that $x_0\in\mathcal L_F(\alpha)$.
			From \cref{eq:energy-ineq} we know that $X(t)\in\mathcal L_F(\alpha)$ for all $t>0$. Hence, by \cref{lem:u0-,eq:exp-Et}, we have
			\[
			\begin{aligned}
				u_0(X(t))={}&\sup_{F^*\in F(P_w\cap \mathcal L_F(\alpha))}\inf_{z\in F^{-1}(F^*)}f(X(t);z)
				\leq{}e^{-t}
				\left[u_0(x_0)+\gamma_0\nm{x_0+x_1}^2+\gamma_0S^2(\alpha) \right].
			\end{aligned}
			\]
			This completes the proof.
		\end{proof}

		\section{An IMEX Scheme with a QP Subproblem}
		\label{sec:imex-qp}
		We now consider proper numerical discretization for our continuous model \cref{eq:AMG-sys'}. In this section, we focus on an implicit-explicit (IMEX) scheme, which corresponds to an accelerated multiobjective gradient method with a Quadratic Programming subproblem. 
		\subsection{Numerical discretization}
		\label{sec:imex-qp-num}
		We propose the following IMEX scheme for \cref{eq:AMG-sys'}:
		\begin{subnumcases}{\label{eq:imex-qp}}
			\label{eq:imex-qp-gk1}
			\dfrac{\gamma_{k+1}-\gamma_k}{\tau_k}={}\mu-\gamma_{k+1},\\	
			\label{eq:imex-qp-xk1}
			\dfrac{x_{k+1}-x_k}{\tau_k}={}z_{k+1}-x_{k+1},\\
			\label{eq:imex-qp-vk1}
			\gamma_k\dfrac{z_{k+1}-z_k}{\tau_k}={}\mu(y_k-z_{k+1})-\proj_{C(y_k)}\left(w^{\rm QP}_{k+1}\right),
		\end{subnumcases}
		where $\tau_k>0$ denotes the step size, $y_k = (x_k+\tau_kz_k)/(1+\tau_k)$ and 
		\begin{equation}\label{eq:wk1}
			w^{\rm QP}_{k+1} ={} -\mu(z_{k+1}-y_k)+\gamma_k\dfrac{x_{k+1}-x_k}{\tau_k}-\gamma_k\dfrac{z_{k+1}-z_k}{\tau_k}.
		\end{equation} 
		By \eqref{eq:imex-qp-xk1} and \eqref{eq:imex-qp-vk1}, we have
		\[
		\frac{\gamma_k+\mu\tau_k}{\tau_k}z_{k+1} = \frac{\gamma_kz_k+\mu\tau_ky_{k}}{\tau_{k}}-\proj_{C(y_{k})}\left(w_k-\left(	\frac{\gamma_k+\mu\tau_k}{\tau_k}-\frac{\gamma_k}{1+\tau_k} \right)z_{k+1}\right),
		\]
		where 
		\[
		w_k=	\frac{\gamma_kz_k+\mu\tau_ky_{k}}{\tau_{k}}-\frac{\gamma_kx_k}{1+\tau_k}.
		\]
		Then according to \cref{thm:sub-prob-case1}, it follows that
		\[
		\begin{aligned}
			z_{k+1} ={}&\frac{\gamma_kz_k+\mu\tau_ky_k-\tau_kz_{k}^{\rm QP}}{\gamma_k+\mu\tau_k},\quad  z_{k}^{\rm QP}={}\proj_{C(y_k)}\left(\mu(y_k-x_k)+\frac{\gamma_k(z_k-x_k)}{\tau_k}\right),
		\end{aligned}
		\]
		which requires the computation of the standard projection (Quadratic Programming) onto the convex hull $C(y_k)=\conv{\nabla f_1(y_k),\cdots,\nabla f_m(y_k)}$. 
		
		Below, in \cref{algo:AMG-QP}, based on the IMEX scheme \cref{eq:imex-qp} and the step size relation $L\tau_k^2=\gamma_k(1+\tau_k)$, we present an accelerated multiobjective gradient method with Quadratic Programming (AMG-QP) for solving \cref{eq:min-mobj}. 
		\begin{algorithm}[H]
			\caption{ AMG-QP}
			\begin{algorithmic}[1] 
				\REQUIRE  Problem parameters: $L>0$ and $\mu\geq 0$.\\
				~~~~~~~Initial values: $	x_0,z_0\in\R^n$ and $\gamma_0>0$.\\
				~~~~~~~Maximum iterations: $ K\in\mathbb N$.			
				\FOR{$k=0,1,\ldots,K-1$}
				\STATE Choose step size  $\tau_k = (\gamma_k+\sqrt{\gamma_k^2+4L\gamma_k})/(2L)$.			
				\STATE Update $\displaystyle \gamma_{k+1} = (\gamma_k+\mu\tau_k)/(1+\tau_k)$.
				\STATE Compute $y_k = (x_k+\tau_kz_k)/(1+\tau_k)$.
				\STATE Compute $z_{k}^{\rm QP}={}\proj_{C(y_k)}\left(\mu(y_k-x_k)+\frac{\gamma_k(z_k-x_k)}{\tau_k}\right)$.
				\STATE Update $z_{k+1}=(\gamma_k+\mu\tau_k)^{-1}(\gamma_kz_k+\mu\tau_ky_k-\tau_kz_{k}^{\rm QP})$.
				\STATE Update $\displaystyle  x_{k+1}={}(x_k+\tau_kz_{k+1})/(1+\tau_k)$.
				\ENDFOR
				\ENSURE The approximated solution $x_K\in\R^n$.
			\end{algorithmic}
			\label{algo:AMG-QP}
		\end{algorithm}
		\subsection{Contraction property}
		\label{sec:imex-qp-lyap}
		Let $\{x_k,z_k\}_{k=0}^\infty$ be generated by the IMEX scheme \cref{eq:imex-qp} and introduce a discrete analogue to \cref{eq:L-mnag}:
		\begin{equation}\label{eq:Lk-mnag}	
			\mathcal E_k(z) := f(x_k ;z)+\frac{\gamma_k }{2}\nm{z_k-z}^2,\quad k\geq 0,
		\end{equation}
		where $z\in\R^n$. 
		\begin{lem}
			\label{lem:key-id-imex-qp}
			Let $\{x_k,z_k\}_{k=0}^\infty$ be generated by the IMEX scheme \cref{eq:imex-qp}, then we have 
			\begin{equation}\label{eq:key-id-imex-qp}
				\begin{aligned}
					{}&	\dual{\proj_{C(y_{k})}\left(w^{\rm QP}_{k+1}\right),x_{k+1}-x_k} = {}\max_{1\leq j\leq m}	\dual{\nabla f_j(y_{k}) ,  x_{k+1}-x_k},
				\end{aligned}
			\end{equation}
			where $w^{\rm QP}_{k+1}$ is defined by \cref{eq:wk1}.
		\end{lem}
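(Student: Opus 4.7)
The plan is to show that $p_k := \proj_{C(y_k)}(w^{\rm QP}_{k+1})$ is not only a metric projection but also a solution of the linear program $\max_{v \in C(y_k)} \dual{v, x_{k+1}-x_k}$. Once this is established, the identity \cref{eq:key-id-imex-qp} follows immediately from the fact that a linear functional on the convex hull $C(y_k) = \conv{\nabla f_1(y_k),\ldots,\nabla f_m(y_k)}$ attains its maximum at an extreme point, so the maximum equals $\max_{1\leq j\leq m}\dual{\nabla f_j(y_k), x_{k+1}-x_k}$.

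The bridge between the projection and the linear program is a single algebraic identity: I claim that
\begin{equation*}
w^{\rm QP}_{k+1} - p_k = \frac{\gamma_k}{\tau_k}(x_{k+1}-x_k).
\end{equation*}
To see this, I would read off $p_k$ from \eqref{eq:imex-qp-vk1}, which gives $p_k = \mu(y_k-z_{k+1}) - \gamma_k\tau_k^{-1}(z_{k+1}-z_k)$, and then subtract this from the defining formula \cref{eq:wk1} for $w^{\rm QP}_{k+1}$. The terms $\mu(y_k-z_{k+1})$ and $-\mu(z_{k+1}-y_k)$ cancel, as do the two $\gamma_k\tau_k^{-1}(z_{k+1}-z_k)$ contributions, leaving exactly $\gamma_k\tau_k^{-1}(x_{k+1}-x_k)$. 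This step is routine bookkeeping rather than a genuine obstacle.

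With that identity in hand, the projection characterization $\dual{p_k - w^{\rm QP}_{k+1}, v - p_k} \geq 0$ for every $v \in C(y_k)$ becomes
\begin{equation*}
-\frac{\gamma_k}{\tau_k}\dual{x_{k+1}-x_k,\, v - p_k} \geq 0 \quad \forall\, v \in C(y_k),
\end{equation*}
and since $\gamma_k, \tau_k > 0$, this says precisely that $p_k$ is a maximizer of $v \mapsto \dual{v, x_{k+1}-x_k}$ over $C(y_k)$. Plugging in $v = \nabla f_j(y_k)$ for each $j$ and combining with the extreme-point argument yields
\begin{equation*}
\dual{p_k, x_{k+1}-x_k} \;=\; \max_{v \in C(y_k)} \dual{v, x_{k+1}-x_k} \;=\; \max_{1\leq j\leq m}\dual{\nabla f_j(y_k), x_{k+1}-x_k},
\end{equation*}
which is exactly \cref{eq:key-id-imex-qp}.

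If a previously stated result (the referenced \cref{thm:sub-prob-case1} from the continuous analysis) already packages this ``projection $=$ linear-program selector'' correspondence for arbitrary inputs of the form $W = \alpha(Z-X) + \beta X' - \gamma Z'$, then one can short-cut the argument by checking that $w^{\rm QP}_{k+1}$ fits that template with the correct sign on $x_{k+1}-x_k$, and quote the lemma. Otherwise, the direct optimality-condition argument above is fully self-contained. The only point demanding care is to track signs in the cancellation identity, since a sign error there would flip the maximum into a minimum; there is no analytic difficulty beyond this.
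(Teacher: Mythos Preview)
Your proof is correct and follows essentially the same route as the paper. The paper computes the identical algebraic identity $w^{\rm QP}_{k+1}-p_k=\gamma_k\tau_k^{-1}(x_{k+1}-x_k)$, feeds it into the projection variational inequality with $v=\nabla f_j(y_k)$ to obtain $\dual{\nabla f_j(y_k)-p_k,\,x_{k+1}-x_k}\leq 0$ for each $j$, and then closes with the trivial reverse bound (since $p_k$ is a convex combination of the $\nabla f_j(y_k)$); your linear-program/extreme-point phrasing is just a repackaging of those same two inequalities.
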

		\begin{proof}
			Observing \eqref{eq:imex-qp-vk1} we have 
			\[
			\begin{aligned}
				\proj_{C(y_{k})}\left(w^{\rm QP}_{k+1}\right) ={}& -\mu(z_{k+1}-y_{k}) -	\gamma_k	\frac{z_{k+1}-z_k}{\tau_{k}},
			\end{aligned}
			\]
			which implies
			\[
			\begin{aligned}
				{}&	\dual{\nabla f_j(y_{k})-	\proj_{C(y_{k})}\left(w^{\rm QP}_{k+1}\right),w^{\rm QP}_{k+1}+\mu(z_{k+1}-y_{k})+	\gamma_k	\frac{z_{k+1}-z_k}{\tau_{k}}}\leq 0,
			\end{aligned}
			\]
			for all $1\leq j\leq m$. Notice that
			\[
			\begin{aligned}
				{}&	w^{\rm QP}_{k+1}+\mu(z_{k+1}-y_{k})+	\gamma_k	\frac{z_{k+1}-z_k}{\tau_{k}}= 
				\gamma_k 	\frac{x_{k+1}-x_k}{\tau_{k}}.
			\end{aligned}
			\]
			Therefore, we obtain
			\begin{equation}\label{eq:key-AMG-qp}
				\dual{\nabla f_j(y_{k})-\proj_{C(y_{k})}\left(w^{\rm QP}_{k+1}\right),x_{k+1}-x_k}\leq 0.
			\end{equation}
			Clearly, this also yields that
			\[
			\begin{aligned}
				\max_{1\leq j\leq m}	\dual{\nabla f_j(y_{k}),x_{k+1}-x_k}
				\leq		{}& \dual{\proj_{C(y_{k})}\left(w^{\rm QP}_{k+1}\right),x_{k+1}-x_k}\\
				\leq {}&\max_{1\leq j\leq m}	\dual{\nabla f_j(y_{k}),x_{k+1}-x_k }.
			\end{aligned}
			\]
			Consequently, we get the identity \cref{eq:key-id-imex-qp} and complete the proof.
		\end{proof}
		\begin{thm}
			\label{thm:conv-imex-qp}	
			Let $\{x_k,z_k\}_{k=0}^\infty$ be generated by the IMEX scheme \cref{eq:imex-qp}.
			If $L\tau_k^2\leq (1+\tau_k)\gamma_k$, then we have the contraction
			\begin{equation}
				\label{eq:contract-Ek-imex-qp}	
				\mathcal{E}_{k+1}(z)-\mathcal{E}_{k}(z)\leq -\tau_k\mathcal{E}_{k+1}(z),\quad k\geq 0,
			\end{equation}
			which implies that 
			\begin{equation}	\label{eq:conv-imex-qp-fk}	
				f(x_k,z)\leq\theta_k\left(f(x_0;z)+\frac{\gamma_0 }{2}\nm{z_0-z}^2\right),\quad\,k\geq 0,
			\end{equation}
			where $\theta_0=1$ and $\theta_k^{-1}=(1+\tau_0)\cdots(1+\tau_{k-1})$ for all $k\geq 1$.
		\end{thm}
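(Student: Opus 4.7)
The plan is to reduce the contraction $(1+\tau_k)\mathcal{E}_{k+1}(z) \leq \mathcal{E}_k(z)$ to a handful of pointwise inequalities following the template of the continuous Lyapunov analysis in \cref{thm:rate-L-mop-hnag}. The key subtlety, absent in single-objective accelerated methods, is the $\min_j$ inside the merit function $f(x;z)$. I would handle this asymmetrically: let $j^\star$ be any index realizing the minimum in $f(x_k;z)$, so that $\mathcal{E}_k(z) = f_{j^\star}(x_k) - f_{j^\star}(z) + \tfrac{\gamma_k}{2}\nm{z_k-z}^2$, and combine with $f(x_{k+1};z) \leq f_{j^\star}(x_{k+1}) - f_{j^\star}(z)$. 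This reduces the task to a single-function style estimate in which the descent direction is $\proj_{C(y_k)}(w^{\rm QP}_{k+1})$ rather than $\nabla f_{j^\star}(y_k)$.

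For the quadratic part, \eqref{eq:imex-qp-gk1} gives $(1+\tau_k)\gamma_{k+1} = \gamma_k + \mu\tau_k$, and \eqref{eq:imex-qp-vk1} gives $\gamma_k(z_{k+1}-z_k) = -\mu\tau_k(z_{k+1}-y_k) - \tau_k\proj_{C(y_k)}(w^{\rm QP}_{k+1})$. Substituting this into the expansion $\nm{z_{k+1}-z}^2 - \nm{z_k-z}^2 = -\nm{z_{k+1}-z_k}^2 + 2\dual{z_{k+1}-z_k,\, z_{k+1}-z}$ together with the 3-point identity $2\dual{z_{k+1}-y_k,\, z_{k+1}-z} = \nm{z_{k+1}-y_k}^2 + \nm{z_{k+1}-z}^2 - \nm{y_k-z}^2$ rewrites the quadratic increment as
\[
\tfrac{\gamma_k+\mu\tau_k}{2}\nm{z_{k+1}-z}^2 - \tfrac{\gamma_k}{2}\nm{z_k-z}^2 = -\tfrac{\gamma_k}{2}\nm{z_{k+1}-z_k}^2 - \tau_k\dual{\proj_{C(y_k)}(w^{\rm QP}_{k+1}),\, z_{k+1}-z} - \tfrac{\mu\tau_k}{2}\nm{z_{k+1}-y_k}^2 + \tfrac{\mu\tau_k}{2}\nm{y_k-z}^2.
\]
I then split the cross term via $\tau_k(z_{k+1}-x_{k+1}) = x_{k+1}-x_k$ from \eqref{eq:imex-qp-xk1} as $-\tau_k\dual{\proj_{C(y_k)}(w^{\rm QP}_{k+1}),\, z_{k+1}-z} = -\dual{\proj_{C(y_k)}(w^{\rm QP}_{k+1}),\, x_{k+1}-x_k} - \tau_k\dual{\proj_{C(y_k)}(w^{\rm QP}_{k+1}),\, x_{k+1}-z}$ and pair each piece with a function-value contribution. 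For the first piece, I combine $f(x_{k+1};z) - f(x_k;z) \leq f_{j^\star}(x_{k+1}) - f_{j^\star}(x_k)$ with $L$-smoothness and $\mu_{j^\star}$-strong convexity of $f_{j^\star}$ at $y_k$, then apply the key inequality \cref{eq:key-AMG-qp} to replace $\nabla f_{j^\star}(y_k)$ by $\proj_{C(y_k)}(w^{\rm QP}_{k+1})$ along $x_{k+1}-x_k$, obtaining $f(x_{k+1};z) - f(x_k;z) - \dual{\proj_{C(y_k)}(w^{\rm QP}_{k+1}),\, x_{k+1}-x_k} \leq \tfrac{L}{2}\nm{x_{k+1}-y_k}^2 - \tfrac{\mu_{j^\star}}{2}\nm{x_k-y_k}^2$. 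For the second piece, I write $\proj_{C(y_k)}(w^{\rm QP}_{k+1}) = \sum_j \lambda_j^k \nabla f_j(y_k)$ with $\lambda^k \in \Delta_m$, so that $\phi_k := \sum_j \lambda_j^k f_j$ is $\mu$-strongly convex and $L$-smooth with $\nabla\phi_k(y_k) = \proj_{C(y_k)}(w^{\rm QP}_{k+1})$; applying \cref{lem:gd-lem} to $\phi_k$ at $y_k$ with $x = x_{k+1}$ and $M = L$, and using $f(x_{k+1};z) \leq \phi_k(x_{k+1}) - \phi_k(z)$, yields $\tau_k f(x_{k+1};z) - \tau_k\dual{\proj_{C(y_k)}(w^{\rm QP}_{k+1}),\, x_{k+1}-z} \leq \tfrac{\tau_k L}{2}\nm{x_{k+1}-y_k}^2 - \tfrac{\tau_k\mu}{2}\nm{y_k-z}^2$.

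Summing everything, the $\nm{y_k-z}^2$ contributions cancel exactly and I obtain
\[
(1+\tau_k)\mathcal{E}_{k+1}(z) - \mathcal{E}_k(z) \leq -\tfrac{\gamma_k}{2}\nm{z_{k+1}-z_k}^2 + \tfrac{(1+\tau_k)L}{2}\nm{x_{k+1}-y_k}^2 - \tfrac{\mu\tau_k}{2}\nm{z_{k+1}-y_k}^2 - \tfrac{\mu_{j^\star}}{2}\nm{x_k-y_k}^2.
\]
Since $y_k = (x_k+\tau_k z_k)/(1+\tau_k)$ together with \eqref{eq:imex-qp-xk1} gives $x_{k+1}-y_k = \tfrac{\tau_k}{1+\tau_k}(z_{k+1}-z_k)$, the hypothesis $L\tau_k^2 \leq (1+\tau_k)\gamma_k$ collapses the first two terms to a nonpositive quantity, yielding \cref{eq:contract-Ek-imex-qp}; iterating $(1+\tau_k)\mathcal{E}_{k+1}(z) \leq \mathcal{E}_k(z)$ together with $f(x_k;z) \leq \mathcal{E}_k(z)$ then delivers \cref{eq:conv-imex-qp-fk}. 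The main obstacle is the asymmetric treatment of the $\min$: one must choose $j^\star$ via $x_k$ for the backward-difference part while using the convex combination $\lambda^k$ for the leftover $\tau_k f(x_{k+1};z)$ part, since any symmetric single-function approach would either lose the $\min$ structure or double-count the strong-convexity penalty on $\nm{y_k-z}^2$ and destroy the cancellation.
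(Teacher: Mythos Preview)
Your proof is correct and follows essentially the same route as the paper's: the same quadratic expansion via \eqref{eq:imex-qp-gk1}--\eqref{eq:imex-qp-vk1} and the three-point identity, the same splitting of $\tau_k(z_{k+1}-z)$ into $(x_{k+1}-x_k)+\tau_k(x_{k+1}-z)$, the same use of the convex-combination representation $\proj_{C(y_k)}(w^{\rm QP}_{k+1})=\sum_j\lambda_j\nabla f_j(y_k)$ for the $x_{k+1}-z$ piece, and the same final cancellation via $x_{k+1}-y_k=\frac{\tau_k}{1+\tau_k}(z_{k+1}-z_k)$ under $L\tau_k^2\le(1+\tau_k)\gamma_k$. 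The only cosmetic difference is in the $x_{k+1}-x_k$ piece: the paper bounds $f(x_{k+1};z)-f(x_k;z)\le\max_j[f_j(x_{k+1})-f_j(x_k)]$ and then invokes the identity of \cref{lem:key-id-imex-qp} to pass from $\max_j\dual{\nabla f_j(y_k),x_{k+1}-x_k}$ to $\dual{\proj_{C(y_k)}(w^{\rm QP}_{k+1}),x_{k+1}-x_k}$, whereas you fix a single minimizing index $j^\star$ at $x_k$ and use only the inequality \cref{eq:key-AMG-qp} for that index. Your variant is slightly more economical since it needs only the inequality, not the full identity; otherwise the arguments are the same.
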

		\begin{proof}
			From \cref{eq:Lk-mnag}, we have that
			\[
			\mathcal{E}_{k+1}(z)-\mathcal{E}_{k}(z)=f(x_{k+1};z)-f(x_{k};z)+\frac{\gamma_{k+1}}{2}\left\|z_{k+1}-z\right\|^{2}-\frac{\gamma_{k}}{2}\left\|z_{k}-z\right\|^{2}.
			\]
			Applying  \cref{eq:XYZ} and \eqref{eq:imex-qp-gk1} gives
			\[
			\begin{aligned}
				{}&	\frac{\gamma_{k+1}}{2}\left\|z_{k+1}-z\right\|^{2}-\frac{\gamma_{k}}{2}\left\|z_{k}-z\right\|^{2}\\
				={}&\frac{\gamma_{k+1}-\gamma_k}{2}\left\|z_{k+1}-z\right\|^{2}+\frac{\gamma_{k}}{2}\left(\left\|z_{k+1}-z\right\|^{2}-\left\|z_{k}-z\right\|^{2}\right)\\
				={}&\frac{\mu\tau_{k}-\tau_{k}\gamma_{k+1}}{2}\left\|z_{k+1}-z\right\|^{2}+\gamma_{k}\dual{z_{k+1}-z_{k},z_{k+1}-z} -\frac{\gamma_{k}}{2}\left\|z_{k+1}-z_{k}\right\|^{2}.	
			\end{aligned}
			\]
			By \eqref{eq:imex-qp-vk1}, we expand the cross term as follows
			\[
			\begin{aligned}
				\gamma_{k}\dual{z_{k+1}-z_{k},z_{k+1}-z}
				={}&\dual{\mu\tau_{k}(y_{k}-z_{k+1})-\tau_{k}\proj_{C(y_{k})}\left(w^{\rm QP}_{k+1}\right),z_{k+1}-z}\\
				={}&\frac{\mu\tau_k}{2}\left(\nm{y_{k}-z}^2-\nm{z_{k+1}-z}^2-\nm{y_{k}-z_{k+1}}^2\right)\\
				{}&\quad-\tau_{k}\dual{\proj_{C(y_{k})}\left(w^{\rm QP}_{k+1}\right),z_{k+1}-z}.
			\end{aligned}
			\]
			Thanks to \eqref{eq:imex-qp-xk1} and \cref{lem:key-id-imex-qp}, it holds that
			\[
			\begin{aligned}
				{}&	- \tau_{k}\left\langle\proj_{C(y_{k})}\left(w^{\rm QP}_{k+1}\right),z_{k+1}-z\right\rangle\\
				={}&-\left\langle\proj_{C(y_{k})}\left(w^{\rm QP}_{k+1}\right),x_{k+1}-x_{k}\right\rangle-\tau_{k}\left\langle\proj_{C(y_{k})}\left(w^{\rm QP}_{k+1}\right),x_{k+1}-z\right\rangle\\
				={}&			
				-				\max_{1\leq j \leq m}\dual{\nabla f_j(y_k),x_{k+1}-x_k}-\tau_{k}\left\langle\proj_{C(y_{k})}\left(w^{\rm QP}_{k+1}\right),x_{k+1}-z\right\rangle. 
			\end{aligned}
			\]
			Collecting the above results gives
			\begin{equation}\label{eq:diff-Ek}
				\begin{aligned}
					\mathcal{E}_{k+1}(z)-\mathcal{E}_{k}(z)={}&f(x_{k+1};z)-f(x_{k};z)	-				\max_{1\leq j \leq m}\dual{\nabla f_j(y_k),x_{k+1}-x_k}\\
					{}&\quad+ \frac{\mu\tau_k}{2}\nm{y_{k}-z}^2-\tau_{k}\left\langle\proj_{C(y_{k})}\left(w^{\rm QP}_{k+1}\right),x_{k+1}-z\right\rangle\\
					{}&\quad -\frac{\tau_{k}\gamma_{k+1}}{2}\left\|z_{k+1}-z\right\|^{2} -\frac{\gamma_{k}}{2}\left\|z_{k+1}-z_{k}\right\|^{2}-\frac{\mu\tau_k}{2}\nm{y_{k}-z_{k+1}}^2.
				\end{aligned}
			\end{equation}
			
			Let us focus on the first and the second lines in \cref{eq:diff-Ek}. Notice that by the Lipschitz continuity of the gradients, we have 
			\begin{equation}\label{eq:key-gd}
				-\dual{f_j(y_k),x_{k+1}-y_k}\leq	f_j(y_k)-f_j(x_{k+1})+\frac{L_j}{2}\nm{x_{k+1}-y_k}^2,
			\end{equation}
			and using \cref{lem:gd-lem} gives 
			\[
			\begin{aligned}
				f_j(x_{k+1})-f_j(x_k)\leq {}&\dual{\nabla f_j(y_k),x_{k+1}-x_k}-\frac{\mu_j}{2}\nm{y_k-z_k}^2+\frac{L_j}{2}\nm{x_{k+1}-y_k}^2,\\
				-\dual{\nabla f_j(y_k),x_{k+1}-z}\leq{}&f_j(z)-f_j(x_{k+1})-\frac{\mu_j}{2}\nm{y_k-z}^2+\frac{L_j}{2}\nm{x_{k+1}-y_k}^2.
			\end{aligned}
			\]
			This implies further that
			\[
			\begin{aligned}
				{}&f(x_{k+1};z)-f(x_k;z) 
				\leq 		\max_{1\leq j \leq m}\left[f_j(x_{k+1})-f_j(x_k)\right]\\
				\leq  {}&\max_{1\leq j \leq m}\dual{\nabla f_j(y_k),x_{k+1}-x_k}+\frac{L}{2}\nm{x_{k+1}-y_k}^2-\frac{\mu}{2}\nm{y_k-z_k}^2.
			\end{aligned}
			\]
			Assume $\proj_{C(y_{k})}\left(w^{\rm QP}_{k+1}\right)
			={}\sum_{j=1}^{m}\lambda_j\nabla f_j(y_k)$ with some $\lambda\in\Delta_m$. Then it follows 
			\[
			\begin{aligned}
				{}& -\left\langle\proj_{C(y_{k})}\left(w^{\rm QP}_{k+1}\right),x_{k+1}-z\right\rangle
				={}-\sum_{j=1}^{m}\lambda_j\dual{\nabla f_j(y_k),x_{k+1}-z}\\
				\leq {}&-\min_{1\leq j\leq m}\left[f_j(x_{k+1})-f_j(z)\right]+\frac{L}{2}\nm{x_{k+1}-y_k}^2-\frac{\mu}{2}\nm{y_k-z}^2.
			\end{aligned}
			\]
			Consequently, putting all together leads to
			\[
			\begin{aligned}
				\mathcal{E}_{k+1}(z)-\mathcal{E}_{k}(z)
				\leq &-\tau_k	\mathcal{E}_{k+1}(z)-\frac{\gamma_{k}}{2}\left\|z_{k+1}-z_{k}\right\|^{2}+\frac{L(1+\tau_k)}{2}\nm{x_{k+1}-y_k}^2.
			\end{aligned}
			\]
			In view of the condition $L\tau_k^2\leq \gamma_k(1+\tau_k)$ and the relation (cf.\eqref{eq:imex-qp-xk1})
			\[
			x_{k+1}-y_k = \frac{x_k+\tau_k z_{k+1}}{1+\tau_k}-\frac{x_k+\tau_k z_k}{1+\tau_k} =\frac{ \tau_k(z_{k+1}-z_k)}{1+\tau_k},
			\]
			we 
			obtain \cref{eq:contract-Ek-imex-qp} and completes the proof of this theorem.
		\end{proof}
		\subsection{Convergence rate of the merit function}
		\label{sec:imex-qp-rate}
		The Lyapunov contraction \cref{thm:conv-imex-qp} is not enough for measuring the optimality. To establish the convergence rate of the merit function \cref{eq:merit}, we shall prove the boundness result of the discrete sequence. 
		
		By \cref{eq:key-AMG-qp} we have
		\[
		\dual{\nabla f_j(y_{k})-\proj_{C(y_{k})}\left(w^{\rm QP}_{k+1}\right),x_{k+1}-x_k}\leq 0,
		\]
		for all $1\leq j\leq m$. This together with \cref{lem:gd-lem} gives
		\[
		f_j(x_{k+1})-f_j(x_{k})+\frac{\mu_j}{2}\nm{y_k-x_k}^2-\frac{L_j}{2}\nm{x_{k+1}-y_k}^2\leq 	\dual{\proj_{C(y_{k})}\left(w^{\rm QP}_{k+1}\right),x_{k+1}-x_k}.
		\]
		In view of \cref{eq:imex-qp}, we have 
		\[
		\begin{aligned}
			{}&	\dual{\proj_{C(y_{k})}\left(w^{\rm QP}_{k+1}\right),x_{k+1}-x_k}
			={}	\dual{\mu(y_k-z_{k+1})-	\gamma_k\dfrac{z_{k+1}-z_k}{\tau_k},x_{k+1}-x_k}\\
			={}&\mu\dual{y_k-x_{k+1}-\frac{x_{k+1}-x_k}{\tau_k},x_{k+1}-x_k}\\
			{}&\quad-\frac{\gamma_k}{\tau_k}\dual{x_{k+1}-x_k+\frac{x_{k+1}-x_k}{\tau_k}-\frac{x_{k}-x_{k-1}}{\tau_{k-1}},x_{k+1}-x_k}\\
			={}&\frac{\mu}{2}\nm{y_k-x_k}^2-\frac{\mu}{2}\nm{x_{k+1}-x_k}^2-\frac{\mu}{2}\nm{x_{k+1}-y_k}^2-\frac{\mu+\gamma_k}{\tau_k}\nm{x_{k+1}-x_k}^2\\
			{}&\quad+\frac{\gamma_k}{2}\left(\nm{\frac{x_k-x_{k-1}}{\tau_{k-1}}}^2-
			\nm{\frac{x_{k+1}-x_{k}}{\tau_{k}}}^2-\nm{\frac{x_{k+1}-x_{k}}{\tau_{k}}-\frac{x_k-x_{k-1}}{\tau_{k-1}}}^2\right)\\
			={}&\frac{\mu}{2}\nm{y_k-x_k}^2-\frac{\mu}{2}\nm{x_{k+1}-y_k}^2-\frac{(2+\tau_k)\mu+2\gamma_k}{2\tau_k}\nm{x_{k+1}-x_k}^2\\
			{}&\quad+\frac{\gamma_k}{2}\left(\nm{\frac{x_k-x_{k-1}}{\tau_{k-1}}}^2-
			\nm{\frac{x_{k+1}-x_{k}}{\tau_{k}}}^2-\nm{z_{k+1}-z_k+x_k-x_{k+1}}^2\right).
		\end{aligned}
		\]
		Therefore, plugging this into the previous estimate leads to
		\[
		\begin{aligned}
			{}&f_j(x_{k+1})-f_j(x_{k})+\frac{\gamma_{k+1}}{2}\nm{\frac{x_{k+1}-x_{k}}{\tau_{k}}}^2-\frac{\gamma_k}{2}\nm{\frac{x_k-x_{k-1}}{\tau_{k-1}}}^2\\
			\leq {}&\frac{L_j-\mu}{2}\nm{x_{k+1}-y_k}^2-\frac{(1+\tau_k)\mu+2\gamma_k+\gamma_{k+1}}{2\tau_k}\nm{x_{k+1}-x_k}^2\\
			{}&\quad-\frac{\gamma_k}{2}\nm{z_{k+1}-z_k+x_k-x_{k+1}}^2+\frac{\mu-\mu_j}{2}\nm{y_k-x_k}^2\\
			={}&\frac{(L_j-\mu)\tau_k^2}{2(1+\tau_k)^2}\nm{z_{k+1}-z_k}^2-\frac{(1+\tau_k)\mu+2\gamma_k+\gamma_{k+1}}{2\tau_k}\nm{x_{k+1}-x_k}^2\\
			{}&\quad-\frac{\gamma_k}{2}\nm{z_{k+1}-z_k+x_k-x_{k+1}}^2+\frac{\mu-\mu_j}{2}\nm{y_k-x_k}^2.
		\end{aligned}
		\]
		Since $\mu\leq\mu_j$ and $(L_j-\mu)\tau_k^2\leq L\tau_{k}^2=\gamma_k(1+\tau_k)$, a direct calculation gives
		\[
		\begin{aligned}
			{}&f_j(x_{k+1})-f_j(x_{k})+\frac{\gamma_{k+1}}{2}\nm{\frac{x_{k+1}-x_{k}}{\tau_{k}}}^2-\frac{\gamma_k}{2}\nm{\frac{x_k-x_{k-1}}{\tau_{k-1}}}^2\\
			\leq{}&\frac{\gamma_k}{2(1+\tau_k)}\nm{z_{k+1}-z_k}^2-\frac{\gamma_k}{2}\nm{z_{k+1}-z_k+x_k-x_{k+1}}^2\\
			{}&\quad-\frac{(1+\tau_k)\mu+2\gamma_k+\gamma_{k+1}}{2\tau_k}\nm{x_{k+1}-x_k}^2\\
			={}&-\frac{\gamma_k\tau_k}{2(1+\tau_k)}\nm{z_{k+1}-z_k}^2-\gamma_k\dual{z_{k+1}-z_k,x_k-x_{k+1}}\\
			{}&\quad-\frac{(1+\tau_k)\mu+(2+\tau_k)\gamma_k+\gamma_{k+1}}{2\tau_k}\nm{x_{k+1}-x_k}^2\\
			\leq 	{}&-\frac{(1+\tau_k)\mu+\gamma_k+\gamma_{k+1}}{2\tau_k}\nm{x_{k+1}-x_k}^2.
		\end{aligned}
		\]
		
		From the above, we conclude the following energy inequality, which together with \cref{lem:u0-,eq:contract-Ek-imex-qp} proves the desired estimate of the merit function.
		\begin{lem}
			\label{lem:energy-ineq-dis}
			Let $\{x_k,z_k\}_{k=0}^\infty$ be generated by \cref{algo:AMG-QP}. Then for any $1\leq j\leq m$,  the energy inequality holds true
			\begin{equation}\label{eq:energy-ineq-dis}
				f_j(x_{k})+\frac{\gamma_{k}}{2}\nm{\frac{x_{k}-x_{k-1}}{\tau_{k}}}^2+\sum_{i=1}^{k-1}\frac{(1+\tau_i)\mu+\gamma_i+\gamma_{i+1}}{2\tau_i}\nm{x_{i+1}-x_i}^2\leq C_j(x_0,x_1,\tau_0),
			\end{equation}
			for all $k\geq 1$, where $C_j(x_0,x_1,\tau_0):=f_j(x_1)+\frac{\gamma_0}{2}\nm{(x_1-x_0)/\tau_0}^2$.
		\end{lem}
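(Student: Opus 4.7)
The proof plan is essentially to collect the per-step inequality already carried out in the block of computations immediately preceding the lemma statement and to close it by a telescoping summation. Concretely, the ingredients are: (a) the orthogonality-type bound \cref{eq:key-AMG-qp}, (b) the local three-point expansion provided by \cref{lem:gd-lem}, (c) the identity \cref{eq:XYZ} applied to the pair $(x_{k+1},x_k,x_{k-1})$ so as to produce square-norm telescoping terms, and (d) the step-size relation $L\tau_k^2 \le \gamma_k(1+\tau_k)$ combined with $x_{k+1}-y_k=\tau_k(z_{k+1}-z_k)/(1+\tau_k)$. Together these yield, for each $1\le j\le m$ and each $k\ge 1$, the one-step inequality
\[
f_j(x_{k+1})-f_j(x_k)+\frac{\gamma_{k+1}}{2}\nm{\tfrac{x_{k+1}-x_k}{\tau_k}}^2-\frac{\gamma_k}{2}\nm{\tfrac{x_k-x_{k-1}}{\tau_{k-1}}}^2 \le -\frac{(1+\tau_k)\mu+\gamma_k+\gamma_{k+1}}{2\tau_k}\nm{x_{k+1}-x_k}^2.
\]

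The second step is to sum this inequality from $i=1$ to $i=k-1$. The telescoping collapses the kinetic terms and the $f_j$ terms, leaving on the left exactly the energy functional appearing in \cref{eq:energy-ineq-dis}, and on the right the negated dissipation sum, plus the ``initial'' contribution $f_j(x_1)+\tfrac{\gamma_1}{2}\nm{(x_1-x_0)/\tau_0}^2$. The third step is to absorb this initial contribution into $C_j(x_0,x_1,\tau_0)$: using the update $\gamma_{k+1}=(\gamma_k+\mu\tau_k)/(1+\tau_k)$, whenever $\gamma_0\ge\mu$ one has $\gamma_1\le \gamma_0$, which gives $\tfrac{\gamma_1}{2}\nm{(x_1-x_0)/\tau_0}^2 \le \tfrac{\gamma_0}{2}\nm{(x_1-x_0)/\tau_0}^2$ and the claimed constant is recovered.

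The expected main obstacle is the careful bookkeeping of the velocity term $(x_k-x_{k-1})/\tau_{k-1}$: the per-step inequality mixes two different step sizes $\tau_k$ and $\tau_{k-1}$, so the telescoping must be aligned index-by-index. In particular, one has to check that the coefficients on the mixed squared-norm terms $\nm{z_{k+1}-z_k}^2$, $\nm{z_{k+1}-z_k+x_k-x_{k+1}}^2$ and $\nm{x_{k+1}-x_k}^2$ combine with a nonpositive overall sign; this is where the identity $(L_j-\mu)\tau_k^2\le \gamma_k(1+\tau_k)$ is invoked and where the Young-type decomposition $-\gamma_k\dual{z_{k+1}-z_k,x_k-x_{k+1}}\le \tfrac{\gamma_k}{2}\nm{z_{k+1}-z_k}^2+\tfrac{\gamma_k}{2}\nm{x_{k+1}-x_k}^2$ makes the final dissipation rate $((1+\tau_k)\mu+\gamma_k+\gamma_{k+1})/(2\tau_k)$ sharp. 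Once these algebraic simplifications are verified, the lemma follows as a direct consequence of the telescoping.
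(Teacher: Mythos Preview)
Your plan is essentially the paper's own argument: derive the one-step energy inequality from \cref{eq:key-AMG-qp} and \cref{lem:gd-lem}, expand the projection term via the scheme \cref{eq:imex-qp}, simplify using $L\tau_k^2=\gamma_k(1+\tau_k)$, and telescope. The per-step inequality you write down is exactly the one the paper obtains right before stating the lemma.

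One concrete slip: the unweighted Young bound you quote,
\[
-\gamma_k\dual{z_{k+1}-z_k,x_k-x_{k+1}}\le \tfrac{\gamma_k}{2}\nm{z_{k+1}-z_k}^2+\tfrac{\gamma_k}{2}\nm{x_{k+1}-x_k}^2,
\]
is not tight enough. Plugging it in leaves a residual term $\tfrac{\gamma_k}{2(1+\tau_k)}\nm{z_{k+1}-z_k}^2>0$ that does not cancel against $-\tfrac{\gamma_k\tau_k}{2(1+\tau_k)}\nm{z_{k+1}-z_k}^2$. The paper uses the weighted version
\[
\gamma_k\dual{z_{k+1}-z_k,x_{k+1}-x_k}\le \frac{\gamma_k\tau_k}{2(1+\tau_k)}\nm{z_{k+1}-z_k}^2+\frac{\gamma_k(1+\tau_k)}{2\tau_k}\nm{x_{k+1}-x_k}^2,
\]
which exactly kills the $\nm{z_{k+1}-z_k}^2$ term and produces the stated dissipation coefficient $\tfrac{(1+\tau_k)\mu+\gamma_k+\gamma_{k+1}}{2\tau_k}$. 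With this correction your argument goes through.

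Your observation that the telescoping naturally lands on $\gamma_1$ rather than $\gamma_0$ in the initial constant, and that $\gamma_1\le\gamma_0$ requires $\gamma_0\ge\mu$, is a point the paper does not make explicit; it is a fair caveat.
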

		\begin{thm}\label{thm:exp-merit-dis}
			Let $\{x_k,z_k\}_{k=0}^\infty$ be generated by \cref{algo:AMG-QP}, then 
			\begin{equation}\label{eq:exp-merit-dis}
				u_0(x_k)
				\leq{}		\left[u_0(x_0)+\gamma_0\nm{z_0}^2+\gamma_0S^2(\alpha)\right]\times  \min\left\{\frac{4L}{\gamma_0k^2},\,\left(1+\sqrt{\min\{\mu,\gamma_0\}/L}\right)^{-k}\right\},
			\end{equation}
			for all $k\geq 1$, where $	S(\alpha)=\sup_{F^*\in F(P_w\cap \mathcal L_F(\alpha))}\inf_{z\in F^{-1}(F^*)}\nm{z}<+\infty$ with $\alpha=F(x_1)+\gamma_0/2\nm{(x_1-x_0)/\tau_0}^2$.
		\end{thm}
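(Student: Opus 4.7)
The plan is to fuse the discrete Lyapunov contraction of \cref{thm:conv-imex-qp} with the restricted merit-function representation of \cref{lem:u0-}, use the energy inequality of \cref{lem:energy-ineq-dis} to keep the iterates inside $\mathcal L_F(\alpha)$, and finally translate the decay factor $\theta_k$ into the asserted minimum of polynomial and geometric rates. This parallels the continuous argument in \cref{thm:exp-merit}.

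First, from \cref{lem:energy-ineq-dis}, discarding the non-negative sum and kinetic term gives $f_j(x_k)\leq f_j(x_1)+\gamma_0\nm{(x_1-x_0)/\tau_0}^2/2=\alpha_j$ for each $1\leq j\leq m$ and every $k\geq 1$, so $x_k\in\mathcal L_F(\alpha)$. Under \cref{assum:alpha-pw}, part (ii) of \cref{lem:u0-} then rewrites the merit function as
\[
u_0(x_k)=\sup_{F^*\in F(P_w\cap \mathcal L_F(\alpha))}\inf_{z\in F^{-1}(F^*)}f(x_k;z).
\]
Iterating the contraction $\mathcal E_{k+1}(z)\leq \mathcal E_k(z)/(1+\tau_k)$ from \cref{thm:conv-imex-qp} gives $f(x_k;z)\leq \theta_k\mathcal E_0(z)$ for every $z\in\R^n$; applying $\nm{z_0-z}^2\leq 2\nm{z_0}^2+2\nm{z}^2$ together with the crude bound $f(x_0;z)\leq u_0(x_0)$ yields
\[
f(x_k;z)\leq \theta_k\bigl(u_0(x_0)+\gamma_0\nm{z_0}^2+\gamma_0\nm{z}^2\bigr).
\]
Taking the infimum over $z\in F^{-1}(F^*)$ acts only on the $\gamma_0\nm{z}^2$ term and invokes $\inf_{z\in F^{-1}(F^*)}\nm{z}^2\leq S^2(\alpha)<+\infty$ from \cref{lem:u0-}(i); the subsequent supremum over $F^*\in F(P_w\cap \mathcal L_F(\alpha))$ produces
\[
u_0(x_k)\leq \theta_k\bigl(u_0(x_0)+\gamma_0\nm{z_0}^2+\gamma_0 S^2(\alpha)\bigr).
\]

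The main obstacle is therefore the two-sided bound on $\theta_k=\prod_{i=0}^{k-1}(1+\tau_i)^{-1}$. The step-size relation $L\tau_k^2=\gamma_k(1+\tau_k)$ immediately gives $\tau_k\geq\sqrt{\gamma_k/L}$, and a short case analysis of the recursion $\gamma_{k+1}=(\gamma_k+\mu\tau_k)/(1+\tau_k)$ shows that $\{\gamma_k\}$ is monotone and confined to $[\min\{\gamma_0,\mu\},\max\{\gamma_0,\mu\}]$; consequently $\tau_k\geq\sqrt{\min\{\gamma_0,\mu\}/L}$, which delivers the geometric factor $(1+\sqrt{\min\{\mu,\gamma_0\}/L})^{-k}$. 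For the polynomial factor I would first observe $\theta_{k+1}/\gamma_{k+1}=\theta_k/(\gamma_k+\mu\tau_k)\leq\theta_k/\gamma_k$, hence $\theta_k\leq\gamma_k/\gamma_0$; then, substituting $\tilde\gamma_k:=\gamma_k-\mu$ (whose sign is preserved along the iteration) reduces the recursion to the pure-convex template $\tilde\gamma_{k+1}(1+\tau_k)=\tilde\gamma_k$ with $L\tau_k^2\geq\tilde\gamma_k(1+\tau_k)$, on which the classical Nesterov/FISTA telescoping $1/\sqrt{\tilde\gamma_{k+1}}-1/\sqrt{\tilde\gamma_k}\geq 1/(2\sqrt{L})$ yields a $1/k^2$ bound that, combined with $\theta_k\leq\gamma_k/\gamma_0$ and the already-established geometric rate dominating wherever $\mu$ and $\gamma_0$ are comparable, closes $\theta_k\leq 4L/(\gamma_0 k^2)$. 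Reconciling the two monotonicity regimes $\gamma_0\gtreqless\mu$ and the degenerate case $\mu=0$ uniformly is where the step-size accounting requires the most care.
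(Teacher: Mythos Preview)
Your reduction of $u_0(x_k)$ to a bound of the form $\theta_k\,[u_0(x_0)+\gamma_0\nm{z_0}^2+\gamma_0 S^2(\alpha)]$ is exactly the paper's argument: it combines \cref{lem:energy-ineq-dis} (to place $x_k\in\mathcal L_F(\alpha)$), \cref{lem:u0-}(ii), and \cref{thm:conv-imex-qp} in the same way. The paper then disposes of the $\theta_k$ estimate in one line by invoking \cite[Lemma~B2]{luo_differential_2021}, whereas you try to derive it directly; your geometric bound via $\tau_k\geq\sqrt{\gamma_k/L}\geq\sqrt{\min\{\mu,\gamma_0\}/L}$ is correct and is presumably what that lemma does.

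The polynomial-rate argument, however, has a genuine gap when $\mu>0$. Your $\tilde\gamma_k:=\gamma_k-\mu$ telescoping only yields $\tilde\gamma_k\leq 4L/k^2$, hence via $\theta_k\leq\gamma_k/\gamma_0$ merely $\theta_k\leq 4L/(\gamma_0 k^2)+\mu/\gamma_0$; the residual $\mu/\gamma_0$ does not vanish, and the geometric rate cannot be used to ``fill in'' because the statement demands $\theta_k\leq 4L/(\gamma_0 k^2)$ for every $k\geq 1$, not just asymptotically. (For $\gamma_0<\mu$ the situation is worse: $\tilde\gamma_k<0$ and the square-root telescoping is undefined.)

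The fix uses the very inequality you already isolated, $\gamma_k\geq\gamma_0\theta_k$, but applies the telescoping to $\theta_k$ itself rather than to $\gamma_k$. From $L\tau_k^2=\gamma_k(1+\tau_k)\geq\gamma_0\theta_k(1+\tau_k)$ and $\theta_{k+1}(1+\tau_k)=\theta_k$ one gets, with $a_k:=1/\sqrt{\theta_k}$,
\[
a_{k+1}-a_k=\frac{a_k\tau_k}{\sqrt{1+\tau_k}+1}\geq\frac{\sqrt{\gamma_0/L}\,\sqrt{1+\tau_k}}{\sqrt{1+\tau_k}+1}\geq\frac{1}{2}\sqrt{\gamma_0/L},
\]
uniformly in $\mu\geq 0$ and in the sign of $\gamma_0-\mu$. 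Summing gives $a_k\geq 1+\tfrac{k}{2}\sqrt{\gamma_0/L}$, i.e.\ $\theta_k\leq 4L/(2\sqrt{L}+k\sqrt{\gamma_0})^2\leq 4L/(\gamma_0 k^2)$, which is precisely the bound the paper quotes from \cite{luo_differential_2021}.
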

		\begin{proof}
			From \cref{eq:energy-ineq-dis} we claim that $\{x_k\}_{k=1}^\infty\subset\mathcal L_F(\alpha)$. Hence, by \cref{lem:u0-,eq:conv-imex-qp-fk}, we have
			\[
			\begin{aligned}
				u_0(x_k)={}&\sup_{F^*\in F(P_w\cap \mathcal L_F(\alpha))}\inf_{z\in F^{-1}(F^*)}f(x_k;z)\\
				\leq{}&\theta_k\sup_{F^*\in F(P_w\cap \mathcal L_F(\alpha))}\inf_{z\in F^{-1}(F^*)}\left(f(x_0;z)+\frac{\gamma_0 }{2}\nm{z_0-z}^2 \right)\\
				\leq{}&\theta_k\left[u_0(x_0)+\gamma_0\nm{z_0}^2+\gamma_0S^2(\alpha)\right].
			\end{aligned}
			\]
			It remains to establish the decay rate of $\theta_k^{-1}=(1+\tau_0)\cdots(1+\tau_{k-1})$. Utilizing \cite[Lemma B2]{luo_differential_2021} gives that 
			\[
			\theta_k\leq \min\left\{\frac{4L}{(2\sqrt{L}+\sqrt{\gamma_0}k)^2},\,\left(1+\sqrt{\min\{\mu,\gamma_0\}/L}\right)^{-k}\right\}.
			\]
			This completes the proof of this theorem.
		\end{proof}

		\section{Backtracking and Adaptive Restart}
		\label{sec:back-res}
		For accelerated methods, the gradient Lipschitz constant $L$ and the strong convexity parameter $\mu$ are important for both practical implementation and theoretical analysis. Although linear rate has been established in \cref{thm:exp-merit-dis} for strongly convex problems, it requires the priori knowledge about $\mu$, which is usually more difficult to estimate than $L$. Otherwise, in most cases, accelerated methods with $\mu=0$ will lead to high oscillation; see the numerical results in \cref{sec:num-res} and \cite[Section 7]{Tanabe2023a}.
		
		In this section, we propose two variants of \cref{algo:AMG-QP} to handle these issues. The first adopts the standard backtracking technique, which provides useful estimate of the local Lipschitz constant. The second uses two adaptive restart strategies, to get rid of the dependence on $\mu$ and reduce the drawback of oscillation.
		\subsection{Backtracking}
		Given $\gamma_0,L>0,\,\mu\geq0$ and $x_0,z_0\in\R^n$ , let 
		\[
		(\tau^+,\gamma^+,x^+,y^+,z^+)=\texttt{AMG-QP}(\mu,L,\gamma_0,x_0,z_0)
		\]
		 be the output of \cref{algo:AMG-QP} with $one$ iteration $K=1$. In \cref{algo:AMG-QP-line}, we give an implementable variant of \cref{algo:AMG-QP} with backtracking. Following \cite[Section 3.1]{Luo2024ai}, under \cref{assum:Lj-muj}, it is not hard to prove that for any $k\in\mathbb N$, the total number of backtracking steps is finite.
		\begin{algorithm}[H]
			\caption{AMG-QP with backtracking}
			\label{algo:AMG-QP-line}
			\begin{algorithmic}[1] 
				\REQUIRE  	  Problem parameters: $M_0>0$ and $\mu\geq 0$.\\
				~~~~~~~Initial values: $	x_0,z_0\in\R^n$ and $\gamma_0>0$.\\		
				\FOR{$k=0,1,\cdots$}
				\STATE Set $i = 0,\,M_{k,0} = M_{k}$.
				\STATE Compute $(\tau^+,\gamma^+,x^+,y^+,z^+)=\texttt{AMG-QP}(\mu,M_{k,i},\gamma_k,x_k,z_k)$. 
				\STATE Compute $\delta_j=f_j(x^+)-f_j(y^+)-\dual{\nabla f_j(y^+),x^+-y^+}$ for $1\leq j\leq m$.
				\WHILE[Backtracking]{$\max\{\delta_j/M_{k,i}:1\leq j\leq m\}>	\frac{1}{2}\nm{x^+-y^+}^2$}			
				\STATE Update $i = i + 1$ and $M_{k,i} = 2^{i}M_{k,0}$.				
				\STATE Compute $(\tau^+,\gamma^+,x^+,y^+,z^+)=\texttt{AMG-QP}(\mu,M_{k,i},\gamma_k,x_k,z_k)$.
				\ENDWHILE		
				\STATE Set $i_k = i,\,\tau_k =  \tau^+,\,M_{k+1} = M_{k,i_k}$.		
				\STATE Update $\displaystyle \gamma_{k+1} = \gamma^+,x_{k+1} =x^+,\,z_{k+1} =z^+$. 
				\ENDFOR
			\end{algorithmic}
		\end{algorithm}
		For any fixed $k\in\mathbb N$, given the current state $\{M_k,\gamma_k,x_k,z_k\}$, the  backtracking procedure returns $\{M_{k+1},\gamma_{k+1},x_{k+1},y_k,z_{k+1}\}$ that satisfies
		\[
		\left\{
		\begin{aligned}
			{}&	\dfrac{\gamma_{k+1}-\gamma_k}{\tau_k}={}\mu-\gamma_{k+1},\\	 
			{}&	\dfrac{x_{k+1}-x_k}{\tau_k}={}z_{k+1}-x_{k+1},\\ 
			{}&	\gamma_k\dfrac{z_{k+1}-z_k}{\tau_k}={}\mu(y_k-z_{k+1})-\proj_{C(y_k)}\left(w^{\rm QP}_{k+1}\right),
		\end{aligned}
		\right.
		\]
		where 
		\[
		\begin{aligned}
			y_k ={}& (x_k+\tau_kz_k)/(1+\tau_k),\\	
			\tau_k = {}&(\gamma_k+\sqrt{\gamma_k^2+4M_{k+1}\gamma_k})/(2M_{k+1}),\\
			w^{\rm QP}_{k+1} ={}& -\mu(z_{k+1}-y_k)+\gamma_k\dfrac{x_{k+1}-x_k}{\tau_k}-\gamma_k\dfrac{z_{k+1}-z_k}{\tau_k}.
		\end{aligned}
		\]
		Additionally, $(x_{k+1},y_k,M_{k+1})$ satisfies a key inequality like \cref{eq:key-gd}:
		\[
		-\dual{f_j(y_k),x_{k+1}-y_k}\leq	f_j(y_k)-f_j(x_{k+1})+\frac{M_{k+1}}{2}\nm{x_{k+1}-y_k}^2,
		\]
		for all $1\leq j\leq m$. Hence, we claim that the Lyapunov contraction (cf.\cref{thm:conv-imex-qp}) and the energy decay (cf.\cref{lem:energy-ineq-dis}) can be proved similarly. Consequently, the fast convergence rate (cf.\cref{thm:exp-merit-dis}) of \cref{algo:AMG-QP-line} follows immediately. For simplicity, we omit the details.
	\subsection{Adaptive restart}
	\label{sec:adp-res}
	To overcome the oscillation of acceleration methods for single objective problems $(m=1)$, the following adaptive restart techniques are well known:
	\begin{itemize}
		\item {\bf Function restart (FR)} by O'Donoghue and Cand\`{e}s \cite[Section 3.2]{ODonoghue2015}: 
		\begin{equation}\label{eq:FR}
			\tag{FR}
			\frac{\dd }{\dd t}F(x(t))>0\quad\Longleftrightarrow\quad
			F(x_{k+1})>F(x_k).
		\end{equation}
		\vskip0.1cm
		\item {\bf Gradient restart (GR)} by O'Donoghue and Cand\`{e}s \cite[Section 3.2]{ODonoghue2015}: 
		\begin{equation}\label{eq:GR}
			\tag{GR}
			\frac{\dd }{\dd t}F(x(t))=	\dual{\nabla F(x(t)),x'(t)}>0
			\quad\approx\quad
			\dual{\nabla F(y_k),x_{k+1}-x_{k}}>0,
		\end{equation}
		where $y_k$ is the extrapolation term.
		\vskip0.1cm
		\item {\bf Speed restart (SR)} by Su et al. \cite[Section 5]{su_dierential_2016}:
		\begin{equation}\label{eq:SR}
			\tag{SR}
			\frac{\dd}{\dd t}\nm{x'(t)}^2<0\quad\Longleftrightarrow\quad 	\nm{x_{k+1}-x_k}<\nm{x_{k}-x_{k-1}}.
		\end{equation}
	\end{itemize}
	For multiobjective problems, except \cref{eq:SR}, it seems not easy to use \cref{eq:FR,eq:GR}, since there are more than one objectives and gradients. However, as mentioned in \cite{Maulen2023a}, \cref{eq:SR} is not much effective and requires proper warm start. Motivated by the efficient {\bf Gradient norm restart (GnR)} \cite[Page 24]{Luoode2opt2024} 
	\begin{equation}\label{eq:GnR}
		\tag{GnR}
		\frac{\dd}{\dd t}\nm{\nabla F(x(t))}^2>0
		\quad\Longleftrightarrow\quad \nm{\nabla F(x_{k+1})}>\nm{\nabla F(x_{k})},
	\end{equation}
	we propose a novel {\bf Residual restart (ResR)} technique:
	\begin{equation}\label{eq:ResR}
		\tag{ResR}
		\frac{\dd}{\dd t}\nm{\proj_{C(x(t))}(0)}^2>0
		\quad\Longleftrightarrow\quad 
		\nm{\proj_{C(x_{k+1})}(0)}>	\nm{\proj_{C(x_{k})}(0)}.
	\end{equation}
	In other words, we restart the algorithm whenever the KKT residual $\|\proj_{C(x_{k})}(0)\|$ is no longer decreasing.
	
	A combination of \cref{algo:AMG-QP-line} with \cref{eq:SR,eq:ResR} is summarized in \cref{algo:AMG-QP-line-res}. Numerical results in \cref{sec:num} show that both two adaptive      restart schemes work very well and \cref{eq:ResR} is more efficient than \cref{eq:SR}. The linear convergence (competitive with using strong convexity parameter) is observed but the rigorous justification deserves further investigations. For \cref{eq:SR}, we see existing linear rate  of function value in the continuous level for single objective problems \cite[Section 5]{su_dierential_2016} and \cite{Maulen2023a}, but it is unclear for the multiobjective case. As for \cref{eq:GnR,eq:ResR}, the theoretical linear rate also remains open and the analysis is much more difficult since there involves the Hessian information.
	\begin{algorithm}
		\caption{AMG-QP with backtracking and restart}
		\label{algo:AMG-QP-line-res}
		\begin{algorithmic}[1] 
			\REQUIRE  	  Problem parameters: $M_0>0$ and $\mu= 0$.\\
			~~~~~~~Initial values: $	x_0,z_0\in\R^n$ a
			nd $\gamma_0>0$.\\		
			\FOR{$k=0,1,\cdots$}
			\STATE Set $i = 0,\,M_{k,0} = M_{k}$.
			\STATE Compute $(\tau^+,\gamma^+,x^+,y^+,z^+)=\texttt{AMG-QP}(\mu,M_{k,i},\gamma_k,x_k,z_k)$.
			\STATE Compute $\delta_j=f_j(x^+)-f_j(y^+)-\dual{\nabla f_j(y^+),x^+-y^+}$ for $1\leq j\leq m$.\label{algo:sub-uapd}
			\WHILE[Backtracking]{$\max\{\delta_j/M_{k,i}:1\leq j\leq m\}>	\frac{1}{2}\nm{x^+-y^+}^2$}		
			\label{algo:line-search}		
			\STATE Update $i = i + 1$ and $M_{k,i} = 2^{i} M_{k,0}$.				\label{algo:Mki}		
			\STATE Compute $(\tau^+,\gamma^+,x^+,y^+,z^+)=\texttt{AMG-QP}(\mu,M_{k,i},\gamma_k,x_k,z_k)$.
			\label{algo:sub-uapd-2}
			\ENDWHILE
			\label{algo:line-search-end}		
			\STATE Set $i_k = i,\,\tau_k =  \tau^+,\,M_{k+1} = M_{k,i_k}$.		\label{algo:Mk1}		
			\STATE Update $\displaystyle \gamma_{k+1} = \gamma^+,x_{k+1} =x^+,\,z_{k+1} =z^+$.	
			\IF[Restart]{either \cref{eq:SR} or \cref{eq:ResR} is satisfied}		
			\label{algo:res}		
			\STATE Reset $\gamma_{k+1} = \gamma_0,\,x_{k+1}=x_k,\,z_{k+1}=x_k$.
			\ENDIF	
			\ENDFOR
		\end{algorithmic}
	\end{algorithm}

		\section{Numerical Tests}
		\label{sec:num}
		\subsection{Setup}
		We compare \cref{algo:AMG-QP-line,algo:AMG-QP-line-res} with three multiobjective gradient methods:
		\begin{itemize}
			\item Steepest descent (SD) \cite{fliege_steepest_2000}.
			\item Accelerated gradient (AccG ) \cite{Sonntag2024}.
			\item Accelerated proximal gradient (APG) \cite{Tanabe2023a}.
		\end{itemize}
		According to \cite[Theorems 5.2 and 5.3]{tanabe_convergence_2023}, SD converges with the sublinear rate $L/k$ for convex case $\mu=0$ and the linear rate $(1-\mu/L)^k$ for strongly convex case $\mu>0$. Both APG and AccG work only for convex objectives with the same theoretical rate  $L/k^2$. To distinguish the strongly convex case and the convex case, we use AMG-QP($\mu=0$) and AMG-QP($\mu>0$) to denote \cref{algo:AMG-QP-line} for $\mu=0$ and $\mu>0$ respectively. Similarly, AMG-QP($\mu=0$)-SR and AMG-QP($\mu=0$)-ResR stand for \cref{algo:AMG-QP-line-res} with the speed restart \cref{eq:SR} and the residual restart \cref{eq:ResR}.
		
		As mentioned in \cite[Section 7.2]{Sonntag2024} and \cite[Remark 4.1]{Tanabe2023a}, we can apply the backtracking technique to AccG and APG for estimating the local Lipschitz constant. Note also that all the methods (including ours) involve the similar subproblem
		\[
		x^{k+1} =	{}  \mathop{\argmin}\limits_{z\in\R^n} \max _{1\leq j\leq m}\!\left\{ \left\langle\nabla f_j(y_k), z-y_k\right\rangle-p_j^k +\frac{1}{2\tau}\|z-y_k\|^2\right\},
		\]
		where $\tau>0,\,y_k\in\R^n$ and $P^k = \left[p^k_1,\cdots,p_m^k\right]^\top\in\R^m$ are given.
		The solution is given by $x_{k+1} =y_k-\tau DF(y_k)\lambda_k^*$, where $\lambda_k^*$ can be obtained from a Quadratic Programming 
		\[
		\lambda_k^*\in\mathop{\argmin}\limits_{\lambda\in\Delta_m}\left\{\langle \lambda, P^k\rangle+\frac\tau{2}\nm{DF(y_k)\lambda}^2\right\}.      
		\]
		For all algorithms, we use the same setting (unless other specified):
		\begin{itemize}
			\item Problem dimension $n = 100$  
			\item Number of sampling points $N = 100$	
			\item Initial backtracking parameter $M_0= 10$ 				
			\item Initial points are uniformly sampled from $[-2,2]^n$ 
		\end{itemize}  
		\subsection{Computational results}
		\label{sec:num-res}
		\noindent{\bf Example 1.} Consider the log-sum-exp objectives:
		\begin{equation}
			\label{eq:ex1}
			f_j(x) =\frac{\delta}{2}\nm{x}^2+\ln \sum_{i=1}^p\exp\left( \langle a_i^j,x\rangle-b_i^j\right) ,\quad j = 1,2,3,
		\end{equation}
		where $\delta\geq0,\, a_i^j\in\R^{n}$ and $b^j_i \in\R$ for $1\leq i\leq p$. We take $p=100,\delta = 0.05$ and generate $a_i^j$ and $b_i^j$ uniformly from $[-1,1]$. In \cref{fig:test_LSE_PFk}, we show the approximate Pareto front at the iteration $k=25$. In \cref{fig:test-lse-resk-iter,fig:test-lse-resk-time}, we report the convergence behavior of the KKT residual $\|\proj_{C(x_k)}(0)\|$ and the iterate gap $\nm{x_{k+1}-x_k}$ with respect to the iteration step and the iteration time (in second). It can be observed that: (i) our AMG-QP($\mu=0$) performs very similarly to APG and AccG; (ii) utilizing the strong convexity parameter $\mu=\delta$ does improve the performance much better than the rest methods; (iii) the adaptive restart technique works very well and our residual restart \cref{eq:ResR} outperforms the speed restart \cref{eq:SR}. 
		\begin{figure}[H]
			\centering
				\includegraphics[width=0.5\textwidth]{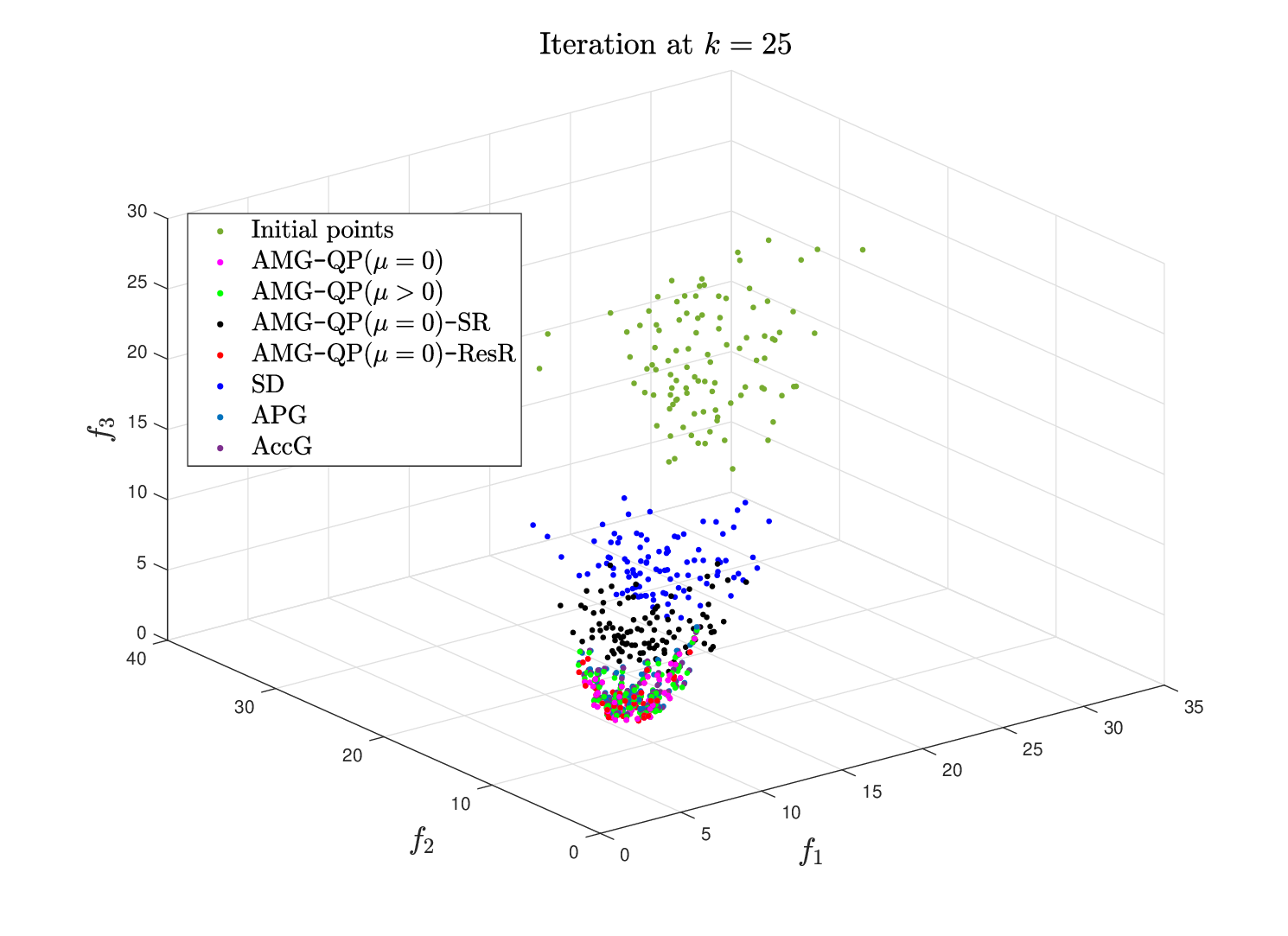}
			\caption{The approximate Pareto front at the iteration $k = 25$, with $N=100$ initial sample points in $[-2,2]^n$ for the first example \cref{eq:ex1}.}
			\label{fig:test_LSE_PFk}
		\end{figure}
		\begin{figure}[H]
			\centering
			\includegraphics[width=0.7\textwidth]{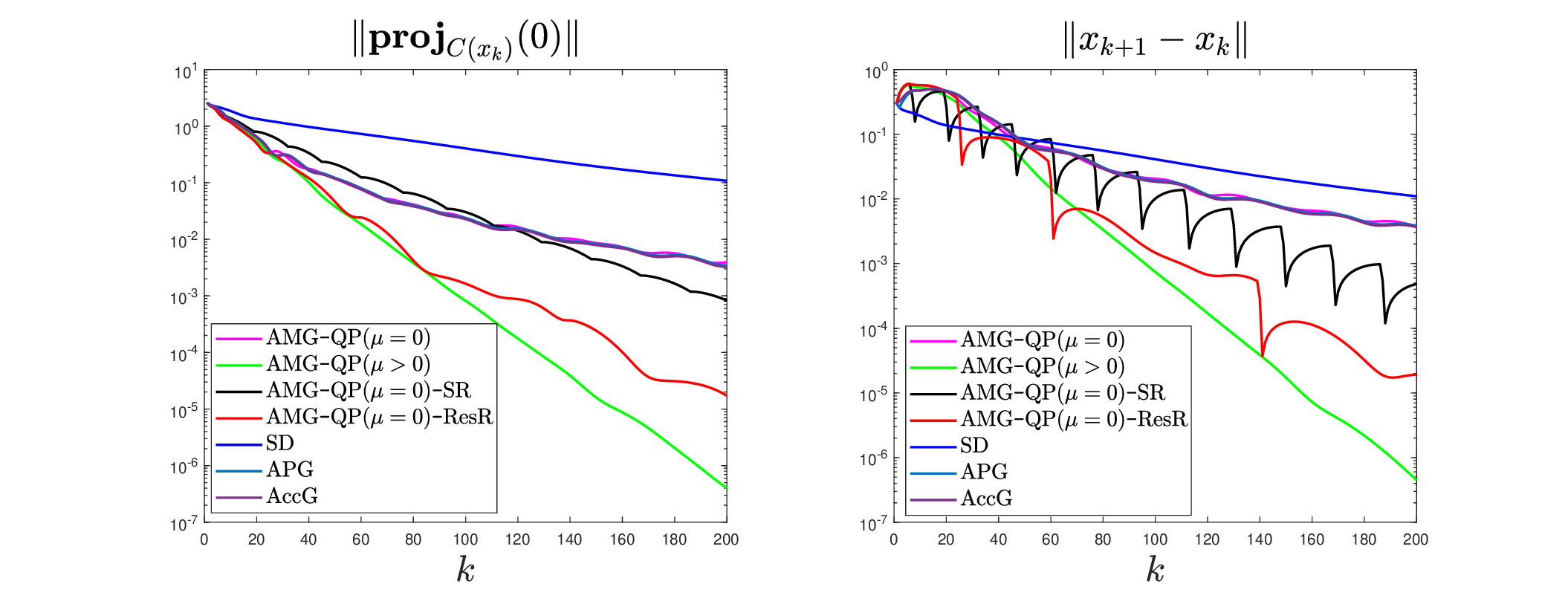}
			\caption{The KKT residual $\|\proj_{C(x_k)}(0)\|$ and the iterate gap $\nm{x_{k+1}-x_k}$ $v.s.$ the iteration step $k$, with one initial sample point $x_0\in[-2,2]^n$ for the first example \cref{eq:ex1}.}
			\label{fig:test-lse-resk-iter}
		\end{figure} 
		\begin{figure}[H]
			\centering 
			\includegraphics[width=0.7\textwidth]{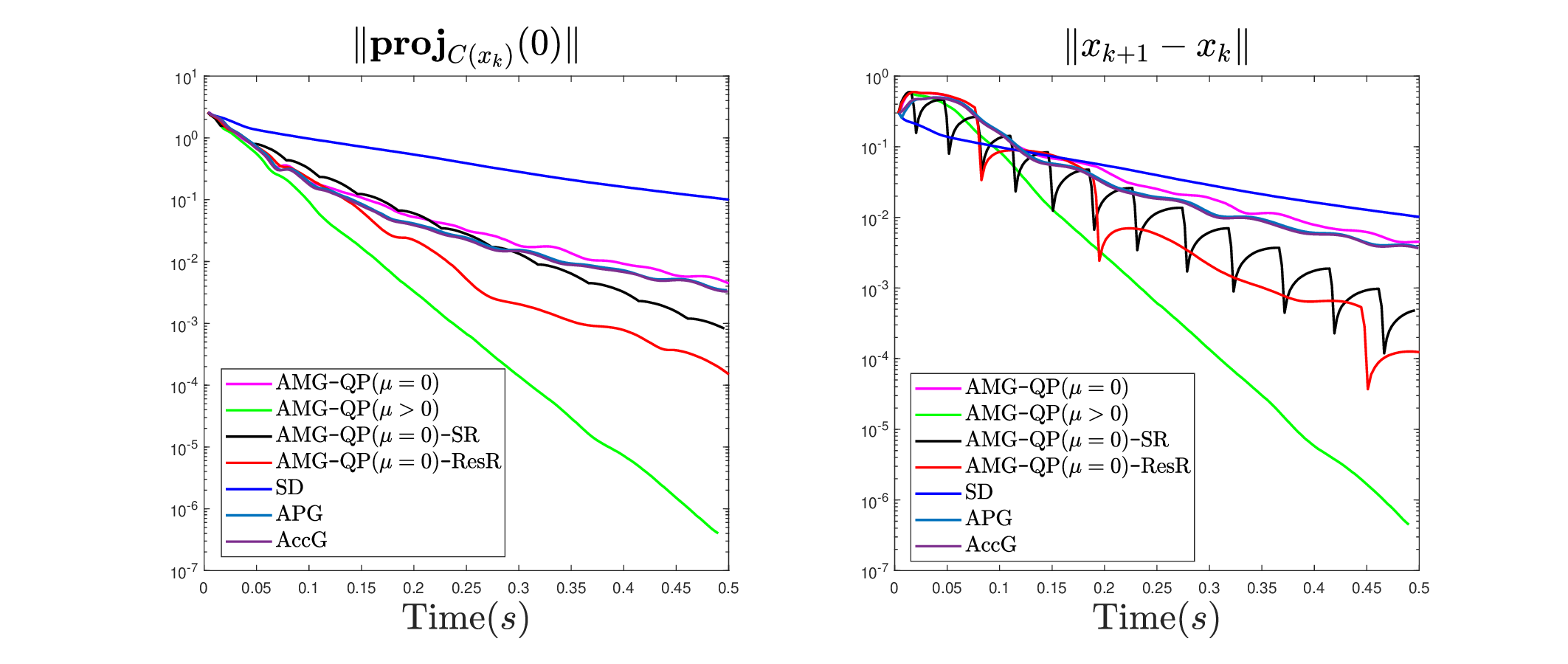} 
			\caption{The KKT residual $\|\proj_{C(x_k)}(0)\|$ and the iterate gap $\nm{x_{k+1}-x_k}$ $v.s.$ the iteration time $s$, with one initial sample point $x_0\in[-2,2]^n$ for the first example \cref{eq:ex1}.}
			\label{fig:test-lse-resk-time}
		\end{figure}
		\noindent{\bf Example 2.} Then let us consider the second example
		\begin{equation}
			\label{eq:ex2}
			f_j(x) = \frac{\delta}{2}\nm{x}^2+\frac{1}{2}\nm{A^jx-b^j}^2,\quad j = 1,2,
		\end{equation}
		where $\delta\geq0,\,A^j  \in\R^{n\times p}$ and $b^j \in\R^p$. Again, we take $p=100,\delta = 0.05$ and generate $A^j$ and $b^j$ uniformly from $[0,1]$. From \cref{fig:test-lsr-resk-iter,fig:test-lsr-resk-time}, we observe that and our residual restart \cref{eq:ResR} beats the speed restart \cref{eq:SR} and is even competitive with AMG-QP($\mu>0$). The rest three accelerated methods: AMG-QP($\mu=0$), APG and AccG, outperform SD but imply high oscillation phenomena for both KKT residual and iterate gap.
		\begin{figure}[H]
			\centering 
			\includegraphics[width=0.7\textwidth]{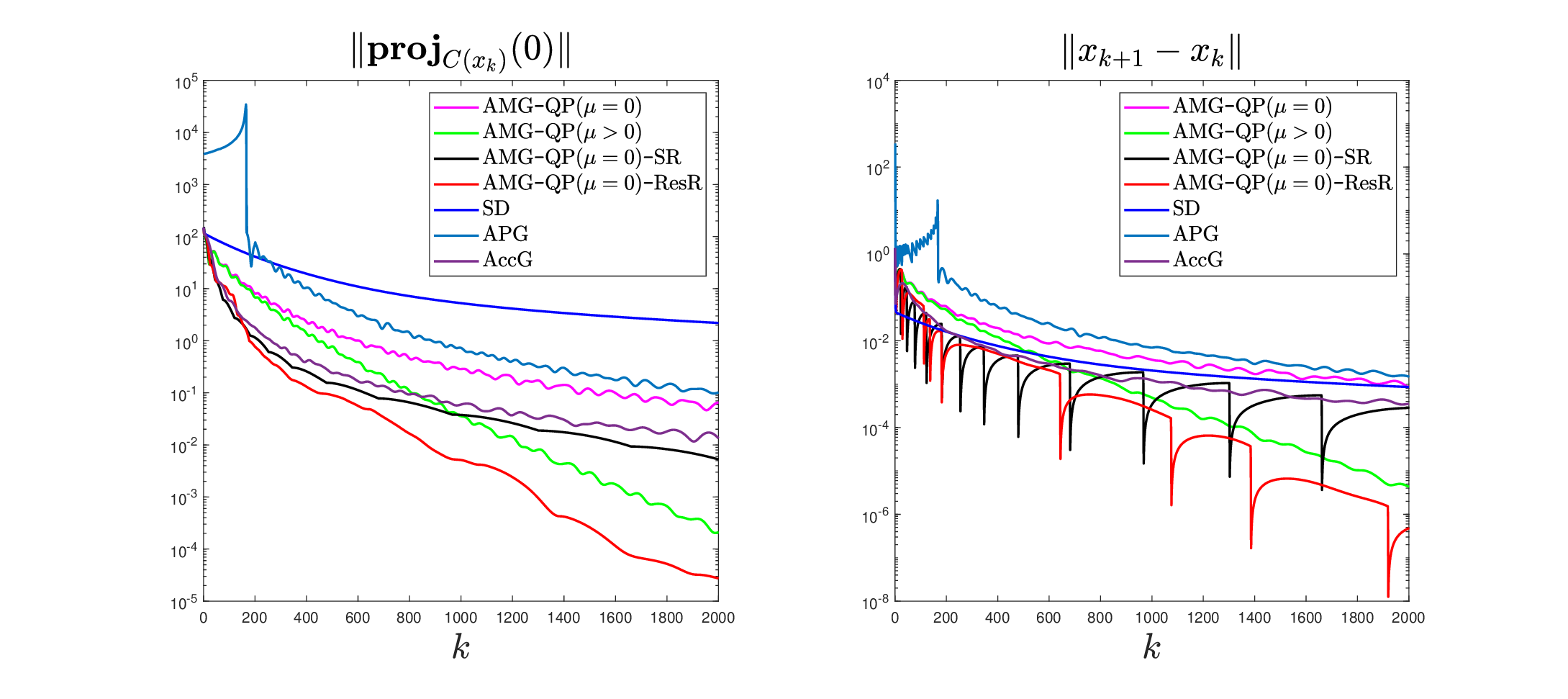} 
			\caption{The KKT residual $\|\proj_{C(x_k)}(0)\|$ and the iterate gap $\nm{x_{k+1}-x_k}$ $v.s.$ the iteration step $k$, with one initial sample point $x_0\in[-2,2]^n$ for the second example \cref{eq:ex2}.}
			\label{fig:test-lsr-resk-iter}
		\end{figure} 
		\begin{figure}[H]
			\centering 
			\includegraphics[width=0.7\textwidth]{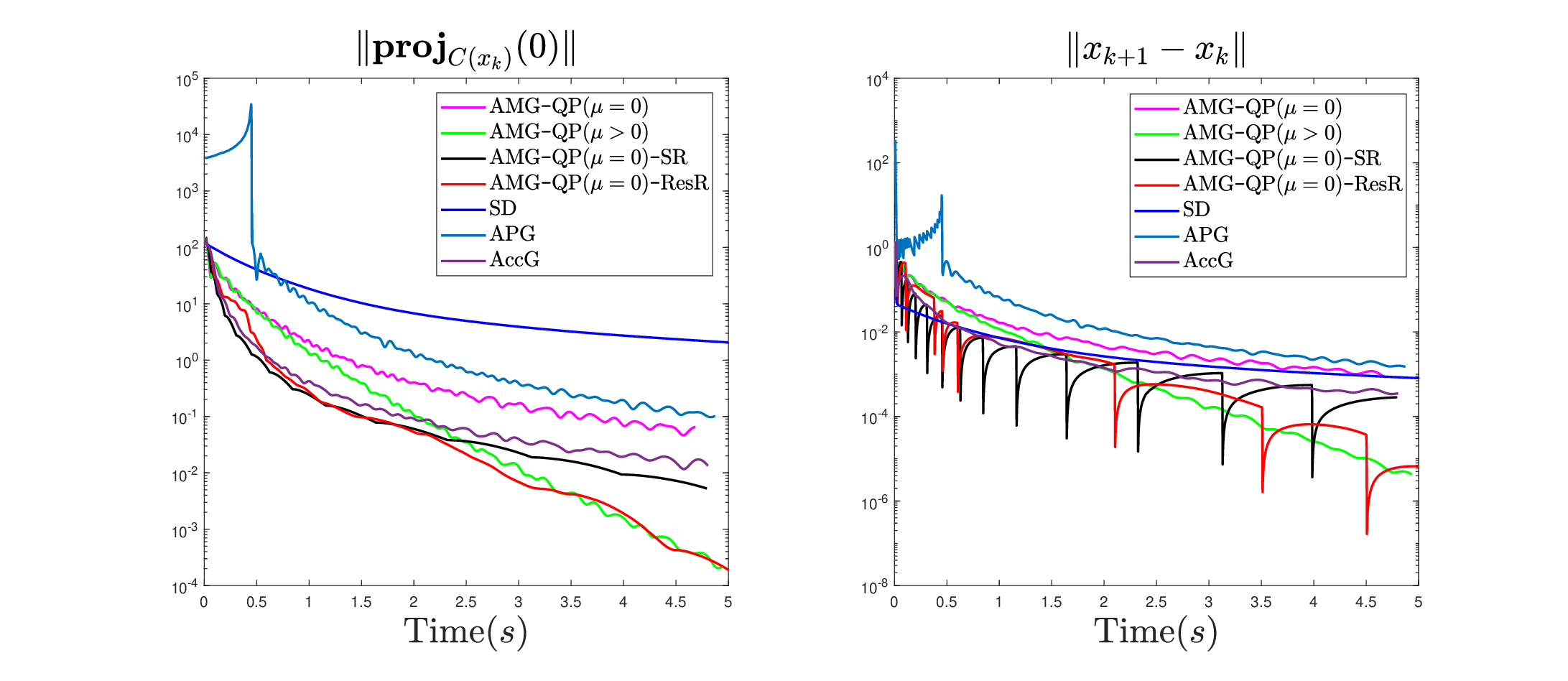} 
			\caption{The KKT residual $\|\proj_{C(x_k)}(0)\|$ and the iterate gap $\nm{x_{k+1}-x_k}$ $v.s.$ the iteration time $s$, with one initial sample point $x_0\in[-2,2]^n$ for the second example \cref{eq:ex2}.}
			\label{fig:test-lsr-resk-time}
		\end{figure}
		\noindent{\bf Example 3.} Lastly, we consider a nonconvex example \cite{Sonntag2024}
		\begin{equation}
			\label{eq:ex3}
			\begin{aligned}
				f_1(x) ={}& \frac{1}{2}\left(\sqrt{1+\snm{a^\top_1x}^2}+\sqrt{1+\snm{a^\top_2x}^2}+a^\top_2x\right)+\exp\left(-\snm{a^\top_2x}^2\right),\\
				f_2(x) ={}& \frac{1}{2}\left(\sqrt{1+\snm{a^\top_1x}^2}+\sqrt{1+\snm{a^\top_2x}^2}-a^\top_2x\right)+\exp\left(-\snm{a^\top_2x}^2\right),
			\end{aligned}
		\end{equation}
		where $a_1$ and $a_2$ are generated uniformly from $[0,1]^n$. In view of \cref{fig:test_NC_Resk_iter,fig:test_NC_Resk_time}, we observe fast convergence of our \cref{eq:ResR} scheme. For this problem, we also plot the approximate Pareto front of all methods in \cref{fig:test_NC_PF}. 
		
		As a summary, we conclude that our AMG-QP($\mu=0$) is very close to APG and AccG for solving convex problems. For strongly convex problems, AMG-QP($\mu>0$) converges with fast linear convergence. Moreover, applying our residual restart \cref{eq:ResR} to AMG-QP($\mu=0$) improves the practical performance significantly.
		\begin{figure}[H]
			\centering 
			\includegraphics[width=0.78\textwidth]{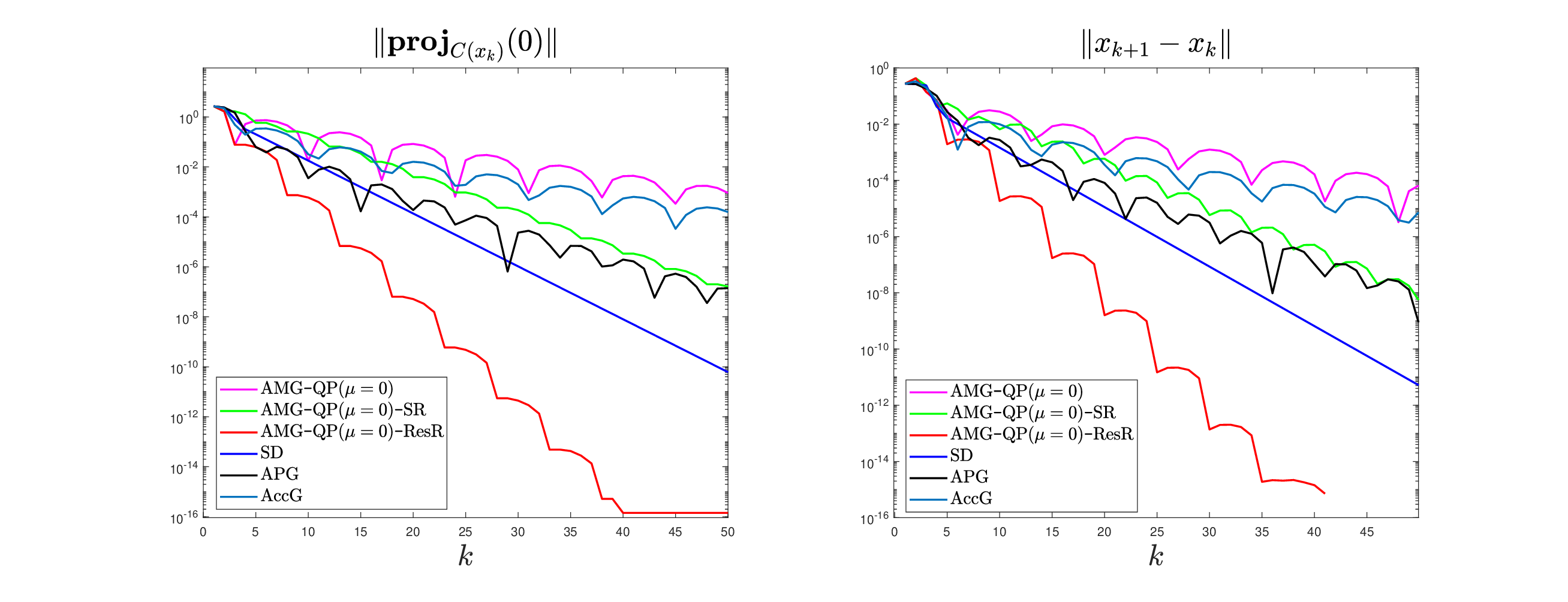} 
			\caption{The KKT residual $\|\proj_{C(x_k)}(0)\|$ and the iterate gap $\nm{x_{k+1}-x_k}$ $v.s.$ the iteration step $k$, with one initial sample point $x_0\in[-2,2]^n$ for the third example \cref{eq:ex3}.}
			\label{fig:test_NC_Resk_iter}
		\end{figure} 
		\begin{figure}[H]
			\centering 
			\includegraphics[width=0.78\textwidth]{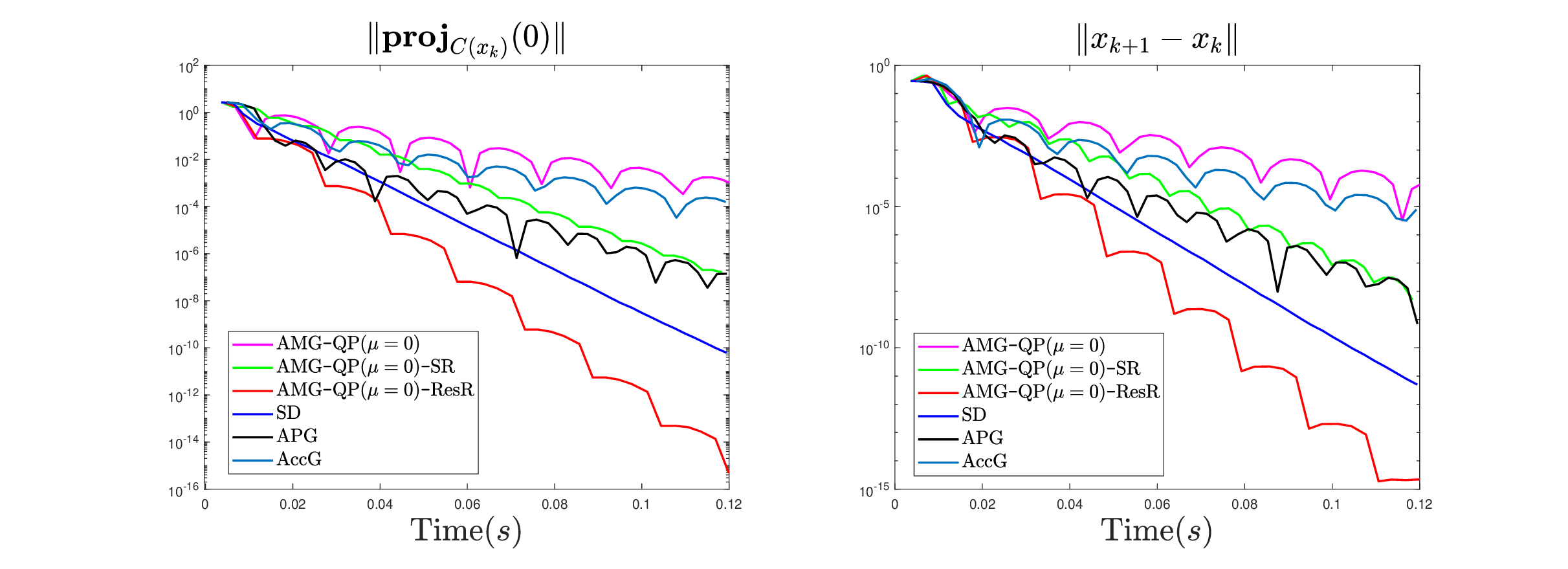} 
			\caption{The KKT residual $\|\proj_{C(x_k)}(0)\|$ and the iterate gap $\nm{x_{k+1}-x_k}$ $v.s.$ the iteration time $s$, with one initial sample point $x_0\in[-2,2]^n$ for the third example \cref{eq:ex3}.}
			\label{fig:test_NC_Resk_time}
		\end{figure}

		\begin{figure}[H]
			\centering 
			\includegraphics[width=0.66\textwidth]{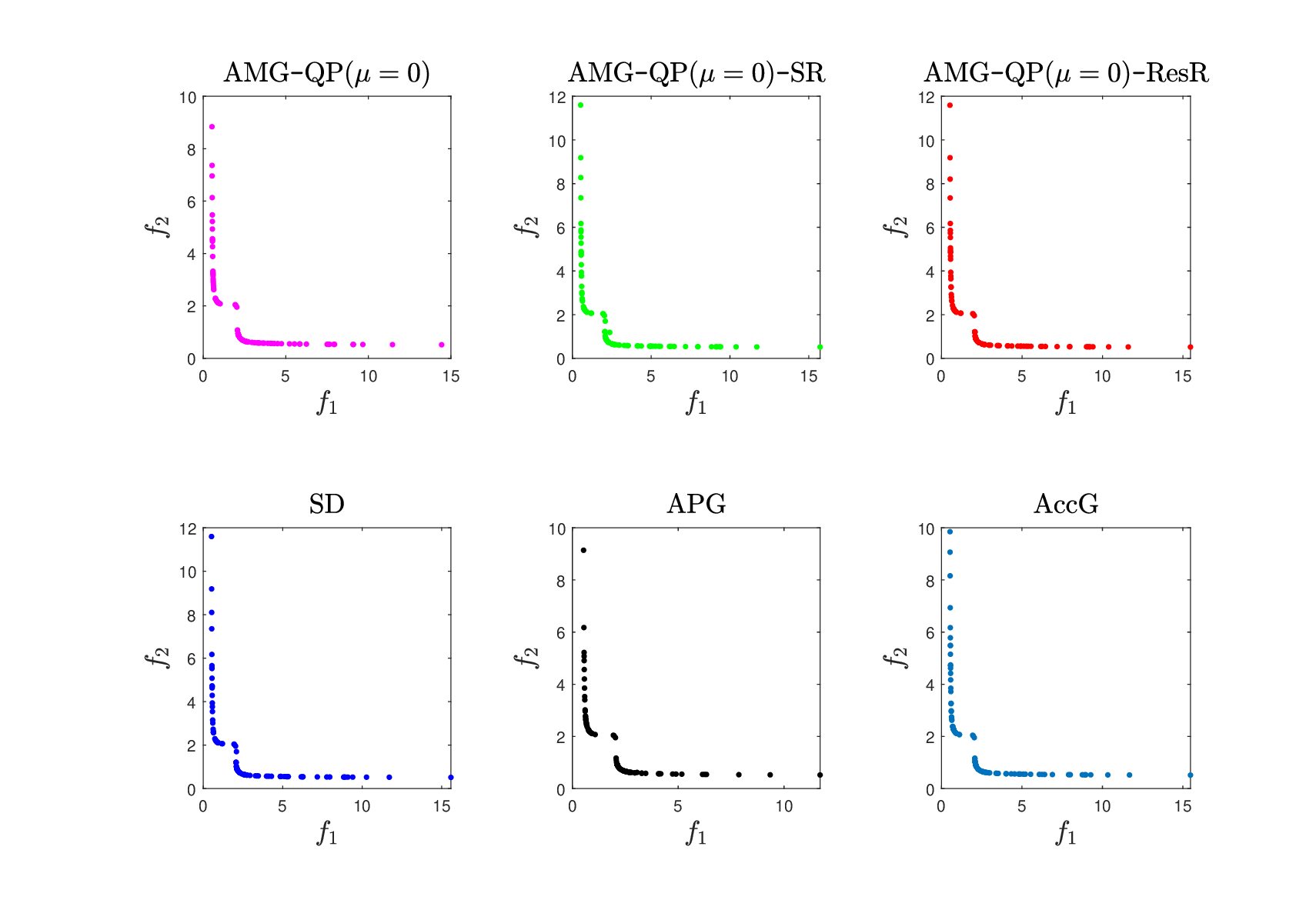} 
			\caption{The approximate Pareto front with $N=100$ initial sample points in $[-2,2]^n$ for the third example \cref{eq:ex3}.}
			\label{fig:test_NC_PF}
		\end{figure}

		\section{Conclusion}
		\label{sec:conclu}
		In this work, we present an accelerated gradient flow approach for convex multiobjective programming. The continuous time limit ODE of the multiobjective accelerated proximal gradient method \cite[Algorthm 2]{Tanabe2023a} is derived rigorously. To handle the convex case and the strongly convex case, we propose a novel accelerated multiobjective gradient (AMG) flow with adaptive time rescaling parameter and establish the exponential rate of a merit function. Then an accelerated multiobjective gradient method with fast rate $\min\{L/k^2,(1-\sqrt{\mu/L})^k\}$ is obtained from a proper implicit-explicit discretization of our continuous AMG flow. Moreover, we provide an effective adaptive residual restart technique to improve the practical performance of the proposed method. For future topics, we will focus on the nonsmooth (composite) case and the convergence analysis of the adaptive residual restart.
		
		\vskip.5cm 
		\noindent{\bf Acknowledgment} The authors would like to thank Dr. Jie Zhang (zjieabc@163.com) for her careful readings on an early version of this manuscript.

		\begin{appendices}
			\appendix
			\section{Note on a Subproblem}
			\label{sec:note-sub-prob}
			Let $C=\conv{p_1,\cdots,p_m}$ be given with $p_1,\cdots,p_m\in\R^n$. For $a>0,\,b\in[0,a]$ and $u,w\in\R^n$, define $x^\#\in\R^n$ implicitly by the nonlinear equation
			\begin{equation}\label{eq:non-eq-x}
				ax^\# = u-\proj_{C}(w-bx^\#).
			\end{equation}
			Let $P=[p_1,\cdots,p_m]$ and assume there exists some $\lambda^\#\in\Delta_m$ such that 
			\begin{equation}\label{eq:dual-rela}
				\proj_{C}(w-bx^\#) = \sum_{j=1}^{m}\lambda_j^\#p_j=P\lambda^\#.
			\end{equation}
			We then obtain from \cref{eq:non-eq-x,eq:dual-rela} that $	x^\#=
			(u-P\lambda^\#)/a$. According to the optimality condition of the projection in \cref{eq:non-eq-x}, we have
			\[
			\dual{P\lambda-P\lambda^\#,w-bx^\#-u+ax^\#}\leq 0\quad\forall\,\lambda\in\Delta_m,
			\]
			which gives 
			\[
			\dual{
				P^\top
				\left[(a-b)P\lambda^\#-aw+bu\right],\lambda-\lambda^\#}\geq 0\quad\forall\,\lambda\in\Delta_m.
			\]
			This actually corresponds to the optimality condition of the constrained optimization problem
			\[
			\lambda^\#\in\argmin_{\lambda\in\Delta_m}\,\Phi(\lambda),\quad\text{where }\Phi(\lambda) = \left\{
			\begin{aligned}
				{}&	\frac{1}{2}\nm{P\lambda-\frac{aw-bu}{a-b}}^2,&&\text{if}\,a>b,\\
				{}&\dual{P\lambda,u-w},&&\text{if}\,a=b.
			\end{aligned}
			\right.
			\]
			
			Consequently, the solution $x^\#$ to \cref{eq:non-eq-x} is characterized as below.
			\begin{thm}\label{thm:sub-prob-case1}
				Let $x^\#\in\R^n$ satisfy \cref{eq:non-eq-x}, then we have $x^\#=(u-v^\#)/a$ where
				\[
				\left\{
				\begin{aligned}
					{}&		v^\# = \proj_{C}\left(\frac{aw-bu}{a-b} \right),&&\text{if}\,a>b,\\
					{}&		v^\# \in\mathop{\argmin}\limits_{v\in C}\dual{v,u-w},&&\text{if}\,a=b.
				\end{aligned}
				\right.
				\]
			\end{thm}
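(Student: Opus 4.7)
The plan is to introduce the shorthand $v^\# := \proj_{C}(w - bx^\#)$, which by \eqref{eq:non-eq-x} satisfies $ax^\# = u - v^\#$ and hence $x^\# = (u - v^\#)/a$. The formula for $x^\#$ claimed by the theorem is then automatic once the announced characterization of $v^\#$ is established, so the entire task reduces to showing that $v^\#$ minimizes the stated functional over $C$.

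First I would write down the standard variational characterization of the Euclidean projection: $v^\# \in C$ and $\dual{v - v^\#,\,(w - bx^\#) - v^\#} \le 0$ for every $v \in C$. Substituting the identity $x^\# = (u - v^\#)/a$ into the inner-product argument gives
\[
w - bx^\# - v^\# \;=\; \frac{aw - bu - (a-b)\,v^\#}{a},
\]
so after multiplying by $a > 0$ and reversing the sign, the inequality becomes
\[
\dual{v - v^\#,\,(a-b)\,v^\# - (aw - bu)} \;\ge\; 0 \qquad \forall\, v \in C.
\]
This single variational inequality encodes both cases of the conclusion.

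When $a > b$, I would divide by the positive scalar $a - b$ to obtain $\dual{v - v^\#,\,v^\# - (aw - bu)/(a-b)} \ge 0$ for all $v \in C$, which is exactly the variational characterization of $v^\# = \proj_{C}\bigl((aw - bu)/(a - b)\bigr)$. When $a = b$, the term $(a - b)\,v^\#$ vanishes, and using $b = a > 0$ I would divide by $a$ to reduce the inequality to $\dual{v - v^\#,\,u - w} \ge 0$ for all $v \in C$, which is the first-order optimality condition for $v^\#$ to minimize the linear functional $v \mapsto \dual{v,\,u - w}$ over $C$. Existence of such a minimizer follows since $C = \conv{p_1,\ldots,p_m}$ is nonempty and compact.

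There is no substantive obstacle: the argument is a direct algebraic manipulation of the projection's variational inequality. The only mild subtlety is that as $a \downarrow b$ the quadratic projection problem degenerates to a linear program, which is why the two regimes must be recorded separately in the statement; the unified variational inequality above makes this degeneration transparent.
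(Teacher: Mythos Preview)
Your proof is correct and follows essentially the same route as the paper: write the variational characterization of the projection defining $v^\#$, substitute $x^\# = (u-v^\#)/a$, and recognize the resulting inequality as the optimality condition for the claimed projection (when $a>b$) or linear program (when $a=b$). The only cosmetic difference is that the paper parametrizes $C$ via the dual simplex variable $\lambda\in\Delta_m$ (writing $v = P\lambda$ with $P=[p_1,\ldots,p_m]$) and carries out the same manipulation in those coordinates, which makes the link to the QP subproblems used in the algorithms explicit; your version works directly with $v\in C$ and is marginally cleaner for the bare statement.
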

			\begin{rem}
				Observing the fact that $	x+ 	\proj_{C}(-x) = \proj_{x+C}(0)$ for all $x\in\R^n$,
				the nonlinear equation \cref{eq:non-eq-x} is also equivalent to
				\[
				(a-b)x^\# = u - \proj_{C-w+bx^\#}(0).
				\]
				Starting from this formulation, the solution $x^\#$ under the two cases $a=b$ and $a>b$ has also been given in \cite[Lemmas A.1 and A.2]{Sonntag2024}. 
			\end{rem}	
		\end{appendices}
		
		\bibliographystyle{abbrv}

	\end{document}